\newcommand{\Mt}{\ ^{t}\! M}
\newcommand{\PGL}{\mathrm{PGL}}
\newcommand{\im}{{\bf i}}
\newcommand{\Bir}{\mathrm{Bir}}
\newcommand{\Aut}{\mathrm{Aut}}
\newcommand{\C}{\mathbb{C}}
\newcommand{\p}{\mathbb{P}}
\newcommand{\z}{\mathbb{Z}}
\newcommand{\GL}{\mathrm{GL}}
\newcommand{\Pic}[1]{\mathrm{Pic}(#1)}
\theoremstyle{plain}
\newtheorem{thm}{Theorem}[section]
\newtheorem{pro}[thm]{Proposition}
\newtheorem{lem}[thm]{Lemma}
\newtheorem{cor}[thm]{Corollary}
\newtheorem{theoalph}{Theorem}
\theoremstyle{definition}
\newtheorem{eg}[thm]{Example}
\newtheorem{rem}[thm]{Remark}
\def\og{\leavevmode\raise.3ex\hbox{$\scriptscriptstyle\langle\!\langle$~}}
\def\fg{\leavevmode\raise.3ex\hbox{~$\!\scriptscriptstyle\,\rangle\!\rangle$}}
\numberwithin{equation}{section}       
\renewcommand{\b}{\mathfrak{b}}
\title{Degree growth of birational maps of the plane}
\thanks{Both authors were supported by the Swiss National Science Foundation grant no PP00P2\_128422 /1.}
\author{J\'er\'emy Blanc}
\address{Universit\"{a}t Basel, Mathematisches Institut, Rheinsprung $21$, CH-$4051$ Basel, Switzerland.}
\email{jeremy.blanc@unibas.ch}
\author{Julie D\'eserti}
\address{Universit\"{a}t Basel, Mathematisches Institut, Rheinsprung $21$, CH-$4051$ Basel, Switzerland.}
\email{julie.deserti@unibas.ch}
\address{On leave from Institut de Math\'ematiques de Jussieu, Universit\'e Paris $7$, Projet G\'eom\'etrie et 
Dynamique, Site Chevaleret, Case $7012$, $75205$ Paris Cedex 13, France.}
\email{deserti@math.jussieu.fr}
\begin{document}
\selectlanguage{english}

\maketitle\begin{center}{\today}\end{center}

\begin{abstract}
This article studies the sequence of iterative degrees of a birational map of the plane. This sequence is known 
either to be bounded or to have a linear, quadratic or exponential growth.

The classification elements of infinite order with a bounded sequence of degrees is achieved, the case of elements 
of finite order being already known. The coefficients of the linear and quadratic growth are then described, and 
related to geometrical properties of the map. The dynamical number of base-points is also studied. 

Applications of our results are the description of embeddings of the Baumslag-Solitar groups and $\mathrm{GL}(2,
\mathbb{Q})$ into the Cremona group.

\noindent{\it 2010 Mathematics Subject Classification. --- 14E07, 37F10, 32H50  }
\end{abstract}

\tableofcontents

\section{Introduction}
A \emph{rational map} of the complex projective plane $\mathbb{P}^2=\mathbb{P}^2_{\C}$ into itself is a map of the following type
\begin{align*}
&\phi\colon\mathbb{P}^2\dashrightarrow\mathbb{P}^2, &&
(x:y:z)\dashrightarrow\big(\phi_0(x,y,z):\phi_1(x,y,z):\phi_2(x,y,z)\big),
\end{align*}
where the $\phi_i$'s are homogeneous polynomials of the same degree without common factor. The \emph{degree} $\deg \phi$ of~$\phi$ is by definition the degree of these polynomials. We will only consider \emph{birational} maps, which are rational maps having an inverse, and denote by $\Bir(\p^2)$ the group of such maps, classically called \emph{Cremona group}.

We are interested in the behaviour of the sequence $\left\{\deg \phi^k\right\}_{k\in \mathbb{N}}$. According to \cite{DiFa}, the sequence is either bounded or has a linear, quadratic or exponential growth. We will say that $\phi$ is

\begin{enumerate}
\item \emph{elliptic} if the growth is bounded;
\item a \emph{Jonqui\`eres twist} if the growth is linear;
\item an \emph{Halphen twist} if the growth is quadratic;
\item \emph{hyperbolic}  if the growth is exponential.
\end{enumerate}
This terminology is classical, and consistent with the natural action of $\Bir(\p^2)$ on an hyperbolic space of infinite dimension, where Jonqui\`eres and Halphen twists are parabolic (\cite[Theorem 3.6]{Ca}).\\

Recall that the first dynamical degree of  $\phi\in \Bir(\p^2)$ is  $\lambda(\phi)=\displaystyle\lim\limits_{k\to +\infty}(\deg \phi^k)^{1/k}\in \mathbb{R}$. This is an invariant of conjugation which allows to distinguish the first three cases (where $\lambda(\phi)=1$) from the last case (where $\lambda(\phi)>1$). There are plenty of articles on hyperbolic elements and the possible values for the algebraic integer $\lambda(\phi)$, we are here more interested in the growth of the first three cases.

 The nature of the growth is invariant under conjugation, and induces geometric properties on $\phi$, that we describe now. \\
 
 An element $\phi\in \Bir(\p^2)$ is elliptic if and only if it is conjugate to an automorphism $g\in \Aut(\mathrm{S})$ of a smooth projective rational surface $\mathrm{S}$ such that $g^n$ belongs to the connected component $\Aut^0(\mathrm{S})$ of $\Aut(\mathrm{S})$ for some $n>0$ (\emph{see} \cite[Theorem 0.2, Lemma 4.1]{DiFa}). Reading this description, one would expect to find examples where $g$ has infinite order and does not belong to the connected component. We will remove this possibility and refine the result of \cite{DiFa} in Section~\ref{Sec:DegBounded}, by showing that, up to conjugation, either $g$ has finite order or $\mathrm{S}=\p^2$. The complete classification of elements of finite order of $\Bir(\p^2)$ can be found in \cite{BlaC}; for elements of infinite order, one has (Proposition~\ref{Prop:ellipticinfiniteorder}):
 
\begin{theoalph}
If $\phi\in \Bir(\p^2)$ is elliptic of infinite order, then $\phi$ is conjugate to an automorphism of $\p^2$, which restricts to one of the following automorphisms on some open subset isomorphic to $\C^2$:
\begin{enumerate}
\item $(x,y)\mapsto (\alpha x,\beta y)$, where  $\alpha$, $\beta\in \C^{*}$, and where the kernel of the group homomorphism $\mathbb{Z}^2 \to \C^{*}$ given by $(i,j)\mapsto \alpha^i \beta^j$ is generated by $(k,0)$  for some $k\in \mathbb{Z}$;
 
\item $(x,y)\mapsto (\alpha x, y+1)$, where $\alpha\in \C^{*}$.
\end{enumerate}
\end{theoalph}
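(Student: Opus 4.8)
The plan is to start from the structural result recalled in the introduction: by \cite{DiFa}, $\phi$ is conjugate in $\Bir(\p^2)$ to an automorphism $g$ of a smooth projective rational surface $S$ with $g^n\in\Aut^0(S)$ for some $n>0$. Since $\Aut^0(S)$ is connected and $\Pic{S}=\mathrm{NS}(S)$ is a finitely generated free abelian group, $\Aut^0(S)$ acts trivially on $\Pic{S}$, so $g_*$ has finite order on $\Pic{S}$; consequently $g$ permutes the $(-1)$-curves of $S$ — each being the unique effective divisor in its class — through a finite-order permutation. Contracting, step by step, $g$-invariant sets of pairwise disjoint $(-1)$-curves, i.e.\ running a $g$-equivariant minimal model program, one reduces, after replacing $(S,g)$ by a birationally conjugate pair, to the case where $S$ is minimal rational: $S=\p^2$ or $S=\mathbb{F}_d$ with $d\neq1$. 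The only delicate point in this step is that curves in a single $g$-orbit may intersect; since we only need conjugacy in $\Bir(\p^2)$, this is handled by the usual techniques.

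Next I treat $S=\p^2$, which is the core case. Here $g\in\Aut(\p^2)=\PGL_3(\C)$ has infinite order, so by Jordan normal form its conjugacy class is one of: diagonalizable; one $2\times2$ and one $1\times1$ Jordan block; or a single $3\times3$ Jordan block. In affine coordinates on a chart $\C^2\subset\p^2$ a representative of each is, respectively, $(x,y)\mapsto(\alpha x,\beta y)$; $(x,y)\mapsto(ax+y,ay)$ with $a\in\C^*$ (degenerating to $(x,y)\mapsto(x+y,y)$ when the eigenvalues coincide); and $(x,y)\mapsto(x+y,y+1)$. I then normalize these using conjugations that lie in $\Bir(\p^2)$ — the monomial maps $(x,y)\mapsto(x^ay^b,x^cy^d)$ with $\left(\begin{smallmatrix}a&b\\ c&d\end{smallmatrix}\right)\in\GL_2(\z)$, and shears or translations such as $(x,y)\mapsto(x/y,y)$ and $(x,y)\mapsto(x+q(y),y)$. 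In the diagonalizable case, $g$ has infinite order exactly when the kernel $K\subseteq\z^2$ of $(i,j)\mapsto\alpha^i\beta^j$ has rank $\le1$; a monomial conjugation replaces $K$ by its image under the corresponding element of $\GL_2(\z)$, and since every rank-$\le1$ subgroup of $\z^2$ is $\GL_2(\z)$-equivalent to $\z\cdot(k,0)$ (with $k=0$ or $k$ the gcd of the entries of a generator), one lands in family (1). For the two Jordan cases a direct check suffices: conjugating $(x,y)\mapsto(ax+y,ay)$ by $(x,y)\mapsto(x/y,y)$ gives $(x,y)\mapsto(x+1/a,ay)$, hence, rescaling and exchanging the coordinates, $(x,y)\mapsto(ax,y+1)$; and conjugating $(x,y)\mapsto(x+y,y+1)$ by a suitable quadratic shear in $x$ gives $(x,y)\mapsto(x,y+1)$. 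Thus every non-diagonalizable element falls into family (2), for any value of $\alpha\in\C^*$, all of these being of infinite order.

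It remains to treat $S=\mathbb{F}_d$ and reduce it to the previous case. For $d\ge1$, $g$ preserves the ruling $\pi\colon\mathbb{F}_d\to\p^1$ and the negative section $E$, and induces $\bar g\in\PGL_2$ on the base. Picking a fixed point $q$ of $\bar g$ and a fixed point $p\notin E$ of $g$ on $\pi^{-1}(q)$, the elementary transformation centred at $p$ is $g$-equivariant and produces $\mathbb{F}_{d-1}$; iterating, one reaches $\mathbb{F}_1$ — blown down to $\p^2$ — or $\mathbb{F}_0=\p^1\times\p^1$. An infinite-order automorphism of $\p^1\times\p^1$ is, up to conjugacy, either a product of two M\"obius maps, which via the obvious birational identification with $\p^2$ becomes $(x,y)\mapsto(\alpha x,\beta y)$, $(x,y)\mapsto(\alpha x,y+1)$ or $(x,y)\mapsto(x+1,y+1)$ (the last conjugate to $(x,y)\mapsto(x,y+1)$), or of swap type $(a,b)\mapsto(g_2(b),a)$, which after conjugation has $g_2\colon x\mapsto\lambda x$ or $x\mapsto x+1$ and then extends to a linear map of $\p^2$, e.g.\ $[a:b:c]\mapsto[\lambda b:a:c]$. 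Either way we are back to the $\p^2$ analysis, so collecting the three cases, every elliptic $\phi$ of infinite order is conjugate to a linear automorphism of $\p^2$ restricting on a chart to a map of family (1) or (2), and a final elementary check on when such a map has infinite order yields exactly the stated conditions. I expect the main obstacle to sit precisely in this Hirzebruch step: the required $g$-fixed point $p$ off $E$ need not exist when $g$ acts on the chosen fibre as a parabolic automorphism fixing the point $E\cap\pi^{-1}(q)$, and these residual configurations — which live on $\mathbb{F}_2$ and are related to the quadric cone — must be conjugated to $\p^2$ by an ad hoc birational map.
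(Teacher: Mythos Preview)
Your overall architecture matches the paper's: reduce to an automorphism of a minimal rational surface via equivariant contraction, handle $\p^2$ by Jordan form plus monomial/shear conjugations, then reduce the Hirzebruch cases back to $\p^2$. The $\p^2$ analysis is essentially identical to the paper's. However, there are two places where you defer the actual work, and in both the paper carries out a genuine argument that your proposal does not supply.

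\medskip

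\textbf{The equivariant reduction.} Running a $g$-equivariant MMP does \emph{not} land you directly on $\p^2$ or $\mathbb{F}_d$; it lands you on a $g$-minimal pair, which a priori is either a del Pezzo surface with $\mathrm{rk}\,\Pic{S}^g=1$ or a conic bundle with $\mathrm{rk}\,\Pic{S}^g=2$. The concrete obstructions are the del Pezzo of degree $6$ (the six $(-1)$-curves form a hexagon; if $g$ rotates it by order $6$ there is no $g$-invariant set of pairwise disjoint $(-1)$-curves) and conic bundles where $g$ swaps the two components of each singular fibre. Your sentence ``since we only need conjugacy in $\Bir(\p^2)$, this is handled by the usual techniques'' does not resolve this: contracting non-equivariantly only conjugates $g$ to a birational self-map, not an automorphism. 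The paper's Proposition~\ref{Prp:P2Fn} eliminates these cases by hand --- showing that on the degree-$6$ del Pezzo any minimal automorphism has order $6$, and that a minimal conic bundle action with singular fibres forces $S$ to be that same degree-$6$ del Pezzo --- and this is precisely the content you are missing.

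\medskip

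\textbf{The Hirzebruch step.} You correctly identify the obstacle: on $\mathbb{F}_n$ with $n\ge 2$, the elementary-transformation descent needs a $g$-fixed point off the negative section, and this can fail. You then stop, saying these residual configurations ``must be conjugated to $\p^2$ by an ad hoc birational map''. The paper actually does this computation (Proposition~\ref{Prop:ellipticinfiniteorder}): writing $g$ on the affine chart as $(x,y)\mapsto(\alpha x,\beta y+Q(x))$ or $(x,y)\mapsto(x+1,\beta y+Q(x))$, the no-fixed-point condition forces $\beta=\alpha^n$ (resp.\ $\beta=1$) and $\deg Q=n$, and then explicit birational conjugations --- by $(x,y)\mapsto(x,y/P(x^k))$ in the first case, by iterated shears $(x,y)\mapsto(x,y+\gamma x^{n+1})$ in the second --- bring $g$ to $(\xi x,y+1)$ or $(x+1,y)$. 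This is not long, but it is the substance of the Hirzebruch case, and your proposal omits it.
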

 
The end of Section~\ref{Sec:DegBounded} is devoted to the description of the conjugacy classes of such maps (Proposition \ref{Prop:ConjDiagAlmostDiag}) and their centralizers in the Cremona group (Lemmas~\ref{Lem:centraldiag} and \ref{Lem:centtransl}).\\

A birational map $\phi\in \Bir(\p^2)$ has a finite number $\b(\phi)$ of base-points (that may belong to $\p^2$ or correspond to infinitely near points). We will call the number $$\mu(\phi)=\lim\limits_{k\to +\infty} \frac{\b(\phi^k)}{k},$$ the \emph{dynamical number of base-points of $\phi$}. In Section~\ref{Sec:Basepointgrowth}, we study the sequence $\{{\b(\phi^k)}\}_{k\in \mathbb{N}}$ and deduce some properties on the number $\mu(\phi)$; let us state some of them. It is a non-negative integer invariant under conjugation; it also allows us to give a characterisation of birational maps conjugate to an automorphism of a smooth projective rational surface (Proposition \ref{Pro:RecCorMuInvariant}):

\begin{theoalph}
Let  $\mathrm{S}$ be a smooth projective surface; the birational map $\phi\in\Bir(\mathrm{S})$ is conjugate to an automorphism of a smooth projective surface if and only if $\mu(\phi)=0$.
\end{theoalph}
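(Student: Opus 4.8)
The plan is to prove both implications. For one direction, suppose $\phi\in\Bir(\mathrm{S})$ is conjugate to an automorphism $\psi\in\Aut(\mathrm{S}')$ of a smooth projective surface $\mathrm{S}'$. Since conjugation preserves $\mu$, it suffices to show $\mu(\psi)=0$. But an automorphism of a smooth projective surface has no base-points, so $\b(\psi^k)=0$ for every $k\ge 1$, whence $\mu(\psi)=\lim_k \b(\psi^k)/k=0$. (One should note here that $\mu$ being an invariant of conjugation — including conjugation by birational maps between possibly different surfaces — must already be available; if only stated for $\Bir(\p^2)$ in the excerpt, I would first record the straightforward extension to arbitrary smooth projective surfaces, using that $\b(\psi\phi\psi^{-1})$ and $\b(\phi)$ differ by a bounded amount, the bound depending on $\psi$ but not on the iterate.)

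For the converse, assume $\mu(\phi)=0$; I want to produce a smooth projective surface on which a conjugate of $\phi$ is an automorphism. The key is that $\mu(\phi)=0$ forces the sequence $\b(\phi^k)$ to be bounded: if it were unbounded, sub-additivity-type estimates on base-points of composites (the number of base-points of $\phi^{k+\ell}$ is controlled by those of $\phi^k$ and $\phi^\ell$ in the relevant direction) would give a positive linear lower bound, contradicting $\mu(\phi)=0$. Once $\b(\phi^k)$ is bounded, the set of all base-points appearing (on $\mathrm{S}$ or infinitely near) among the $\phi^k$ and $\phi^{-k}$, after passing to a common resolution, is a finite set of points (in a suitable blow-up $\pi\colon X\to\mathrm{S}$). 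Then $\phi$ lifts to a birational self-map of $X$ whose base-points are all among this finite configuration; by blowing these up (and iterating the construction until it stabilises, which it does because the total number of relevant points is finite) one reaches a smooth projective surface $Y$ on which the induced map $\hat\phi$ has empty set of base-points for both $\hat\phi$ and $\hat\phi^{-1}$, i.e.\ $\hat\phi\in\Aut(Y)$. Since $\phi$ and $\hat\phi$ are conjugate via the birational morphism $Y\to\mathrm{S}$, this proves the claim.

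The main obstacle is making the "bounded $\b(\phi^k)$ implies regularisability" step precise: one must show that the base-points occurring in the whole orbit $\{\phi^k\}_{k\in\z}$, viewed on an appropriate model, really form a finite set and not merely a set that is finite on each iterate but grows with $k$. This is where the hypothesis $\mu=0$ (boundedness, not just sublinear growth) is used, together with a combinatorial analysis of how base-points of $\phi^{k}$ relate to those of $\phi$ and $\phi^{k-1}$ under composition: a base-point of $\phi^k$ is either a base-point of $\phi$, or maps under $\phi$ to a base-point of $\phi^{k-1}$, or is a point where $\phi^{k-1}$ contracts a curve passing through a base-point of $\phi$. Controlling the third phenomenon — the interplay between contracted curves and base-points along the orbit — and showing the resulting configuration is finite is the technical heart. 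I would handle it by choosing a model where $\phi$ and its inverse are "algebraically stable" away from a fixed finite set (in the sense that no iterate contracts a curve onto a base-point of a later iterate), which is possible precisely because the relevant count is bounded, and then a standard argument blows up the finitely many offending points to obtain the automorphism.
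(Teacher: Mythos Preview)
Your easy direction is fine and matches the paper (Corollary~\ref{Cor:MuInvariant}).

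For the hard direction there are two problems, one minor and one fatal.

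\emph{Minor.} Your justification that $\mu(\phi)=0$ forces $\{\b(\phi^k)\}$ to be bounded via ``sub-additivity would give a positive linear lower bound'' does not work: sub-additivity gives upper bounds, not lower bounds, and a sub-additive sequence with $a_k/k\to 0$ need not be bounded (take $a_k=\lfloor\sqrt{k}\rfloor$). The conclusion is nevertheless true, but it requires the finer analysis of persistent base-points in Proposition~\ref{Pro:MuPersists}, which shows $\{\b(\phi^k)-\mu(\phi)k\}$ is bounded.

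\emph{Fatal.} Even granting that $\b(\phi^k)$ is bounded, your central claim that the union over all $k$ of the base-points of $\phi^k$ is finite is false. Take any $\psi\in\Aut(Y)$ of infinite order on a smooth projective surface and any birational $\theta\colon S\dashrightarrow Y$; set $\phi=\theta^{-1}\psi\theta$. The base-points of $\phi^k$ on $S$ are the base-points of $\theta$ together with $\theta^{-1}\big(\psi^{-k}(B)\big)$, where $B$ is the (finite) set of base-points of $\theta^{-1}$. Each $\b(\phi^k)$ is bounded by $\b(\theta)+\b(\theta^{-1})$, yet the union $\bigcup_k \psi^{-k}(B)$ is typically infinite. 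So there is no surface obtained by blowing up ``all base-points of all iterates'', and the proposed regularisation procedure cannot even get started. You flag this as the main obstacle, but the suggested fix (pass to an algebraically stable model and blow up finitely many offending points) is not an argument: algebraic stability still allows base-points, and you give no mechanism by which $\mu=0$ removes them.

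The paper's route is genuinely different. It does \emph{not} try to blow up all base-points of all iterates. Instead it blows up only the set $K$ of points that are simultaneously base-points of some $\phi^i$ ($i>0$) and of some $\phi^{-j}$ ($j>0$); this set is always finite, independently of $\mu$, and the resulting map $\widehat{\phi}$ is algebraically stable. One then contracts, one by one, the $(-1)$-curves contracted by $\widehat{\phi}$, reaching $\widetilde{\phi}$ on $\widetilde{S}$ with no such curves. Only at this final stage is $\mu(\phi)=0$ used: if $\widetilde{\phi}$ were not an automorphism, a base-point $p$ of $\widetilde{\phi}^{-1}$ would, by absence of persistent base-points (Proposition~\ref{Pro:MuPersists}), fail to be a base-point of $\widetilde{\phi}^{-k}$ for some $k>1$; a careful comparison of two resolutions of $\widetilde{\phi}$ and $\widetilde{\phi}^{1-k}$ then produces a $(-1)$-curve that must be blown up again, contradicting its self-intersection. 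This contradiction argument is the actual content of the implication, and your proposal does not contain it.
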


In the case where $\phi$ is a Jonqui\`eres twist, the number $\mu(\phi)$ determines the degree growth of~$\phi$. A Jonqui\`eres twist preserves an unique pencil of rational curves \cite[Theorem 0.2]{DiFa}. The sequence $\left\{\deg \phi^k\right\}_{k\in \mathbb{N}}$ grows as $\alpha k$ for some constant $\alpha\in \mathbb{R}$. The number $\alpha$ is not invariant under conjugation, but one can show that the minimal value is attained when the rational curves of the pencil are lines, and is an integer divided by~$2$. More precisely, one has (Proposition~\ref{pro:MinimalJonq}):

\begin{theoalph}\label{ThmJon}
Let $\phi\in \Bir(\p^2)$ be a Jonqui\`eres twist. 
\begin{enumerate}
\item The set $$\left.\left\{\lim\limits_{k\to +\infty} \frac{\deg(\psi\phi^k\psi^{-1})}{k}\ \right\vert\ \psi\in \Bir(\p^2)\right\}$$ admits a minimum, which is equal to $\frac{\mu(\phi)}{2}\in \frac{1}{2}\mathbb{N}$.

\item There exists an integer $a\in \mathbb{N}$ such that $$\lim\limits_{k\to +\infty} \frac{\deg(\phi^k)}{k}=\frac{\mu(\phi)}{2}\cdot a^2.$$ Moreover, $a=1$ if and only if $\phi$ preserves a pencil of lines.
\end{enumerate}
\end{theoalph}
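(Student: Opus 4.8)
The plan is to reduce to the case of a de Jonqui\`eres map, handle that case directly via the base-point count, and then see how the degree growth scales when the invariant pencil is not a pencil of lines. By \cite[Theorem 0.2]{DiFa} the twist $\phi$ preserves a unique pencil $\Lambda$ of rational curves; let $a\ge 1$ be the degree in $\p^2$ of a general member of $\Lambda$, so $a=1$ exactly when $\Lambda$ is a pencil of lines. It is classical that any pencil of rational curves of $\p^2$ can be sent to the pencil of lines by an element of $\Bir(\p^2)$ (resolve $\Lambda$ to a $\p^1$-fibration on a rational surface and run the relative minimal model program down to $\mathbb{F}_1\to\p^1$); fixing such a $\psi_0$, the map $\theta:=\psi_0\phi\psi_0^{-1}$ lies in the de Jonqui\`eres group. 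Since the nature of the degree growth and the integer $\mu$ are invariant under conjugation, $\theta$ is again a Jonqui\`eres twist with $\mu(\theta)=\mu(\phi)$.

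Next I treat the de Jonqui\`eres case. Every power $\theta^k$ again lies in the de Jonqui\`eres group, and a de Jonqui\`eres map of degree $d\ge 2$ has exactly $2d-1$ base-points (one of multiplicity $d-1$, and $2d-2$ of multiplicity $1$). As $\theta$ is a twist, $\deg\theta^k\to\infty$, hence $\b(\theta^k)=2\deg\theta^k-1$ for all large $k$; dividing by $k$ and letting $k\to\infty$ gives
$$\mu(\phi)=\mu(\theta)=2\lim_{k\to\infty}\frac{\deg\theta^k}{k},\qquad\text{so}\qquad\lim_{k\to\infty}\frac{\deg\theta^k}{k}=\frac{\mu(\phi)}{2}\in\frac{1}{2}\mathbb{N},$$
the membership because $\mu(\phi)$ is a non-negative integer.

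To pass from $\theta$ back to $\phi$ I claim $\deg\phi^k=a^2\deg\theta^k+O(1)$ as $k\to\infty$. Choose a birational morphism $\pi\colon S\to\p^2$ from a smooth rational surface on which the strict transform of $\Lambda$ is base-point free, defining a fibration $f\colon S\to\p^1$ with general fibre $\cong\p^1$, and on which $\phi$ lifts to $\phi_S\in\Bir(S)$. Then $\phi_S$ permutes the fibres of $f$, so the fibre class $F\in\Pic{S}$ satisfies $F^2=0$, $F\cdot K_S=-2$ and $\phi_S^{*}F=F$, while $\pi^{*}H\cdot F=a$ by B\'ezout. Writing $\deg\phi^k=(\pi^{*}H)\cdot(\phi_S^{k})^{*}(\pi^{*}H)$, the relation $\phi_S^{*}F=F$ makes the part of $\pi^{*}H$ supported on fibres contribute only a bounded amount, while its horizontal part --- an $a$-section of $f$ --- enters both factors of the intersection product and so contributes a factor $a$ each time; this yields the claim. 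Combining with the previous step, $\lim_k\deg\phi^k/k=\frac{\mu(\phi)}{2}a^2$, with $a=1$ if and only if $\Lambda$ is a pencil of lines, which is (2). For (1), given any $\psi\in\Bir(\p^2)$ the map $\psi\phi\psi^{-1}$ is a Jonqui\`eres twist preserving $\psi(\Lambda)$, a pencil of rational curves of some degree $a_\psi\ge 1$ equal to $1$ exactly when it is a pencil of lines; applying (2) to $\psi\phi\psi^{-1}$ and using $\mu(\psi\phi\psi^{-1})=\mu(\phi)$,
$$\lim_{k\to\infty}\frac{\deg(\psi\phi^k\psi^{-1})}{k}=\frac{\mu(\phi)}{2}a_\psi^{2}\ \ge\ \frac{\mu(\phi)}{2},$$
with equality if and only if $a_\psi=1$; since $\psi=\psi_0$ realizes $a_\psi=1$, the set in (1) admits the minimum $\frac{\mu(\phi)}{2}$.

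The main obstacle is the scaling identity $\deg\phi^k=a^2\deg\theta^k+O(1)$, i.e.\ making precise, uniformly in $k$, that conjugating a de Jonqui\`eres twist so that its invariant pencil acquires degree $a$ multiplies the linear growth rate by $a^2$. A clean way to organize the bookkeeping is to establish separately that $\b(\phi^k)=\b(\theta^k)+O(1)$ (a fixed conjugation changes the number of base-points by a bounded amount) and that $2\deg\phi^k=a^2\,\b(\phi^k)+O(1)$ on the surface $S$, the factors $2$ and $a^2$ coming respectively from $F\cdot K_S=-2$ and $\pi^{*}H\cdot F=a$, the error being controlled by the fixed numerical data $\sum_i\nu_i=3a-2$, $\sum_i\nu_i^{2}=a^{2}$ attached to the base-points of $\Lambda$ (where $\nu_i$ is the multiplicity of $\Lambda$ at its $i$-th base-point).
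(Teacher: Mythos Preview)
Your overall strategy matches the paper's: reduce to a de Jonqui\`eres map via $\psi_0$, compute there using the base-point count (this is the paper's Lemma~\ref{Lem:Jon}), and then show the growth rate scales by $a^2$ when the invariant pencil has degree $a$. The de Jonqui\`eres step and the derivation of part~(1) from part~(2) are fine.

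The gap is in the scaling step. You assert $\deg\phi^k=a^2\deg\theta^k+O(1)$, but the justification offered --- ``the horizontal part, an $a$-section of $f$, enters both factors of the intersection product and so contributes a factor $a$ each time'' --- is not a proof. On your surface $S$ the map $\phi_S$ is only birational, so $(\phi_S^k)^*$ does not preserve the intersection form, and you have not computed $(\phi_S^k)^*(\pi^*H)$ in $\Pic{S}$; the decomposition of $\pi^*H$ into ``horizontal'' and ``vertical'' parts does not by itself tell you how $(\phi_S^k)^*$ acts on the horizontal part. You yourself flag this (``The main obstacle is\dots'') and sketch an alternative via $2\deg\phi^k=a^2\,\b(\phi^k)+O(1)$, but this relation is also left unproven; the identities $\sum\nu_i=3a-2$, $\sum\nu_i^2=a^2$ for the base-points of the pencil are correct but you do not explain how they control the multiplicities of the \emph{homaloidal} system of $\phi^k$ at its base-points.

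The paper avoids this difficulty by working not on a resolution of the original pencil but on $\mathbb{F}_1$, the blow-up of the vertex $p$ of the conjugated pencil of lines. There the lift $\widehat\phi$ of $\theta$ fixes the fibre class $f$ and sends $L=\pi^*(\text{line})$ to $L+(d_k-1)f$, so its action on $\Pic{\mathbb{F}_1}$ is completely explicit. The transform $\Lambda=\pi^{-1}\psi_0(L_{\p^2})\sim aL+bf$ of the system of lines then satisfies $\widehat\phi^k(\Lambda)\sim aL+(a(d_k-1)+b)f$, giving total intersection $\Lambda\cdot\widehat\phi^k(\Lambda)=a^2d_k+2ab$. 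The bounded correction from base-points is controlled because $\Lambda$ has a fixed finite set of base-points and every base-point of $\widehat\phi^k(\Lambda)$ has multiplicity at most $a$ (since $\widehat\phi^k(\Lambda)\cdot f=a$). This makes the $a^2$ factor transparent. If you want to salvage your approach on $S$, you would need an analogous explicit description of $(\phi_S^k)^*$ on $\Pic{S}$, which essentially forces you back to the $\mathbb{F}_1$ picture.
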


\bigskip

The case of Halphen twists is similar. An Halphen twist preserves an unique pencil of elliptic curves (\emph{see} \cite{Gi}, \cite[Theorem 0.2]{DiFa}). Any such pencil can be sent by a birational map onto a pencil of curves of degree $3n$ with $9$ points of multiplicity $n$, called Halphen pencil. We obtain the following (Proposition~\ref{Pro:Halphen}):

\begin{theoalph}
Let $\phi\in \Bir(\p^2)$ be an Halphen twist. 

\begin{enumerate}
\item The set $$\left.\left\{\lim\limits_{k\to +\infty} \frac{\deg(\psi\phi^k\psi^{-1})}{k^2}\ \right\vert\ \psi\in \Bir(\p^2)\right\}$$ admits a minimum  $\kappa(\phi)\in \mathbb{Q}_{>0}$.

\item There exists an integer $a\ge 3$ such that $$\lim\limits_{k\to +\infty} \frac{\deg(\phi^k)}{k^2}=\kappa(\phi)\cdot\frac{a^2}{9}.$$

Moreover, $a=3$ if and only if $\phi$ preserves an Halphen pencil.
\end{enumerate}
\end{theoalph}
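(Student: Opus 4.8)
The plan is to conjugate $\phi$ to an automorphism of a relatively minimal rational elliptic surface and to read the quadratic growth off the induced isometry of its Picard lattice, in the spirit of the treatment of Jonqui\`eres twists. Since $\phi$ is an Halphen twist it preserves a unique pencil $\Lambda$ of elliptic curves; resolving the base-points of $\Lambda$ and then contracting the $(-1)$-curves contained in the fibres produces a smooth rational surface $\mathrm{S}$ carrying a relatively minimal elliptic fibration $\pi\colon\mathrm{S}\to\p^1$, together with a birational map $\sigma\colon\p^2\dashrightarrow\mathrm{S}$ for which $\widehat\phi:=\sigma\phi\sigma^{-1}$ is an automorphism of $\mathrm{S}$ preserving $\pi$ (\emph{see} \cite[Theorem 0.2]{DiFa}, \cite{Gi}). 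Let $m$ be the index of $\pi$, so the class $F$ of a fibre equals $-mK_{\mathrm{S}}$; then $\widehat\phi^{*}$ fixes $K_{\mathrm{S}}$ and is an isometry of $\Pic{\mathrm{S}}$. Because $\phi$ is an Halphen twist, and neither elliptic, nor a Jonqui\`eres twist, nor hyperbolic, the quasi-unipotent isometry $\widehat\phi^{*}$ has a unipotent part containing a single Jordan block of size exactly $3$, whose square has image $\r K_{\mathrm{S}}$ and kernel $K_{\mathrm{S}}^{\perp}$; hence there is a constant $c=c(\phi)\in\mathbb{Q}_{>0}$ (the translation norm of $\widehat\phi$) with $\big((\widehat\phi^{*})^{k}-\id-k(\widehat\phi^{*}-\id)\big)(v)\sim \tfrac{c}{2}k^{2}(v\cdot K_{\mathrm{S}})K_{\mathrm{S}}$, the positivity of $c$ being equivalent to the growth being unbounded.

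For any $\psi\in\Bir(\p^2)$, the birational map $\tau=\psi\sigma^{-1}\colon\mathrm{S}\dashrightarrow\p^2$ conjugates $\widehat\phi$ to $\psi\phi\psi^{-1}$, and the usual translation of degree growth into intersection theory (\emph{see} \cite{DiFa}, \cite{Ca}) gives $\deg(\psi\phi^{k}\psi^{-1})=v_\psi\cdot(\widehat\phi^{*})^{k}v_\psi$, where $v_\psi\in\Pic{\mathrm{S}}$ is the pull-back along $\tau$ of the class of a general line; with the previous paragraph this yields
$$\lim_{k\to+\infty}\frac{\deg(\psi\phi^{k}\psi^{-1})}{k^{2}}=\frac{c}{2}\,(v_\psi\cdot K_{\mathrm{S}})^{2}=\frac{c}{2}\,a_\psi^{2},\qquad a_\psi:=-v_\psi\cdot K_{\mathrm{S}}\in\z_{>0}.$$
Everything now reduces to bounding $a_\psi$. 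Factor $\tau$ as $q\circ p^{-1}$ with $p\colon\mathrm{S}''\to\mathrm{S}$ and $q\colon\mathrm{S}''\to\p^2$ birational morphisms; then $v_\psi=p_{*}L''$ for $L''=q^{*}(\text{line})$, so $(L'')^{2}=1$, $L''\cdot K_{\mathrm{S}''}=-3$, and $L''$ is nef. Writing $p^{*}K_{\mathrm{S}}=K_{\mathrm{S}''}-\mathcal{E}$ with $\mathcal{E}\ge 0$ the exceptional divisor of $p$, one gets $a_\psi=-L''\cdot p^{*}K_{\mathrm{S}}=3+L''\cdot\mathcal{E}\ge 3$, with equality if and only if $L''\cdot\mathcal{E}=0$, if and only if $L''=p^{*}v_\psi$ (equivalently $v_\psi^{2}=1$), if and only if $\tau$ is itself a morphism $\mathrm{S}\to\p^2$; this last case means $\mathrm{S}$ is the blow-up of $\p^2$ at nine points of multiplicity $m$, i.e. $\psi(\Lambda)$ is an Halphen pencil (of index $m$, since $\deg\psi(\Lambda)=v_\psi\cdot F=m\,a_\psi$). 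As every Halphen twist is conjugate to an automorphism of an Halphen surface \cite{Gi}, the value $a_\psi=3$ is attained. Hence the set in (1) is $\{\tfrac{c}{2}a_\psi^{2}\mid\psi\in\Bir(\p^2)\}$, a set of reals $\ge\tfrac{9c}{2}$ containing $\tfrac{9c}{2}$, so it has minimum $\kappa(\phi):=\tfrac{9c}{2}\in\mathbb{Q}_{>0}$, which proves (1); and taking $\psi=\id$, with $a:=a_{\id}\ge 3$, gives $\lim_{k}\deg(\phi^{k})/k^{2}=\tfrac{c}{2}a^{2}=\kappa(\phi)\cdot\tfrac{a^{2}}{9}$, where $a=3$ precisely when $\phi$ preserves an Halphen pencil.

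The main difficulty lies in the second paragraph: understanding exactly which line classes $v_\psi$ arise in $\Pic{\mathrm{S}}$ as $\psi$ ranges over $\Bir(\p^2)$, and deriving from this the sharp bound $a_\psi\ge 3$ together with the precise geometric meaning of the equality case -- this is where the structure of rational elliptic surfaces is genuinely used (nefness of $L''$, and the constraint that $\tau$ being a morphism forces the fibration to come from nine points of $\p^2$). A more technical but routine point is to make the first paragraph precise, namely to identify the size-$3$ unipotent block and to check that the resulting constant $c$ is a well-defined positive rational number, its denominator being controlled by the index $m$.
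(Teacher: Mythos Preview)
Your argument is correct and follows essentially the same route as the paper: conjugate $\phi$ to an automorphism $\widehat\phi$ of a rational elliptic surface $\mathrm{S}$, read the quadratic coefficient of the degree growth of any conjugate as a constant (depending only on $\phi$) times $(v_\psi\cdot K_{\mathrm{S}})^2$ for $v_\psi$ the pulled-back line class, and then minimise the anticanonical degree $-v_\psi\cdot K_{\mathrm{S}}$ to $3$, with equality exactly when the conjugating map is a morphism $\mathrm{S}\to\p^2$, i.e.\ when the invariant pencil is Halphen. The only difference is presentational: the paper passes to a power so that $\widehat\phi$ is a fibrewise translation and then quotes Gizatullin's explicit formula $D\mapsto D-m(D\cdot K_{\mathrm{S}})\Delta+\gamma K_{\mathrm{S}}$ on $\Pic{\mathrm{S}}$ (from which the constant $c=-m^2\Delta^2$ and its positivity drop out of a Cauchy--Schwarz computation), whereas you phrase the same thing via the size-$3$ Jordan block of the parabolic isometry; and for $a_\psi\ge 3$ the paper invokes the Noether relation $3d-\sum m_i=3$ for a homaloidal net while your projection-formula argument with the nefness of $L''$ is an equivalent repackaging.
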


\bigskip

An application of our results is the description of birational maps whose two distinct iterates are conjugate, the non-existence of embeddings of Baumslag-Solitar groups into the Cremona group and the 
description of the embeddings of $\mathrm{GL}(2,\mathbb{Q})$  into the Cremona group (\S\,\ref{Sec:applications}):

\begin{theoalph}
Let $\phi$ denote a birational map of $\mathbb{P}^2$ of infinite order. Assume that $\phi^n$ and $\phi^m$
 are conjugate and  that $\vert m\vert\not=\vert n\vert$. Then, $\phi$ is conjugate to an automorphism of $\C^2$ of the form $(x,y)\mapsto (\alpha x,y+1)$, where $\alpha\in \C^{*}$ such that $\alpha^{m+n}=1$ or $\alpha^{m-n}=1$.
 
In particular, if $\phi$ is conjugate to $\phi^n$ for any positive integer $n$, then $\phi$ is conjugate to $(x,y)\mapsto (x,y+1)$.
 \end{theoalph}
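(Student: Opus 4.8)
The plan is to use the conjugacy invariants $\lambda$, $\mu$ and the quadratic–growth number $\kappa$ to force $\phi$ to be elliptic of a very restricted kind, then to apply Proposition~\ref{Prop:ellipticinfiniteorder} and read off the condition on $\alpha$ from the classification of conjugacy classes of the resulting normal forms (Proposition~\ref{Prop:ConjDiagAlmostDiag}). First I would show that $\phi$ is elliptic. Since $\phi^n$ and $\phi^m$ are conjugate, the first dynamical degree gives $\lambda(\phi)^{|n|}=\lambda(\phi)^{|m|}$, hence $\lambda(\phi)=1$ as $|n|\neq|m|$; thus $\phi$ is elliptic, a Jonqui\`eres twist, or an Halphen twist. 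If $\phi$ were a Jonqui\`eres twist, then every conjugate of $\phi$ would again be one, so all the limits $\lim_{k\to\infty}\deg(\psi\phi^k\psi^{-1})/k$ would be positive, whence $\mu(\phi)>0$ by Proposition~\ref{pro:MinimalJonq}; but $\mu$ is a conjugacy invariant with $\mu(\phi^k)=|k|\,\mu(\phi)$, so $\phi^n\sim\phi^m$ would give $\mu(\phi)=0$, a contradiction. Similarly, if $\phi$ were an Halphen twist, Proposition~\ref{Pro:Halphen} provides a conjugacy–invariant $\kappa(\phi)\in\mathbb{Q}_{>0}$ with $\kappa(\phi^k)=|k|^2\kappa(\phi)$, and $\phi^n\sim\phi^m$ would force $\kappa(\phi)=0$, impossible. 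So $\phi$ is elliptic; being of infinite order it is conjugate, by Proposition~\ref{Prop:ellipticinfiniteorder}, to a map of type (1) or type (2) there. (Note $n,m\neq0$, since $\phi^0=\mathrm{id}$ and $\phi$ has infinite order.)

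Next I would rule out type (1). If $\phi\colon(x,y)\mapsto(\alpha x,\beta y)$ with $\ker\bigl(\mathbb{Z}^2\to\C^*,\ (i,j)\mapsto\alpha^i\beta^j\bigr)=\mathbb{Z}(k,0)$, then $\phi^n$ and $\phi^m$ are the diagonal maps with eigenvalue pairs $(\alpha^n,\beta^n)$ and $(\alpha^m,\beta^m)$. By Proposition~\ref{Prop:ConjDiagAlmostDiag}, such maps are conjugate in $\Bir(\p^2)$ exactly when their eigenvalue pairs are related by a monomial substitution, i.e.\ by the action of $\GL(2,\mathbb{Z})$; writing this out, $\phi^n\sim\phi^m$ means that $(m-na,-nb)$ and $(-nc,m-nd)$ lie in $\mathbb{Z}(k,0)$ for some $\left(\begin{smallmatrix}a&b\\ c&d\end{smallmatrix}\right)\in\GL(2,\mathbb{Z})$. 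As $n\neq0$ the first relation forces $b=0$ and the second forces $m=nd$; then $ad=\pm1$ gives $d=\pm1$, so $m=\pm n$, contradicting $|n|\neq|m|$.

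Hence $\phi$ is conjugate to $\psi_\alpha\colon(x,y)\mapsto(\alpha x,y+1)$ for some $\alpha\in\C^*$. Conjugating by $(x,y)\mapsto(x,y/k)$ identifies $\psi_\alpha^{k}$ with $\psi_{\alpha^{k}}$ for every $k\neq0$, and Proposition~\ref{Prop:ConjDiagAlmostDiag} gives that $\psi_\beta$ and $\psi_\gamma$ are conjugate in $\Bir(\p^2)$ if and only if $\gamma\in\{\beta,\beta^{-1}\}$ (the case $\gamma=\beta^{-1}$ coming from conjugation by $(x,y)\mapsto(1/x,y)$). Then $\phi^n\sim\phi^m$ yields $\psi_{\alpha^n}\sim\psi_{\alpha^m}$, so $\alpha^m\in\{\alpha^n,\alpha^{-n}\}$, that is $\alpha^{m-n}=1$ or $\alpha^{m+n}=1$, which is the claim. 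For the last assertion, applying this with $(n,m)=(1,2)$ gives $\alpha=1$ or $\alpha^3=1$, hence $\alpha^3=1$; applying it with $(n,m)=(1,3)$ gives $\alpha^2=1$ or $\alpha^4=1$; combined with $\alpha^3=1$ (and $\gcd(3,2)=\gcd(3,4)=1$) this forces $\alpha=1$, so $\phi$ is conjugate to $(x,y)\mapsto(x,y+1)$.

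The substance of the argument lies entirely in the quoted results, so once those are in place the theorem is a short computation. The step I expect to require the most care is precisely the diagonal case together with the exact list of conjugacies between the normal forms of Proposition~\ref{Prop:ellipticinfiniteorder}: one must be sure that passing from $\PGL_3$-conjugation to arbitrary birational conjugation introduces no identifications beyond monomial substitutions and $(x,y)\mapsto(1/x,y)$ — which is exactly the role of Proposition~\ref{Prop:ConjDiagAlmostDiag} — and this is what rules out $\phi^n\sim\phi^m$ with $|n|\neq|m|$ except in the translation–like family (2), and pins down $\alpha$ there.
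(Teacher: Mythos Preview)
Your proof is correct and follows essentially the same route as the paper: Lemma~\ref{Lem:EmbBSGrowth} uses $\lambda$, $\mu$ (via Corollary~\ref{Cor:conj}) and $\kappa$ (via Corollary~\ref{Cor:ConjHalphen}) exactly as you do to reduce to the elliptic case, and Corollary~\ref{Cor:Lin} then eliminates the diagonal normal form and extracts the root-of-unity condition on $\alpha$ via Proposition~\ref{Prop:ConjDiagAlmostDiag}, just as in your computation. The only cosmetic difference is that you cite the underlying Propositions~\ref{pro:MinimalJonq} and~\ref{Pro:Halphen} directly rather than their corollaries, and you spell out the ``in particular'' clause (with the pairs $(1,2)$ and $(1,3)$), which the paper leaves implicit.
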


\begin{theoalph}
If $\vert m\vert$, $\vert n\vert$, $1$ are distinct, there is no embedding of 
\begin{align*}
&\mathrm{BS}(m,n)=\langle r,\, s\,\vert\, rs^mr^{-1}=s^n\rangle, && m,\,n\in\mathbb{Z},\,mn\not=0
\end{align*}
into the Cremona group.
\end{theoalph}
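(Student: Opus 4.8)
The plan is to derive a contradiction from the relator $rs^mr^{-1}=s^n$ by combining the previous theorem (the classification of maps with two conjugate non-equal-exponent iterates) with basic facts about degree growth and dynamical degrees. First I would observe that the relation forces $\rho(s)^m$ and $\rho(s)^n$ to be conjugate in $\Bir(\p^2)$, where $\rho\colon\mathrm{BS}(m,n)\hookrightarrow\Bir(\p^2)$ is the putative embedding. Since $|m|\neq|n|$, the image $\phi:=\rho(s)$ must have infinite order: an element of finite order cannot have $\phi^m$ and $\phi^n$ conjugate unless $\phi^m=\phi^n$ up to conjugacy in a way compatible with the order, but more simply, if $\phi$ had finite order $d$ then $s^d$ would be central and $\mathrm{BS}(m,n)$ is not virtually nilpotent/has no such finite-index structure for $|m|\neq|n|\neq 1$ — I would instead argue directly that $\rho$ injective plus the structure of $\mathrm{BS}(m,n)$ forces $\phi$ of infinite order (the subgroup $\langle s\rangle$ is infinite cyclic).

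The heart is then to apply the previous theorem to $\phi=\rho(s)$: since $\phi^m$ is conjugate to $\phi^n$ with $|m|\neq|n|$, $\phi$ is conjugate to an automorphism of $\C^2$ of the form $(x,y)\mapsto(\alpha x, y+1)$ for some $\alpha\in\C^*$ with $\alpha^{m+n}=1$ or $\alpha^{m-n}=1$. So after conjugating the whole embedding, we may assume $\rho(s)=\psi$ where $\psi\colon(x,y)\mapsto(\alpha x,y+1)$. Next I would compute the centralizer of $\psi$ in $\Bir(\p^2)$; this is Lemma~\ref{Lem:centtransl} in the paper, but the relation $rs^mr^{-1}=s^n$ does not say $r$ centralizes $s$, it says $r$ conjugates $s^m$ to $s^n$. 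The key structural point to exploit is that $\psi$ is an \emph{elliptic} map (bounded degree growth), hence so are all its powers and all their conjugates; and $\rho(r)$ conjugates the elliptic map $\psi^m$ to the elliptic map $\psi^n$. The map $\psi^m$ is $(x,y)\mapsto(\alpha^m x, y+m)$, which is conjugate (via $(x,y)\mapsto(x,y/m)$) to $(x,y)\mapsto(\alpha^m x, y+1)$, and similarly $\psi^n$ to $(x,y)\mapsto(\alpha^n x,y+1)$. By the classification of conjugacy classes of such maps (Proposition~\ref{Prop:ConjDiagAlmostDiag}), $(x,y)\mapsto(\beta x, y+1)$ and $(x,y)\mapsto(\gamma x, y+1)$ are conjugate in $\Bir(\p^2)$ iff $\beta=\gamma^{\pm 1}$. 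Hence $\alpha^m=\alpha^{\pm n}$, i.e. $\alpha^{m-n}=1$ or $\alpha^{m+n}=1$ — consistent, no contradiction yet; so one more step is needed.

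The final step extracts the contradiction from the full group, not just the relator. Having fixed $\rho(s)=\psi\colon(x,y)\mapsto(\alpha x,y+1)$ with $\alpha$ a root of unity (of order dividing $m-n$ or $m+n$, in particular finite), I would consider the element $t:=\rho(r)$. Since $t\psi^m t^{-1}=\psi^n$ and both are conjugate to "translation-type" maps, $t$ must normalize the structure; concretely, $t$ sends the unique $\psi$-invariant pencil of rational curves (the fibration by $\{x=\mathrm{const}\}$, which is the pencil of lines through the point where the $\C^2$-embedding is centered, or rather the rational fibration whose general fiber $\psi$ permutes) to a $\psi$-related pencil. The clean way: $\psi$ preserves exactly one pencil of curves up to equivalence such that the action on the base is trivial-ish — in fact $\psi$ acts on $\p^1$ (the $x$-line) as $x\mapsto\alpha x$, which has finite order, and on each fiber as a translation $y\mapsto y+1$, which has infinite order. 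This decomposition is canonical, so $t$ normalizes the fibration $\{x=\mathrm{c}\}$, giving a homomorphism from $\langle s,t\rangle$ to $\Bir(\p^1)=\PGL(2,\C)$ (action on the base) and one must track the action on the generic fiber. On the generic fiber, $s$ acts as $y\mapsto y+1$ (translation by $1$ in $\C$, i.e. an infinite-order element of $\C\rtimes\C^*\cong\Aff(\C)$), and $s^m\mapsto y\mapsto y+m$ is conjugated by $t|_{\text{fiber}}\in\Aff(\C)$ to $y\mapsto y+n$; but an affine conjugation sends translation-by-$m$ to translation-by-$\lambda m$ for the linear part $\lambda$ of $t|_{\text{fiber}}$, forcing $\lambda m = n$ — and here $\lambda$ is a \emph{unit in the ring} of regular functions on an open subset of $\p^1$, hence (after restricting to the generic point) $\lambda\in\C^*$ is a constant, so $n/m\in\C^*$ is forced but that's automatic. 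The genuine obstruction, and the main difficulty, is to rule this out globally: I expect one must use that $|m|\neq|n|$ together with $|m|,|n|,1$ distinct to show the generic-fiber action cannot be realized by a birational map of the total space $\p^2$ — the point being that the fiberwise linear part $\lambda$, a regular invertible function on a Zariski-open of $\p^1$, is forced to be a nonconstant function (it must interpolate the ratio needed at infinitely near base-points of the conjugating map), contradicting $\lambda\in\C^*$; equivalently, no element of $\Bir(\p^2)$ can conjugate $\psi^m$ to $\psi^n$ while being compatible with generating a copy of $\mathrm{BS}(m,n)$ with the \emph{second} relator-consequence $r^2 s^{m}r^{-2}=s^{n^2/m}$ (which requires $m\mid n^2$), and iterating, $m^k\mid n^{k+1}$ for all $k$, forcing every prime factor of $m$ to divide $n$ and vice versa, then a valuation count gives $|m|=|n|$, the contradiction. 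So the main obstacle is organizing this last divisibility/valuation argument cleanly and checking it does not need $|m|,|n|,1$ distinct beyond excluding the cyclic and solvable degenerate cases; I would route it through the already-established fact that $\phi$ must be the translation $(x,y)\mapsto(x,y+1)$ when it is conjugate to all its powers, and note $\mathrm{BS}(m,n)$ contains elements conjugate to high powers of each other only in the degenerate excluded range.
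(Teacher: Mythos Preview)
Your setup is correct through the reduction: after conjugation, $\rho(s)=(\alpha x, y+1)$ with $\alpha$ a root of unity. But the final step contains a genuine error. The relation $r^2s^mr^{-2}=s^{n^2/m}$ that you invoke does \emph{not} hold in $\mathrm{BS}(m,n)$: from $rs^mr^{-1}=s^n$ one gets $r^2s^mr^{-2}=rs^nr^{-1}$, and $rs^nr^{-1}$ is a power of $s$ only when $m\mid n$. In general $rs^nr^{-1}$ is a new element, not $s^{n^2/m}$, so the divisibility chain $m^k\mid n^{k+1}$ never gets started. Your alternative suggestion---that the fiberwise linear part $\lambda$ is forced to be a nonconstant rational function---is also incorrect: one checks directly that $\psi=(x,\tfrac{n}{m}y)$ (or $(x^{-1},\tfrac{n}{m}y)$, according to which root-of-unity condition $\alpha$ satisfies) conjugates $\rho(s)^m$ to $\rho(s)^n$ with $\lambda=\tfrac{n}{m}$ constant. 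So there is no obstruction at the level of a single conjugating element, and no arithmetic contradiction can be extracted from $\langle s\rangle$ alone.

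The paper closes the argument by a different route, via solvability. Having normalized $\rho(s)=(\alpha x,y+1)$, one observes that the explicit $\psi$ above satisfies $\psi\rho(s)^m\psi^{-1}=\rho(s)^n$, hence $\rho(r)=\psi\tau$ with $\tau$ in the centralizer of $\rho(s)^m$. Lemma~\ref{Lem:centtransl} then forces $\tau=(\eta(x),y+R(x))$ with $\eta\in\PGL(2,\C)$ commuting with $x\mapsto\alpha^m x$, so $\rho(r)=\bigl(\eta(x)^{\pm1},\tfrac{n}{m}(y+R(x))\bigr)$. Both $\rho(r)$ and $\rho(s^m)$ therefore lie in a solvable subgroup of $\Bir(\p^2)$, so the image of the finite-index subgroup $\langle r,s^m\rangle\cong\mathrm{BS}(m^2,mn)$ is solvable. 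Since $\mathrm{BS}(m,n)$ has no solvable subgroup of finite index when $\vert m\vert,\vert n\vert,1$ are distinct, this contradicts injectivity. The missing idea in your attempt is precisely this: rather than looking for an arithmetic obstruction, pin down the shape of $\rho(r)$ via the centralizer description and invoke the non-virtual-solvability of $\mathrm{BS}(m,n)$.
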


\begin{theoalph}
Let $\rho\colon \GL(2,\mathbb{Q})\to \Bir(\p^2)$ be an embedding. Up to conjugation by an element of $\Bir(\p^2)$, there 
exists an odd integer $k$ and an homomorphism $\chi\colon \mathbb{Q}^{*}\to \mathbb{C}^{*}$ $($with respect to multiplication$)$ 
 such that
\begin{align*}
&\rho\left(\left[\begin{array}{cc}a & b\\ c & d\end{array}\right]\right)=\left(x\cdot 
\frac{\chi(ad-bc)}{(cy+d)^k},\frac{ay+b}{cy+d}\right), &&\forall \left[\begin{array}{cc}a & b\\ c & d\end{array}\right]\in 
\GL(2,\mathbb{Q}).
\end{align*}
\end{theoalph}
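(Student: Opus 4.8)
The plan is to exploit the structure of $\GL(2,\mathbb{Q})$ together with the previous results on elliptic elements of infinite order (Theorem A) and on elements whose iterates are conjugate (the penultimate Theorem). First I would restrict $\rho$ to the diagonal torus $D\cong (\mathbb{Q}^*)^2$ and to the elementary unipotent $u=\left(\begin{smallmatrix}1&1\\0&1\end{smallmatrix}\right)$. Since $u$ is conjugate in $\GL(2,\mathbb{Q})$ to $u^n$ for every $n\neq 0$ (conjugate by $\mathrm{diag}(n,1)$), the image $\rho(u)$ is a birational map of infinite order that is conjugate to all its powers; by the penultimate Theorem, $\rho(u)$ is conjugate to $(x,y)\mapsto(x,y+1)$. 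After conjugating $\rho$ by a suitable element of $\Bir(\p^2)$, I may and will assume $\rho(u)=(x,y)\mapsto(x,y+1)$.

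Next I would compute the centralizer and normalizer of $\rho(u)$ in $\Bir(\p^2)$: the relevant part is Lemma~\ref{Lem:centtransl}, which describes the centralizer of $(x,y)\mapsto(x,y+1)$. The subgroup of $\GL(2,\mathbb{Q})$ fixing $u$ up to the scaling relations — concretely the group generated by $D$ and the other unipotent — must land in the normalizer of $\langle \rho(u)\rangle$, and acting by conjugation on $\rho(u)=(x,y)\mapsto(x,y+1)$ it must rescale the translation by a rational factor, which forces these images to be of the form $(x,y)\mapsto\big(x\cdot\theta(\cdot),\frac{ay+b}{cy+d}\big)$ acting on the $y$-coordinate through the standard action of $\PGL(2,\mathbb{Q})$ on $\p^1$. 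Using that $\rho$ is a group homomorphism and that $\PGL(2,\mathbb{Q})$ is generated by $u$, its transpose, and $D$, I would propagate this normal form to all of $\GL(2,\mathbb{Q})$: the second coordinate of $\rho(M)$ must be $\frac{ay+b}{cy+d}$ for every $M=\left(\begin{smallmatrix}a&b\\c&d\end{smallmatrix}\right)$, since the composite of $\rho$ with the projection to the $y$-line gives a homomorphism $\GL(2,\mathbb{Q})\to\PGL(2,\mathbb{Q})$ restricting to the identity on a generating set, hence the standard projection.

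It then remains to analyze the first coordinate. Writing $\rho(M)=\big(x\cdot f_M(y),\frac{ay+b}{cy+d}\big)$ with $f_M\in\mathbb{C}(y)^*$ (the $x$-coordinate must be linear in $x$ because $\rho(M)$ commutes appropriately with the $x$-part of the torus and $\rho(u)$ acts trivially on $x$), the cocycle relation coming from $\rho(MM')=\rho(M)\rho(M')$ becomes $f_{MM'}(y)=f_M\big(\tfrac{a'y+b'}{c'y+d'}\big)\cdot f_{M'}(y)$. I would solve this functional equation: evaluating on $D$ and on the two unipotents shows $f_u=f_{u^t}=1$ up to the cocycle being a coboundary, reduce to $f$ depending only on $\det M$ through the determinant character, and the exponent $k$ of $(cy+d)$ in the denominator is pinned down by compatibility with $\rho(u)$ having infinite order and by the relation forcing $k$ odd (the element $-\mathrm{Id}$ and the Weyl element impose the parity). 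The main obstacle I expect is precisely this last cohomological step — showing the multiplicative cocycle $f_M$ is, after a further coboundary adjustment (i.e. a final conjugation of $\rho$ by a map of the form $(x,y)\mapsto(x\cdot g(y),y)$), exactly $\frac{\chi(\det M)}{(cy+d)^k}$ with $k$ odd — since it requires ruling out more exotic solutions of the functional equation and carefully tracking which adjustments remain available after the normalizations already made.
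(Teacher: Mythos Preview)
Your overall route matches the paper's: normalise $\rho(u)$ to $(x,y+1)$, pin down the Borel, then the Weyl element, then read off the formula. The genuine gap is that you jump to the shape $\rho(M)=\big(x\cdot f_M(y),\tfrac{ay+b}{cy+d}\big)$ without justification, and this is where the real work lies. Your parenthetical reason---``commutes with the $x$-part of the torus''---is circular: at this stage only $\rho(u)=(x,y+1)$ is known, and Lemma~\ref{Lem:centtransl} tells you that elements commuting with it look like $(\eta(x),\,y+R(x))$ with $\eta\in\PGL(2,\C)$, which is \emph{not} of the form $x\cdot f(y)$. Likewise the ``projection to the $y$-line'' is not a homomorphism until one knows the second coordinate of every $\rho(M)$ depends on $y$ alone. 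The paper earns this as follows: from $d_{m,n}t_1d_{m,n}^{-1}=t_{m/n}$ one gets $\rho(d_{m,n})=(\eta_{m,n}(x),\tfrac{m}{n}y+R_{m,n}(x))$; the homomorphism $(\mathbb{Q}^*)^2\to\PGL(2,\C)$, $(m,n)\mapsto\eta_{m,n}$, cannot be injective, so some $\eta_{m_0,n_0}=\mathrm{id}$ with $m_0\neq n_0$, and conjugating by $\big(x,\,y+\tfrac{R_{m_0,n_0}(x)}{m_0/n_0-1}\big)$ gives $\rho(d_{m_0,n_0})=(x,\tfrac{m_0}{n_0}y)$; commutation with this element forces every $\rho(d_{a,b})=(\eta_{a,b}(x),\tfrac{a}{b}y)$, and a further conjugation in $\PGL(2,\C)$ diagonalises the injective map $a\mapsto\eta_{a,a}$. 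Only now, with a central element $(\chi_{2,2}x,y)$ of infinite order in hand, does the centraliser description force every $\rho(M)$ into the shape $(xR(y),\nu(y))$, and only then can your cocycle analysis start.

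Two smaller issues. The lower unipotent does not normalise $\langle u\rangle$ in $\GL(2,\mathbb{Q})$, so your normaliser argument does not reach it; the paper handles the Weyl element $M=\left(\begin{smallmatrix}0&1\\-1&0\end{smallmatrix}\right)$ separately via $M\,d_{a,1}=d_{1,a}M$ and $(Mt_1)^3=\mathrm{id}$, and it is this last relation (together with $M^2=-\mathrm{id}$) that pins down $\alpha=-1$ and $k$ odd. And before any of the above, one must show $\rho(t_q)=(x,y+q)$ for every $q\in\mathbb{Q}$: knowing $\rho(t_{1/n})$ commutes with $\rho(t_1)$ only gives $(\eta(x),y+R(x))$ with $\eta^n=\mathrm{id}$, and ruling out a nontrivial root of unity in $\eta$ requires the conjugacy invariant of Proposition~\ref{Prop:ConjDiagAlmostDiag}.
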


\begin{rem}
All these statements hold replacing $\mathbb{C}$ by an algebraically closed field of characteristic 
zero.
\end{rem}

\subsection*{Acknowledgements} The authors would like to thank Yves de Cornulier and Ivan Marin for interesting discussions 
on group theory. Thanks also to Charles Favre for questions on elliptic maps of infinite order and to Serge Cantat for his 
remarks and references.

\section{Elliptic maps}
\label{Sec:DegBounded}
\subsection{Classification of elliptic maps of infinite order}

If $\phi$ is a birational map of $\mathbb{P}^2$ such that $\left\{\deg\phi^k\right\}_{k\in\mathbb{N}}$ 
is bounded, it is conjugate to an automorphism $g$ of a smooth rational surface $\mathrm{S}$ such that the action of 
$g$ on $\mathrm{Pic}(\mathrm{S})$ is finite \cite[Lemma 4.1]{DiFa}. If $g$ has finite order, the possible conjugacy classes are 
completely 
classified in \cite{BlaC}. Here we deal with the case of elements of infinite order, classifying the possibilities 
and describing its centralizers in $\Bir(\p^2)$.
 
\begin{pro}\label{Prp:P2Fn}
Let $g$ be an automorphism of a smooth rational surface $\mathrm{S}$ which has infinite order but has a finite action 
on $\mathrm{Pic}(\mathrm{S})$. Then, there exists a birational morphism $\mathrm{S}\to\mathrm{X}$ where $\mathrm{X}$ 
is equal either to $\p^2$ or to an Hirzebruch surface $\mathbb{F}_n$ for $n\not=1$, which conjugates $g$ to an automorphism 
of $\mathrm{X}$.
\end{pro}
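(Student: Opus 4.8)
The plan is to reduce $g$ to an automorphism of a minimal rational surface by running an equivariant Minimal Model Program, taking care that the finiteness of the action on $\mathrm{Pic}$ survives at each step. Since $g$ has finite action on $\mathrm{Pic}(\mathrm{S})$, some power $g^m$ acts trivially on $\mathrm{Pic}(\mathrm{S})$; I will first work with $G=\langle g\rangle$ and observe that the set of $(-1)$-curves on $\mathrm{S}$ that form a $G$-orbit of pairwise disjoint curves can be contracted $G$-equivariantly, producing an automorphism of a smoother surface. Concretely, I would invoke the equivariant MMP for surfaces (as in Manin or Iskovskikh, or the account in the references the paper will use): there is a $G$-equivariant birational morphism $\mathrm{S}\to\mathrm{X}$ with $\mathrm{X}$ a $G$-minimal rational surface, so $\mathrm{X}$ is either $\p^2$, a Hirzebruch surface $\mathbb{F}_n$ with $n\neq 1$, or admits a $G$-equivariant conic bundle structure. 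The key point is that such a morphism can be obtained by contracting only curves contained in $G$-orbits of disjoint $(-1)$-curves, hence the action of $g$ on $\mathrm{Pic}(\mathrm{X})$ remains finite (it is a quotient of a finite-order action).

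Next I would eliminate the conic bundle case that is not already a Hirzebruch surface. If $\mathrm{X}\to \p^1$ is a $G$-equivariant conic bundle with at least one singular fibre, then the action of $g$ on $\mathrm{Pic}(\mathrm{X})$ permutes the components of the singular fibres; but a conic bundle with $k\geq 1$ singular fibres has Picard rank $2+k$, and the intersection form plus the requirement that $g^*$ preserve the classes of the two components of each singular fibre and the fibre class forces, after a further $G$-equivariant contraction of one component in each singular fibre, a morphism down to a Hirzebruch surface — or else $g^*$ has infinite order on $\mathrm{Pic}$, contradicting the hypothesis. (More precisely: the orbit of a single $(-1)$-curve in a singular fibre under $g$ consists of disjoint $(-1)$-curves since distinct singular fibres are disjoint, so we may contract them all equivariantly.) Either way we land on $\p^2$ or some $\mathbb{F}_n$, $n\neq 1$, with $g$ acting as a genuine automorphism there and still with finite action on the Picard group; note $\mathbb{F}_1$ is excluded because its unique $(-1)$-curve is $g$-invariant and can be contracted to give $\p^2$.

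The main obstacle, and the step needing the most care, is ensuring that the contractions used are genuinely $g$-equivariant with $g$ of \emph{infinite} order throughout — one must check that contracting a $g$-orbit of disjoint $(-1)$-curves does not secretly force $g$ to have finite order, and that the relevant orbit really is finite and consists of disjoint curves (this uses that $g^*$ has finite order on $\mathrm{Pic}(\mathrm{S})$, so there are only finitely many classes in the orbit of a given $(-1)$-class, and two $(-1)$-curves in the same orbit with the same self-intersection and linked by a power of $g$ that fixes their classes must be disjoint or equal by an intersection-number argument). I would also record, for use later in the section, the structure of $\Aut(\mathbb{F}_n)$ and $\Aut(\p^2)=\PGL(3,\C)$, since the subsequent classification of elliptic elements of infinite order (Proposition~\ref{Prop:ellipticinfiniteorder}) will analyse which elements of these groups have infinite order while acting with finite order on the Picard lattice; on $\mathbb{F}_n$ with $n\geq 2$ this pins $g$ down to the torus part up to the hyperelliptic involution, and contracting the section of self-intersection $-n$ when it is $g$-invariant (or handling the case it is not) will finish the reduction to $\p^2$ in all the infinite-order cases, which is what Theorem~A ultimately needs.
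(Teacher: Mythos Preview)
Your outline has two genuine gaps, both at the heart of the argument.

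First, the outcome of the $G$-equivariant MMP is misstated. For a finite group $G$ acting on a rational surface, the $G$-minimal models are \emph{either} del Pezzo surfaces with $\mathrm{Pic}^G$ of rank~$1$ \emph{or} conic bundles with $\mathrm{Pic}^G$ of rank~$2$; they are not just $\p^2$, $\mathbb{F}_n$ ($n\neq 1$), or conic bundles. You have silently discarded the del Pezzo surfaces of degree $\le 7$. The paper handles this case by observing that infinite order of $g$ with finite action on $\mathrm{Pic}$ forces the kernel of $\Aut(\mathrm{S})\to\Aut(\mathrm{Pic}(\mathrm{S}))$ to be infinite, hence $(K_\mathrm{S})^2\ge 6$; it then rules out degree~$7$ easily and spends real work on the degree~$6$ del Pezzo, showing explicitly that any automorphism acting minimally there has order~$6$. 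This is the core case analysis and it is missing from your proposal.

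Second, your treatment of the conic bundle case is incorrect. You claim that the $g$-orbit of a $(-1)$-curve lying in a singular fibre consists of pairwise disjoint curves ``since distinct singular fibres are disjoint''. But the two components of the \emph{same} singular fibre meet, and $G$-minimality means precisely that no $g$-invariant set of disjoint $(-1)$-curves exists---so some power of $g$ must swap the two components of each singular fibre, and the orbit is not disjoint. One cannot simply contract equivariantly down to a Hirzebruch surface. The paper instead argues carefully: it shows $g$ itself exchanges the components of every singular fibre, bounds the number of singular fibres by~$2$, analyses the possible sections, and concludes that the surface is the degree~$6$ del Pezzo---feeding back into the case already excluded.

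A minor point: your final paragraph drifts into the content of Proposition~\ref{Prop:ellipticinfiniteorder} (reducing $\mathbb{F}_n$ further to $\p^2$), which is not part of the present statement.
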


\begin{rem}
The proof of this result follows from a study of possible minimal pairs, which is similar to the one made in \cite{BlaGGD} for 
finite abelian subgroups of $\Bir(\p^2)$ (\emph{see} \cite[Lemmas 3.2, 6.1, 9.7]{BlaGGD}).
\end{rem}

\begin{proof}
Contracting the possible sets of disjoint $(-1)$-curves on $\mathrm{S}$ which are invariant by $g$, we can assume that the 
action of $g$ on $\mathrm{S}$ is minimal. The action of $g$ on $\mathrm{Pic}(\mathrm{S})$ being of finite order, the process 
corresponds to applying a $G$-Mori program, where $G$ is a finite group acting on $\mathrm{Pic}(\mathrm{S})$ (we only look at 
parts of the Picard group which are invariant). Then one of the following occurs (\cite{Man67, Isk79}):
\begin{enumerate}
\item
$\mathrm{Pic}(\mathrm{S})^g$ has rank $1$ and $\mathrm{S}$ is a del Pezzo surface;
\item
$\mathrm{Pic}(\mathrm{S})^g$ has rank $2$, and there exist a conic bundle $\pi\colon \mathrm{S}\to \p^1$ on $\mathrm{S}$, 
together with an automorphism $h$ of $\p^1$ such that $h\circ \pi=\pi\circ g$.
\end{enumerate}
We want to show that $\mathrm{S}$ is $\p^2$ or an Hirzebruch surface $\mathbb{F}_n$ for $n\not=1$, and exclude the other cases. 

In the case where $\mathrm{Pic}(\mathrm{S})^g$ has rank $1$, the fact that $g$ has infinite order but finite action on 
$\mathrm{Pic}(\mathrm{S})$ implies that the kernel of the group homomorphism $\Aut(\mathrm{S})\to \Aut(\mathrm{Pic}(\mathrm{S}))$ 
is infinite. So $\mathrm{S}$ is a del Pezzo surface of degree $(K_\mathrm{S})^2\ge 6$.  The surface cannot be $\mathbb{F}_1$ 
otherwise the exceptional section would be invariant. Similarly, it cannot be 
the unique del Pezzo surface of degree $7$, which has exactly three $(-1)$-curves, forming a chain (one touches the two others, 
which are disjoint), because the curve of the middle (and also the union of the two others) would be invariant. The only 
possibilities are thus $\p^2$, $\p^1\times\p^1=\mathbb{F}_0$, and the del Pezzo surface of degree $6$.

If $\mathrm{S}$ is the del Pezzo surface of degree $6$, any element $h\in\Aut(\mathrm{S})$ acting minimally on $\mathrm{S}$ 
has finite or\-der~\cite[Lemma 9.7]{BlaGGD}. Let us recall the simple argument. The del Pezzo surface of degree $6$ is 
isomorphic to 
$$\mathrm{S}=\left\{(x:y:z),(u:v:w)\in \p^2\times\p^2\, \vert\, xu=yv=zw\right\}.$$
The projections $\pi_1$, $\pi_2$ on each factor are birational morphisms contracting three $(-1)$-curves on $p_1=(1:~0:~0)$,
$p_2=(0:1:0)$ and $p_3=(0:0:1)$. 
The group $\mathrm{Pic}(\mathrm{S})$ is generated by the six $(-1)$-curves of $\mathrm{S}$, which are $E_{i}=(\pi_1)^{-1}(p_i)$ 
and $F_{i}=(\pi_2)^{-1}(p_i)$ for   $1\le i\le 3$ and form an hexagon. In fact, the action on the hexagon gives rise to an isomorphism 
$\Aut(\mathrm{S})\simeq (\mathbb{C}^{*})^2\rtimes (\mathrm{Sym}_3\rtimes \mathbb{Z}/2\mathbb{Z})=(\mathbb{C}^{*})^2\rtimes D_6$.
 The action of $g$ on $\mathrm{S}$ being minimal,~$g$  permutes cyclically the curves, and either $g$ or $g^{-1}$ acts as $(E_1
\to F_2\to E_3\to F_1\to E_2\to F_3)$. This implies that $g$ or $g^{-1}$ is equal to 
$$\big((x:y:z),(u:v:w)\big)\to \big((\alpha v:\beta w:u),(\beta y:\alpha z:\alpha\beta x)\big),$$
 for some $\alpha$, $\beta\in \C^{*}$, and has order $6$. \\

We can now assume the existence of an invariant conic bundle $\pi\colon \mathrm{S}\to \p^1$. If $\pi$ has no singular fibre, 
then~$\mathrm{S}$ is a Hirzebruch surface $\mathbb{F}_n$ and $n\not=1$ because of the minimality of the action. It remains to 
exclude the case where $\pi$ has at least one singular fibre. 
The minimality of the action on $\mathrm{S}$ implies that the two components of any singular fibre $F$ (which are two $(-1)$-curves) 
are exchanged by a power $g^k$ of $g$, and in particular that the whole singular fibre is invariant by $g^k$. Note that $k$ a priori 
depends on $F$.

We now prove that $g^k$ does not act trivially on the basis of the conic bundle. If $g^{k}$ acts trivially on the basis of the 
fibration, the automorphism $g^{2k}$ acts trivially on $\mathrm{Pic}(\mathrm{S})$; taking a birational morphism $\mathrm{S}\to 
\mathbb{F}_n$ which contracts a component in each singular fibre one conjugates $g^{2k}$ to an automorphism of $\mathbb{F}_n$, 
which fixes pointwise at least one section. The pull-back on $\mathrm{S}$ of this section intersects only one component in each 
singular fibre and its image by $g^{k}$ gives thus another section, also fixed by $g^{2k}$. The action of $g^k$ on a general 
fibre of $\pi$ exchanges the two points of the two sections and hence has order $2$: contradiction. 

The action of $g^k$ on the basis is non-trivial and fixes the point of $\p^1$ corresponding to the singular fibre; so the same 
holds for $g$ (recall that the fixed points of an element of $\Aut(\p^1)$ and any of its non-trivial powers are the same). This implies that $F$ is invariant by $g$, so its two components are exchanged by it (and thus~$k$ is odd).

 In particular, $g$ exchanges the two components of any singular fibre. This implies that the number of singular fibres of $\pi$ 
is at most $2$, so $\mathrm{S}$ is the blow-up of one or two points of an Hirzebruch surface. 
 
The fact that the two components of at least one singular fibre are exchanged gives a symmetry on the sections, that will help us 
to determine $\mathrm{S}$. Denote by $-m$ the minimal self-intersection of a section of~$\pi$ and let $s$ be one section which 
realises this minimum. Contracting the components in the singular fibres which do not intersect $s$, one has a birational morphism 
$\mathrm{S}\to \mathbb{F}_{n}$. The image of $s$ is a section with minimal self-intersection, so $m=n$. If $n=0$, then 
taking some section of $\mathbb{F}_0=\p^1\times\p^1$ of self-intersection~$0$ passing through at least one blown-up point, its strict transform on $\mathrm{S}$ would be a section of 
negative self-intersection, which contradicts the minimality
of $s^2$, so $m=n>0$. Let us denote by $s'$ the section $g(s)$, which also has self-intersection $-m$ on $\mathrm{S}$ but 
self-intersection $-m+r$ on $\mathbb{F}_m$, where $r$ is the number of singular fibres of $\pi$. Because any section of $\mathbb{F}_m$ 
distinct from the exceptional section has self-intersection  $\ge m$, we get $-m+r\ge m$, so $2\ge r\ge 2m$, which implies that $m=1$ 
and $r=2$. 

The surface $\mathrm{S}$ is thus the blow-up of two points on $\mathbb{F}_1$, not lying on the exceptional section and not on the same
 fibre, so is a del Pezzo surface of degree $6$. The fact that $g$ acts minimally on $\mathrm{S}$ is impossible, as we already 
observed.
\end{proof}

\begin{pro}\label{Prop:ellipticinfiniteorder}
Let $\phi$ be a birational map of $\mathbb{P}^2$ of infinite order, such that $\left\{\deg\phi^k\right\}_{k\in\mathbb{N}}$ is bounded. 

Then $\phi$ is conjugate to an automorphism of $\p^2$, which restricts to one of the following automorphisms on some open subset 
isomorphic to $\C^2$:
\begin{enumerate}
\item
 $(x,y)\mapsto (\alpha x,\beta y)$, where  $\alpha$, $\beta\in \C^{*}$, and where the kernel of the group homomorphism $\mathbb{Z}^2 
\to \C^{*}$ given by $(i,j)\mapsto \alpha^i \beta^j$ is generated by $(k,0)$  for some $k\in \mathbb{Z}$;
 \item
$(x,y)\mapsto (\alpha x, y+1)$, where $\alpha\in \C^{*}$.
\end{enumerate}
\end{pro}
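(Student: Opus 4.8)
The plan is to reduce $\phi$ to an explicit automorphism of a rational surface and then to normalise it by hand. By \cite[Lemma 4.1]{DiFa} the map $\phi$ is conjugate in $\Bir(\p^2)$ to an automorphism $g$ of a smooth rational surface $\mathrm{S}$ whose action on $\mathrm{Pic}(\mathrm{S})$ is finite, and by Proposition~\ref{Prp:P2Fn} one may moreover assume $\mathrm{S}=\p^2$ or $\mathrm{S}=\mathbb{F}_n$ with $n\neq1$. I would first bring $g$ onto a chart $\C^2\subset\mathrm{S}$ into a common affine shape. If $\mathrm{S}=\p^2$, then $g\in\PGL_3(\C)$ and its Jordan form shows that in a suitable affine chart $g$ is either diagonal, $(x,y)\mapsto(\alpha x,\beta y)$, or of the form $(x,y)\mapsto(ax+b,cy+p(x))$ with $\deg p\le1$. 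If $\mathrm{S}$ is a Hirzebruch surface and $g$ preserves a ruling --- automatic when $n\geq2$, where the ruling is unique --- then $g$ fixes the section of negative self-intersection, induces an automorphism $\bar g\in\PGL_2$ on the base $\p^1$, and in the chart $\C^2\subset\mathrm{S}$ complementary to one fibre and to that section it takes the form $(x,y)\mapsto(ax+b,cy+p(x))$ with $a,c\in\C^{*}$ and $\deg p\leq n$ (standard description of $\Aut(\mathbb{F}_n)$). Finally, if $\mathrm{S}=\mathbb{F}_0=\p^1\times\p^1$ and $g$ exchanges the two rulings, then after one conjugation $g$ becomes $(x,y)\dashrightarrow(y,m(x))$ for some $m\in\PGL_2$; putting $m$ in Jordan form makes this map linear or affine-linear on $\C^2$, hence the restriction of an element of $\PGL_3(\C)$, and we fall back on the case $\mathrm{S}=\p^2$. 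So it suffices to normalise a map $g\colon(x,y)\mapsto(ax+b,cy+p(x))$.

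Up to conjugacy in $\PGL_2$ the base part $x\mapsto ax+b$ is $x\mapsto x+1$ or $x\mapsto\lambda x$ for some $\lambda\in\C^{*}$. To normalise the fibre part I would conjugate by $(x,y)\mapsto(x,y+q(x))$, which replaces $p(x)$ by $p(x)+q(ax+b)-cq(x)$; the essential point is that $q$ may be taken of \emph{arbitrary} degree, since the associated polynomial automorphism of $\C^2$ lies in $\Bir(\p^2)$ whether or not it extends to an automorphism of $\mathbb{F}_n$. The linear operator $q\mapsto q(ax+b)-cq(x)$ on $\C[x]$ is surjective except precisely when the base is $x\mapsto\lambda x$ and $c=\lambda^j$ for some $j\geq0$ (``resonance''), where its cokernel is spanned by the monomials $x^k$ with $\lambda^k=c$. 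When it is surjective, $p$ can be annihilated and $g$ becomes $(x,y)\mapsto(x+1,cy)$, which is of type (2) after exchanging $x$ and $y$, or the diagonal map $(x,y)\mapsto(\lambda x,cy)$, treated in the last step. In the resonant case $p$ reduces to a polynomial $\tilde p$ satisfying $\tilde p(\lambda x)=c\,\tilde p(x)$; if $\tilde p\neq0$, conjugating further by the birational map $(x,y)\mapsto(x,y/\tilde p(x))$ and then rescaling $y$ turns $g$ into $(x,y)\mapsto(\lambda x,y+1)$, of type (2), while if $\tilde p=0$ one is back to the diagonal case. The hypothesis that $g$ has infinite order is what, at the end, excludes the periodic diagonal maps.

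It then remains to put a diagonal map $g\colon(x,y)\mapsto(\alpha x,\beta y)$ of infinite order into the precise form required in (1). The subgroup $\Lambda=\ker\big(\z^2\to\C^{*},\ (i,j)\mapsto\alpha^i\beta^j\big)$ cannot have rank $2$, for otherwise $\alpha$ and $\beta$ would both be roots of unity and $g$ would have finite order. If $\Lambda=\{0\}$ it is generated by $(0,0)=(k,0)$ with $k=0$. If $\Lambda$ has rank $1$, write its generator as $kv$ with $v$ primitive and $k\in\z$, complete $v$ to a $\z$-basis of $\z^2$, let $M\in\GL_2(\z)$ have first row $v$, and conjugate $g$ by the monomial transformation attached to $M$, which is a birational self-map of $\p^2$ since $M\in\GL_2(\z)$. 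This keeps $g$ diagonal and replaces $\Lambda$ by $(\transp M)^{-1}\Lambda=\z\cdot(k,0)$, which is exactly the condition in (1).

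The step I expect to be the main obstacle is the fibre normalisation over the Hirzebruch surfaces: one must organise the resonance cases $c\in\{1,\lambda,\dots,\lambda^n\}$, separate parabolic from diagonalisable base automorphisms, deal with roots of unity among $\lambda$ and $\beta$, and --- most delicately --- verify that every conjugating map used (the monomial transformations, the substitutions $y\mapsto y+q(x)$ and $y\mapsto y/\tilde p(x)$, and the coordinate exchanges) genuinely lies in $\Bir(\p^2)$ while the final map visibly restricts to an automorphism of $\p^2$ on some copy of $\C^2$. The individual computations are elementary; essentially all the content of the statement lies in this bookkeeping.
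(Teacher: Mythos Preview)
Your argument is essentially correct and is organised rather differently from the paper's own proof, so a brief comparison is worthwhile.

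For the Hirzebruch case the paper proceeds geometrically: whenever $g\in\Aut(\mathbb{F}_n)$ ($n\ge2$) has a fixed point off the exceptional section it performs an elementary link $\mathbb{F}_n\dasharrow\mathbb{F}_{n-1}$ and repeats, so that one only has to analyse the residual situation where every fixed point lies on the negative section; this forces relations such as $\beta=\alpha^n$, $\deg Q=n$, etc., and the final conjugations are then read off from these constraints. Your approach bypasses the elementary links entirely: you keep $g$ on whatever $\mathbb{F}_n$ it sits on and normalise the fibre part by the linear operator $q\mapsto q(ax+b)-cq(x)$ on $\C[x]$, exploiting that the conjugations $(x,y)\mapsto(x,y+q(x))$ lie in $\Bir(\p^2)$ for $q$ of any degree even though they are not automorphisms of $\mathbb{F}_n$. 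This is a cleaner, more uniform bookkeeping and lands on exactly the same explicit conjugations (including $(x,y)\mapsto(x,y/\tilde p(x))$ in the resonant case) that the paper eventually uses. The paper's route has the advantage of staying inside automorphism groups as long as possible; yours has the advantage of a single template covering $\p^2$ and all $\mathbb{F}_n$ at once.

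One small imprecision to tidy up: your sentence ``$g$ fixes the section of negative self-intersection'' and the shape $(ax+b,cy+p(x))$ are literally correct only for $n\ge2$. On $\mathbb{F}_0$ a ruling-preserving automorphism is $(m_1(x),m_2(y))$ with $m_1,m_2\in\PGL_2(\C)$, and $m_2$ need not be affine in a random chart. This is harmless---choose the chart so that a fixed point of $m_2$ goes to infinity (equivalently, put $m_1$ and $m_2$ separately into Jordan form), and you are back to $(ax+b,cy+d)$---but as written the $\mathbb{F}_0$ ruling-preserving case slips between your Hirzebruch paragraph and your ruling-exchange paragraph. The paper handles $\mathbb{F}_0$ instead by blowing up a fixed point and contracting the two rulings through it to pass to $\p^2$; either fix works.

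The diagonal normalisation via $\GL_2(\z)$ acting on the kernel $\Lambda\subset\z^2$ is exactly what the paper does (Smith normal form plus the monomial conjugation $(x,y)\dasharrow(x^ay^b,x^cy^d)$), and your observation that infinite order rules out $\mathrm{rank}\,\Lambda=2$ is the right way to use the hypothesis.
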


\begin{proof}
According to Proposition~\ref{Prp:P2Fn} the map $\phi$ is conjugate to an automorphism of a minimal surface $\mathrm{S}$, equal to 
either $\p^2$ or an Hirzebruch surface. 

Suppose first that $\mathrm{S}=\p^2$. Looking at the Jordan normal form, any automorphism of $\p^2$ is conjugate to 
\begin{itemize}
\item either $(x:y:z)\mapsto (\alpha x:\beta y:z)$, 
\item or $(x:y:z)\mapsto (\alpha x: y+z:z)$,
\item or $(x:y:z)\mapsto (x+y:y+z:z)$.
\end{itemize}
This latter automorphism is conjugate to $(x:y:z)\mapsto (x:y+z:z)$ in $\Bir(\p^2)$ (for instance by $(x:y:z)\dasharrow (xz-
\frac{1}{2}y(y-z):yz:z^2)$, as already observed in  \cite[Example~1]{Bl}). It remains to study the case of diagonal automorphisms to 
show the assertion on the kernel stated in the proposition. As in the proof of \cite[Proposition~6]{Bl}, we associate to  each diagonal 
automorphism $\psi\colon(x:y:z)\mapsto(\alpha x: \beta y: z)$ the kernel $\Delta_\psi$ of the following homomorphism of groups:
\begin{align*}
& \delta_{\psi}\colon \mathbb{Z}^2 \to\C^{*}, && (i,j) \mapsto \alpha^i\beta^j.
\end{align*}
For any $M=\left[\begin{array}{cc} a & b \\ c & d\end{array}\right]\in \GL(2,\z)$, we denote by $M(\psi)$ the diagonal automorphism 
$$(x:y:z)\mapsto \big(\alpha^a\beta^b x:\alpha^c\beta^d y:z\big),$$ which is the conjugate of $\psi$ by the birational map $(x,y)\dasharrow 
\big(x^ay^b,x^cy^d\big)$ (viewed in the chart $z=1$). We can check that
$$\delta_{M(\psi)}=\delta_{\psi} \circ \Mt,$$
which implies that $\Delta_{M(\psi)}=\Mt^{-1}(\Delta_\psi)$.
We can always choose $M$ (by a result on Smith's normal form) such that $\Delta_{M(\phi)}$ is generated by $k_1 e_1$ and $k_1k_2 e_2$, 
where $e_1$, $e_2$ are the canonical basis vectors of $\mathbb{Z}^2$, and $k_1$, $k_2$ are non-negative integers, and replace $\phi$ 
with $M(\phi)$, which is conjugate to it. Since $\phi$ and $M(\phi)$ have infinite order, we see that $k_2=0$, and get the assertion 
on the kernel stated in the proposition.\\

If $\mathrm{S}=\mathbb{F}_0=\p^1\times \p^1$, we can reduce to the case of $\p^2$ by blowing-up a fixed point and contracting the
 strict transform of the members of the two rulings passing through the point. \\

Suppose now that $\mathrm{S}=\mathbb{F}_n$ for $n\ge 2$. If $g$ fixes a point of $\mathbb{F}_n$ which is not on the exceptional section, 
we can blow-up the point and contract the strict transform of the fibre to go to $\mathbb{F}_{n-1}$. We can thus assume that all points 
of $\mathbb{F}_n$ fixed by $g$ are on the exceptional section. The action of $g$ on the basis of the fibration is, up to conjugation,
$x\mapsto \alpha x$ or $x\mapsto x+1$  for some $\alpha\in \C^{*}$. Removing the fibre at infinity and the exceptional section, we get 
$\C^2$, where the action of $g$ is
\begin{itemize}
\item either $(x,y)\mapsto (\alpha x,\beta y+Q(x))$,
\item or $(x,y)\mapsto (x+1,\beta y+Q(x))$,
\end{itemize} where $\alpha$, $\beta\in \C^{*}$ and $Q$ is a polynomial of degree $\le n$.
The action on the fibre at infinity is obtained by conjugating by $(x,y)\dasharrow \left(\frac{1}{x},\frac{y}{x^n}\right)$.

In the first case, there is no fixed point on the fibre at infinity  (except the point on the exceptional section) if and only if 
$\beta=\alpha^n$ and $\deg Q=n$. There is no fixed point on $x=0$ if and only if $Q(0)\not=0$ and $\beta=1$. This implies that 
$\alpha$ is a primitive $k$-th rooth of unity, where $k$ is a divisor of $n$. Conjugating by $(x,y+\gamma x^d)$ we replace  $Q(x)$ 
with $Q(x)+\gamma (\alpha^d-1)x^d$, so we can assume that the coefficient of $x^d$ is trivial if $d$ is not a multiple of $k$, which 
means that $Q(x)=P(x^k)$ for some polynomial $P\in \C[x]$. In particular, $g$ is equal to~$(x,y)\mapsto (\xi x,y+P(x^k))$ and is 
conjugate to $(x,y)\mapsto \big(\xi x,y+1\big)$ by $(x,y)\dasharrow \left(x,\frac{y}{P(x^k)}\right)$.

In the second case, there is no fixed point on $\C^2$, and no point on the fibre at infinity  if and only if $\beta=1$ and $\deg Q=n$. 
Conjugating $g$ by $(x,y)\mapsto (x,y+ \gamma x^{n+1})$ (which corresponds to performing an elementary link $\mathbb{F}_{n}\dasharrow 
\mathbb{F}_{n+1}$ at the unique fixed-point and then coming back with an elementary link at a general point of the fibre at infinity), 
we get $$(x,y)\mapsto \big(x+1,y-\gamma x^{n+1}+Q(x)+\gamma (x+1)^{n+1}\big).$$ Choosing the right element $\gamma\in \C$, we can decrease the 
degree of $Q(x)$, and get $(x,y)\mapsto (x+1,y)$ by induction.
\end{proof}

\subsection{Conjugacy classes of elliptic maps of infinite order}

Following \cite{Bl}, we will call  elements of the form $(x,y)\mapsto (\alpha x,\beta y)$, resp. $(x,y)\mapsto (\alpha x,y+1)$ 
\emph{diagonal} automorphisms, resp. \emph{almost-diagonal} automorphisms of $\C^2$ (or $\p^2)$. The conjugacy classes in each 
family are given by the following:

\begin{pro}[\cite{Bl}, Theorem 1]\label{Prop:ConjDiagAlmostDiag}
\begin{enumerate}
\item
A diagonal automorphism and an almost-diagonal automorphism of $\C^2$ are never conjugate in $\Bir(\C^2)$.
\item
Two diagonal automorphisms $(x,y)\mapsto (\alpha x,\beta y)$ and $(x,y)\mapsto (\gamma x,\delta y)$ are conjugate in $\Bir(\C^2)$ if 
and only if there exists $\left[\begin{array}{cc} a& b\\ c&d\end{array}\right]\in \GL(2,\z)$ such that $(\alpha^a\beta^b,\alpha^c
\beta^d)=(\gamma,\delta)$.
\item

Two almost diagonal automorphisms $(x,y)\mapsto (\alpha x,y+1)$ and $(x,y)\mapsto (\gamma x,y+1)$ are conjugate in~$\Bir(\C^2)$ if 
and only if $\alpha=\gamma^{\pm 1}$.\end{enumerate}
\end{pro}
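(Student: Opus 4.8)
The plan is to treat the three ``if'' directions by exhibiting explicit conjugators, and to reduce the three harder assertions---necessity in (2) and (3), and the non-conjugacy in (1)---to a conjugacy of automorphisms of a rational surface, which I then settle by an invariant built from invariant rational curves. For the ``if'' part of (2), the monomial map $\sigma_M\colon(x,y)\dashrightarrow(x^ay^b,x^cy^d)$ attached to $M\in\GL(2,\z)$ with rows $(a,b)$ and $(c,d)$ is birational with inverse $\sigma_{M^{-1}}$, and a one-line computation gives $\sigma_M\circ(\alpha x,\beta y)\circ\sigma_M^{-1}=(\alpha^a\beta^b\,x,\alpha^c\beta^d\,y)$---precisely the computation already used for the maps $M(\psi)$ in the proof of Proposition~\ref{Prop:ellipticinfiniteorder}. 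For the ``if'' part of (3) the birational involution $(x,y)\dashrightarrow(1/x,y)$ conjugates $(\alpha x,y+1)$ to $(\alpha^{-1}x,y+1)$; nothing is needed for (1). Using these monomial conjugations together with the Smith normal form argument of that same proof, I would also normalise each diagonal automorphism---without leaving its conjugacy class and without changing the $\GL(2,\z)$-orbit appearing in (2)---so that $\Delta_\phi$ is generated by $(k,0)$; then $\alpha$ is a root of unity (or there is no multiplicative relation at all) while $\beta$ is not a root of unity.

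Next I would reduce to a surface automorphism. Suppose $h\in\Bir(\C^2)=\Bir(\p^2)$ conjugates a normal form $\phi$ to another one $\psi$, both viewed as linear automorphisms of $\p^2$. Since $\phi$ is an automorphism, the indeterminacy locus of $h\phi=\psi h$ equals both $\phi^{-1}(\mathrm{Ind}(h))$ and $\mathrm{Ind}(h)$, so $\phi$ preserves the finite set $B$ of base-points (proper or infinitely near) of $h$, and likewise $\psi$ preserves those of $h^{-1}$. Blowing up $B$ yields $\pi\colon Z\to\p^2$ resolving $h$, and $\sigma:=h\circ\pi\colon Z\to\p^2$ contracts the base-points of $h^{-1}$; both $\phi$ and $\psi$ lift to automorphisms $\tilde\phi,\tilde\psi$ of $Z$, and the identity $\psi=\sigma\pi^{-1}\phi\pi\sigma^{-1}$ forces $\tilde\phi=\tilde\psi=:g$. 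I am thus reduced to the following data: a smooth projective rational surface $Z$, an infinite-order automorphism $g$ with finite action on $\Pic{Z}$, and two birational morphisms $\pi,\sigma\colon Z\to\p^2$ with $\pi g\pi^{-1}=\phi$ and $\sigma g\sigma^{-1}=\psi$.

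For (1) and (3) the relevant invariant is the set of \emph{parabolic} $g$-invariant rational curves on $Z$---the $g$-invariant rational curves on which $g$ acts with infinite order and a single fixed point---together with the multiplier of $g$ transverse to each of them at that fixed point. If $\phi$ is almost-diagonal, $Z$ carries at least one such curve, namely the strict transform (via $\pi$) of the invariant line $\{x=0\}$ on which $\phi$ acts by $y\mapsto y+1$; and, tracking the eigenvalues of $g$ at its fixed points through the successive blow-ups---all of them monomials in the single parameter $\alpha$, and, whenever one equals $1$, forming a pair $(\alpha^{\pm1},1)$---one finds that every parabolic $g$-invariant curve has transverse multiplier in $\{\alpha,\alpha^{-1}\}$. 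If instead $\phi$ is diagonal, then $g$ is locally diagonalisable at each of its fixed points, being an iterated blow-up of the (semisimple) map $\phi$ at fixed configurations, so its restriction to any invariant rational curve is diagonalisable or trivial---never parabolic; this settles (1). For (3), comparing the description obtained through $\pi$ with the one obtained through $\sigma$ gives $\{\alpha,\alpha^{-1}\}=\{\gamma,\gamma^{-1}\}$, i.e.\ $\alpha=\gamma^{\pm1}$.

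For (2) the analogous structure is that $g$ preserves every pencil $\{x^py^q=\text{const}\}$, acting on the base of the $(p,q)$-pencil by multiplication by $\alpha^p\beta^q$; the whole ``monomial structure'' is therefore the lattice $\z^2$ equipped with $\delta_\phi\colon(p,q)\mapsto\alpha^p\beta^q$, and one must show that $h$ carries this structure for $\phi$ onto that for $\psi$ through an element $M\in\GL(2,\z)$, which yields $\psi=h\phi h^{-1}=(\alpha^a\beta^b\,x,\alpha^c\beta^d\,y)$, whence $(\alpha^a\beta^b,\alpha^c\beta^d)=(\gamma,\delta)$. When $\alpha$ is not a root of unity ($\Delta_\phi=0$) this is clean: every $\phi$-orbit in $(\C^{*})^2$ is Zariski-dense, so all $g$-invariant curves on $Z$ are toric, $Z$ is a smooth toric surface, $g$ is a torus element, both $\pi$ and $\sigma$ are toric contractions to $\p^2$, and $h$ is monomial up to an irrelevant torus translation. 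The main obstacle---and the part requiring genuine work---is the case where $\alpha$ is a root of unity: a power of $\phi$ then fixes a coordinate line pointwise, the base-points of $h$ may lie at arbitrary points of that line, $Z$ need not be toric, and one has to analyse the $g$-invariant configuration of exceptional curves over that fixed line directly---using the finiteness of the $g$-action on $\Pic{Z}$, in the spirit of Proposition~\ref{Prp:P2Fn}, to control which contractions to $\p^2$ can occur and to verify that the comparison map stays monomial. This case distinction is carried out in full in \cite{Bl}.
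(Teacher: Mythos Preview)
The paper does not prove this proposition: it is quoted from \cite{Bl} (Theorem~1 there) and used as a black box, so there is no ``paper's own proof'' to compare against.

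Your outline is nonetheless a reasonable sketch of a direct argument. The lift to a common resolution $Z$ is correct; the one point that deserves a word is that the full base-point tree of $h$ (not just the proper base-points) is $\phi$-stable, which one obtains by iterating the equality $\phi^{-1}(\mathrm{Ind}(h))=\mathrm{Ind}(h)$ through the successive levels of the blow-up tower. The invariant you propose for (1) and (3)---parabolic $g$-invariant rational curves together with their transverse multipliers---does work, but two steps are under-argued. First, the implication ``$g$ locally diagonalisable at every fixed point $\Rightarrow$ no parabolic invariant curve'' is not valid as stated: a map can be diagonalisable at a fixed point and still restrict to a parabolic automorphism on an invariant curve through that point (think of $(u,v)\mapsto(u/(1-u),\lambda v)$ on $\{v=0\}$). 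The correct argument is to enumerate the $g$-invariant irreducible curves on $Z$---they are exactly the strict transforms of the three coordinate lines and the exceptional components over $\phi$-fixed (infinitely near) points---and to check directly that $g$ acts diagonalisably on each. Second, for almost-diagonal $\phi$ you should make explicit where the multiplier $\alpha^{-1}$ comes from: $\phi$ has two fixed points on $\p^2$, the point $(0{:}1{:}0)$ with diagonal differential $(\alpha,1)$, and $(1{:}0{:}0)$ with a Jordan block of eigenvalue $\alpha^{-1}$; the first blow-up at the latter produces the only other parabolic exceptional curve, with transverse multiplier $\alpha^{-1}$, and all subsequent fixed points are diagonalisable. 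With these two points filled in, (1) and (3) follow.

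For (2) your treatment of the case $\Delta_\phi=0$ is correct: every $\phi$-periodic point is toric, $Z$ is a smooth toric surface, and $h$ is monomial up to a torus translation. You are right that the root-of-unity case is the substantive one and that it requires a separate analysis; deferring it to \cite{Bl} is honest, but that paragraph as written is an outline rather than a proof.
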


\begin{cor}\label{Cor:Lin}
Let $\phi\in \Bir(\p^2)$ be an elliptic map which has infinite order. If $\phi^m$ is conjugate to $\phi^n$ in~$\Bir(\p^2)$ for some 
$m$, $n\in \mathbb{Z}$, $\vert m\vert \not=\vert n\vert$, then $\phi$ is conjugate to an automorphism of $\C^2$ of the form  
$(x,y)\mapsto (\alpha x,y+1)$, where $\alpha\in \C^{*}$ such that $\alpha^{m+n}=1$ or $\alpha^{m-n}=1$.
\end{cor}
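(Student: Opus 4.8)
The plan is to feed the classification of Proposition~\ref{Prop:ellipticinfiniteorder} into the conjugacy criterion of Proposition~\ref{Prop:ConjDiagAlmostDiag}, using throughout that $\Bir(\C^2)$ and $\Bir(\p^2)$ are canonically identified, so that conjugacy in the two groups is the same notion. Before anything, I would rule out $m=0$ and $n=0$: if $\phi^m=\id$, then $\phi^n$ is conjugate to $\id$, hence equals $\id$, and as $\phi$ has infinite order this gives $n=0$ too, contradicting $|m|\neq|n|$. So from now on $m\neq 0$ and $n\neq 0$.

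By Proposition~\ref{Prop:ellipticinfiniteorder}, after replacing $\phi$ by a conjugate we may assume that $\phi$ is an automorphism of $\p^2$ which on some open subset isomorphic to $\C^2$ is either the diagonal automorphism $(x,y)\mapsto(\alpha x,\beta y)$, with the kernel of $(i,j)\mapsto\alpha^i\beta^j$ equal to $\z(k,0)$, or the almost-diagonal automorphism $(x,y)\mapsto(\alpha x,y+1)$. I claim the diagonal case is impossible. There $\phi^m$ and $\phi^n$ are the diagonal automorphisms $(x,y)\mapsto(\alpha^m x,\beta^m y)$ and $(x,y)\mapsto(\alpha^n x,\beta^n y)$; since they are conjugate, Proposition~\ref{Prop:ConjDiagAlmostDiag}(2) provides $\left[\begin{array}{cc} a & b\\ c & d\end{array}\right]\in\GL(2,\z)$ with $\alpha^{ma-n}\beta^{mb}=1$ and $\alpha^{mc}\beta^{md-n}=1$. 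The first relation means $(ma-n,mb)\in\z(k,0)$, so $mb=0$ and hence $b=0$ as $m\neq 0$; then $ad=ad-bc=\pm 1$ forces $d=\pm 1$, and the second relation forces $md-n=0$, i.e.\ $n=\pm m$, contradicting $|m|\neq|n|$. Hence $\phi$ is conjugate to the almost-diagonal automorphism $(x,y)\mapsto(\alpha x,y+1)$.

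It remains to pin down $\alpha$. Here $\phi^t(x,y)=(\alpha^t x,y+t)$ for every $t\in\z$, and since $m$ and $n$ are nonzero, conjugating by the automorphism $(x,y)\mapsto(x,y/t)$ of $\C^2$ shows that $\phi^m$ and $\phi^n$ are conjugate, in $\Bir(\C^2)=\Bir(\p^2)$, to the almost-diagonal automorphisms $(x,y)\mapsto(\alpha^m x,y+1)$ and $(x,y)\mapsto(\alpha^n x,y+1)$ respectively. As $\phi^m$ and $\phi^n$ are conjugate by hypothesis, so are these two maps, and Proposition~\ref{Prop:ConjDiagAlmostDiag}(3) yields $\alpha^m=\alpha^{\pm n}$, that is $\alpha^{m-n}=1$ or $\alpha^{m+n}=1$, which is the desired conclusion.

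The whole argument is bookkeeping over Propositions~\ref{Prop:ellipticinfiniteorder} and~\ref{Prop:ConjDiagAlmostDiag}, so I do not expect a genuine obstacle; the one step calling for care is the exclusion of the diagonal case, where one must exploit the exact form $\z(k,0)$ of the kernel — not just that it is cyclic — to get $b=0$ and then $n=\pm m$, together with the small but essential remark that $m,n\neq 0$ so that the conjugating maps $(x,y)\mapsto(x,y/t)$ make sense.
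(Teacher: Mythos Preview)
Your proof is correct and follows essentially the same approach as the paper's: reduce via Proposition~\ref{Prop:ellipticinfiniteorder} to the diagonal or almost-diagonal case, exclude the diagonal case using Proposition~\ref{Prop:ConjDiagAlmostDiag}(2) together with the shape $\z(k,0)$ of the kernel, and in the almost-diagonal case apply Proposition~\ref{Prop:ConjDiagAlmostDiag}(3). You are slightly more explicit than the paper in two places --- the argument that $mn\neq 0$, and the normalization of $\phi^m$, $\phi^n$ to the form $(\alpha^t x,y+1)$ via $(x,y)\mapsto(x,y/t)$ before invoking part~(3) --- but these are details the paper leaves implicit rather than genuine differences in strategy.
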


\begin{proof}
Note that $mn\not=0$ since $\phi$ has infinite order. Then $\phi$ is conjugate to one of the two cases of 
Proposition~\ref{Prop:ellipticinfiniteorder}.

First of all, assume that up to conjugation $\phi$ is $(x,y)\mapsto (\alpha x,\beta y)$ and that the kernel $\Delta_\phi$ of 
the group homomorphism $\mathbb{Z}^2 \to \C^{*}$ given by $(i,j)\mapsto \alpha^i \beta^j$ is generated by $(k,0)$ for some $k 
\in \mathbb{Z}$.
Since $\phi^m$ and $\phi^n$ are conjugate there exists a matrix $$N=\left[\begin{array}{cc} a & b \\ c & d\end{array}\right]\in
\mathrm{GL}(2,\mathbb{Z})$$ such that $\Big((\alpha^m)^a(\beta^m)^b,(\alpha^m)^c(\beta^m)^d\Big)=(\alpha^n,\beta^n)$ 
(Proposition~\ref{Prop:ConjDiagAlmostDiag}). This means that $$\Big(\alpha^{ma-n}\beta^{mb},\alpha^{mc}\beta^{md-n}\Big)=(1,1),$$ 
so $(ma-n,mb)$, $(mc,md-n)$ belong to $\Delta_\phi$. In particular $mb=md-n=0$, which implies that $b=0$, so $ad=\pm 1$, which is impossible 
since $m\not=\pm n$. \\

Assume now that $\phi$ is  conjugate to $(x,y)\mapsto (\alpha x, y+1)$ for some~$\alpha$ in~$\C^*$.
The fact that $\phi^m$ and $\phi^n$ are conjugate implies that $\alpha^{m+n}=1$ or $\alpha^{m-n}=1$ 
(Proposition~\ref{Prop:ConjDiagAlmostDiag}).
\end{proof}

\subsection{Centralisers of elliptic maps of infinite order}

If $\phi$ is a birational map of $\mathbb{P}^2$, we will denote by $\mathrm{C}(\phi)$ the centraliser of $\phi$ in $\mathrm{Bir}
(\mathbb{P}^2)$: $$\mathrm{C}(\phi)=\left\{\psi\in\mathrm{Bir}(\mathbb{P}^2)\,\big\vert\,\phi\psi=\psi\phi\right\}.$$

In the sequel, we describe the centralisers of elliptic maps of infinite order of $\Bir(\p^2)$. The results are groups which contain 
the centralisers of some elements of $\PGL(2,\C)$. We recall the following  result, whose proof is an easy exercise. Recall that 
$\PGL(2,\C)$ is the group of automorphisms of $\p^1$, or equivalently the group of M\"obius transformations $x\dasharrow 
\frac{ax+b}{cx+d}$.

\begin{lem}
For any $\alpha\in \C^{*}$, we have $$\left\{\eta\in \PGL(2,\C)\, \Big\vert\, \eta(\alpha x)=\alpha\eta( x)\right\}=
\left\{\begin{array}{lcl}
\PGL(2,\C)&\mbox{ if }&\alpha=1\\
\{x\dasharrow \gamma x^{\pm 1}\, \vert\, \gamma \in \C^{*}\}& \mbox{ if }& \alpha=-1\\
\{x\mapsto \gamma x\, \vert\, \gamma \in \C^{*}\}& \mbox{ if }& \alpha\not=\pm 1\end{array}\right.$$
\end{lem}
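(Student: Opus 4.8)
The plan is to reduce the statement to an elementary comparison of coefficients. Write an arbitrary $\eta\in\PGL(2,\C)$ as a M\"obius transformation $\eta(x)=\frac{ax+b}{cx+d}$ with $ad-bc\neq 0$. The relation $\eta(\alpha x)=\alpha\eta(x)$ becomes $\frac{a\alpha x+b}{c\alpha x+d}=\frac{\alpha a x+\alpha b}{cx+d}$, and clearing denominators gives a polynomial identity in $x$ of degree at most $2$. Comparing the coefficients of $x^2$, $x^1$ and $x^0$, and dividing by $\alpha\neq 0$ where needed, one obtains exactly the three conditions $(1-\alpha)ac=0$, $(1-\alpha^2)bc=0$ and $(1-\alpha)bd=0$.

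From here the argument is a short case analysis. If $\alpha=1$, all three conditions are vacuous, so $\eta$ is arbitrary and the set equals $\PGL(2,\C)$. If $\alpha\neq 1$, the first and third conditions give $ac=0$ and $bd=0$; since $ad-bc\neq 0$ rules out $a=c=0$ and $b=d=0$, one is forced into one of two cases: either $b=c=0$, so $\eta\colon x\mapsto\gamma x$ with $\gamma=a/d\in\C^{*}$, or $a=d=0$, so $\eta$ is the inversion-type map $x\mapsto\gamma x^{-1}$ with $\gamma=b/c\in\C^{*}$. In the second case $bc\neq 0$, so the middle condition $(1-\alpha^2)bc=0$ forces $\alpha^2=1$ and hence, since $\alpha\neq 1$, $\alpha=-1$; conversely this case really does occur when $\alpha=-1$. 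Thus for $\alpha\neq\pm 1$ only the homotheties $x\mapsto\gamma x$ survive, while for $\alpha=-1$ one obtains in addition all maps $x\mapsto\gamma x^{-1}$.

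The reverse inclusions are immediate and require no computation: $x\mapsto\gamma x$ clearly satisfies $\eta(\alpha x)=\gamma\alpha x=\alpha\eta(x)$ for every $\alpha$, and when $\alpha=-1$ the map $x\mapsto\gamma x^{-1}$ satisfies $\eta(-x)=-\gamma x^{-1}=\alpha\eta(x)$. I do not expect any genuine obstacle here; the only point requiring care is the bookkeeping in the case split, namely systematically using $ad-bc\neq 0$ to discard the degenerate sub-cases and keeping track of where $\alpha=-1$ is really needed. Conceptually, the relation $\eta(\alpha x)=\alpha\eta(x)$ is the commutation $\eta\circ m_\alpha=m_\alpha\circ\eta$ with $m_\alpha\colon x\mapsto\alpha x$, so the lemma is simply the computation of the centraliser of $m_\alpha$ in $\PGL(2,\C)$, which is exactly the input needed for Lemmas~\ref{Lem:centraldiag} and~\ref{Lem:centtransl}.
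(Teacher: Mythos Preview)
Your argument is correct: the coefficient comparison yields exactly the three relations $(1-\alpha)ac=0$, $(1-\alpha^2)bc=0$, $(1-\alpha)bd=0$, and the case split you describe is sound (in particular, $ad-bc\neq 0$ together with $ac=0$ and $bd=0$ indeed forces either $b=c=0$ or $a=d=0$). The paper itself gives no proof of this lemma, declaring it ``an easy exercise''; your write-up is precisely the intended elementary verification of that exercise, so there is nothing to compare.
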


\begin{lem}\label{Lem:centraldiag}
Let us consider $\phi\colon(x,y)\mapsto (\alpha x,\beta y)$ where $\alpha$, $\beta$ are in $\mathbb{C}^*$, and where the kernel of 
the group homomorphism $\mathbb{Z}^2 \to \C^{*}$ given by $(i,j)\mapsto \alpha^i \beta^j$ is generated by $(k,0)$  for some 
$k\in \mathbb{Z}$. Then the centraliser of~$\phi$ in $\Bir(\p^2)$ is 
$$\mathrm{C}(\phi)=\left\{(x,y)\dashrightarrow (\eta (x),y R(x^k))\, \Big\vert\, R\in \C(x), \eta \in \PGL(2,\C), \eta(\alpha x)
=\alpha\eta(x)\right\}.$$
\end{lem}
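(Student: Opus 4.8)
The plan is to determine which birational maps $\psi\in\Bir(\p^2)$ commute with the diagonal map $\phi\colon(x,y)\mapsto(\alpha x,\beta y)$, in the chart $z=1$. First I would observe that $\phi$ preserves the pencil of lines through the point $[0:1:0]$ (the fibres $x=\text{const}$ of the first projection $\pi\colon(x,y)\mapsto x$), and that this is essentially the \emph{unique} such pencil: since $\phi$ is elliptic of infinite order with $\Delta_\phi$ generated by $(k,0)$, the second coordinate $\beta$ is not a root of unity, so the only $\phi$-invariant curves are vertical lines and their limits. Hence for any $\psi\in\mathrm{C}(\phi)$, the fibration $\pi$ must be sent to another $\phi$-invariant fibration, forcing $\psi$ to permute the vertical lines. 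Thus $\psi$ descends to an automorphism $\eta$ of the base $\p^1$, i.e.\ $\psi(x,y)=(\eta(x),f(x,y))$ for some $\eta\in\PGL(2,\C)$ and some rational function $f$; and since $\psi\phi=\phi\psi$ projects to $\eta(\alpha x)=\alpha\eta(x)$, we get the stated condition on $\eta$.

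Next I would analyse the ``vertical'' part. On a general fibre $x=c$, the map $\psi$ restricts to an isomorphism from the line $x=c$ to the line $x=\eta(c)$, hence is an affine (Möbius) map $y\mapsto \frac{a(x)y+b(x)}{c(x)y+d(x)}$ with coefficients rational in $x$. Writing out $\psi\phi=\phi\psi$ on the $y$-coordinate, $\phi$ acts on the target fibre by $y\mapsto\beta y$, while on the source it first multiplies $x$ by $\alpha$; comparing the two Möbius maps in $y$ gives a functional equation relating the coefficient functions evaluated at $x$ and at $\alpha x$. Since $\beta$ has infinite order, the only way a Möbius map can conjugate $y\mapsto\beta y$ to itself (up to the reparametrisation forced by $\eta$) is for it to be of the form $y\mapsto\gamma(x) y$; this is where the earlier classification of elliptic maps of infinite order is used to exclude degenerate possibilities. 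So $\psi(x,y)=(\eta(x),y\,\gamma(x))$ with $\gamma\in\C(x)^{*}$, and the commutation relation then reduces to $\gamma(\alpha x)=\gamma(x)$.

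It remains to identify $\{\gamma\in\C(x)^{*}\mid \gamma(\alpha x)=\gamma(x)\}$ with $\{R(x^{k})\mid R\in\C(x)\}$. If $\alpha$ is a primitive $k$-th root of unity this is a classical fact about invariants of the cyclic group action $x\mapsto\alpha x$ on $\C(x)$: the invariant subfield is $\C(x^{k})$. Here the hypothesis that $\Delta_\phi=\langle(k,0)\rangle$ is exactly what guarantees $\alpha$ has order $k$ (take $j=0$ in the kernel), so this applies. I would then check the reverse inclusion, namely that every map of the listed form $(x,y)\dashrightarrow(\eta(x),yR(x^{k}))$ with $\eta(\alpha x)=\alpha\eta(x)$ genuinely commutes with $\phi$, which is a direct substitution: $\phi\psi(x,y)=(\alpha\eta(x),\beta yR(x^k))$ and $\psi\phi(x,y)=(\eta(\alpha x),\beta y R((\alpha x)^k))=(\alpha\eta(x),\beta y R(x^k))$, using $\alpha^k=1$.

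The main obstacle I anticipate is the rigidity step in the second paragraph: ruling out that $\psi$ could act on a general fibre by a genuine Möbius transformation (with $c(x)\not\equiv 0$) or by an affine map $y\mapsto\gamma(x)y+\delta(x)$ with $\delta\not\equiv 0$ that still commutes with $\phi$. The clean way is to note that commuting with $\phi$ forces $\psi$ to permute not only the vertical fibration but also the two $\phi$-fixed points $[0:0:1]$ and $[0:1:0]$ on each relevant boundary (the ``horizontal'' sections $y=0$ and $y=\infty$), since these are characterised intrinsically as the locus of fixed points of $\phi$ restricted to the pencil; this pins down $\delta=0$ and $c=0$. One must be a little careful because $\eta$ may swap the two fixed points of $x\mapsto\alpha x$ when $\alpha=-1$, but the statement already builds this flexibility into the condition $\eta(\alpha x)=\alpha\eta(x)$, so no contradiction arises.
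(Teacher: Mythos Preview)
Your approach is geometric (via invariant pencils) whereas the paper's is purely algebraic, and your Step~1 contains a genuine gap. You claim that ``the only $\phi$-invariant curves are vertical lines,'' and deduce that the vertical pencil is essentially the unique $\phi$-invariant fibration. Both assertions fail in general. If $\alpha\neq 1$ (i.e.\ $k\neq 1$), the line $x=c$ with $c\neq 0$ is sent by $\phi$ to $x=\alpha c$, so it is \emph{not} $\phi$-invariant; and the horizontal pencil $y=\mathrm{const}$, as well as the pencil of lines through the origin, is a $\phi$-invariant fibration just as the vertical one is. Hence your deduction that a commuting $\psi$ must preserve the vertical pencil does not follow as written. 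A repair is possible---for $k>0$ one can pass to $\phi^{k}=(x,\beta^{k}y)$, whose invariant irreducible affine curves really are only the vertical lines together with $y=0$; alternatively one can note that the induced base actions on the three candidate pencils of lines ($x\mapsto\alpha x$, $y\mapsto\beta y$, $t\mapsto(\beta/\alpha)t$) are pairwise non-conjugate in $\PGL(2,\C)$ precisely because of the kernel hypothesis, so $\psi$ cannot permute them nontrivially---but neither fix appears in your text, and the second is needed anyway when $k=0$. The same confusion recurs in your final paragraph: ``$\phi$ restricted to the pencil'' only makes sense when $\phi$ preserves each fibre, which again requires $\alpha=1$.

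The paper bypasses all of this with a short algebraic argument. Writing $\psi_{i}=P_{i}/Q_{i}$ in lowest terms, the commutation relations $\psi_{1}(\alpha x,\beta y)=\alpha\psi_{1}(x,y)$ and $\psi_{2}(\alpha x,\beta y)=\beta\psi_{2}(x,y)$ force each of $P_{1},P_{2},Q_{1},Q_{2}$ to be an eigenvector of the linear operator $f\mapsto f(\alpha x,\beta y)$ on $\C[x,y]$. The kernel hypothesis says two monomials $x^{a}y^{b}$ and $x^{a'}y^{b'}$ share an eigenvalue iff $b=b'$ and $a\equiv a'\pmod k$, so each eigenspace has the form $x^{a}y^{b}\,\C[x^{k}]$; matching eigenvalues then gives $\psi_{1}=x\,R_{1}(x^{k})$ and $\psi_{2}=y\,R_{2}(x^{k})$ directly. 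Birationality of $\psi$ forces $\psi_{1}\in\PGL(2,\C)$. Your rigidity step (excluding affine or genuine M\"obius fibrewise maps) and the identification of $\alpha$-invariant rational functions with $\C(x^{k})$ are both absorbed into this single eigenvector computation, with no need to single out a fibration first.
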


\begin{proof}
Let $\psi\colon(x,y)\dashrightarrow(\psi_1(x,y),\psi_2(x,y))$ be an element of $\mathrm{C}(\phi)$. The fact that $\psi$ commutes with $\phi$ is equivalent to 
\begin{align*}
& (\star)\,\,\,\psi_1(\alpha x,\beta y)=\alpha\psi_1(x,y) && \text{and} && (\diamond)\,\,\,\psi_2(\alpha x,\beta y)=\beta\psi_2(x,y).
\end{align*}
Writing $\psi_i=\frac{P_i}{Q_i}$ for $i=1,2$, where $P_i$, $Q_i$ are polynomials without common factors, we see that $P_1,P_2,Q_1,Q_2$ 
are eigenvectors of the linear automorphism $\phi^{*}$ of the $\C$-vector space $\C[x,y]$ given by $\phi^{*}\colon f(x,y)\mapsto 
f(\alpha x,\beta y)$. This means that each of the $P_i$, $Q_i$ is a product of a monomial in $x,y$ with an element of $\C[x^k]$. 
Using $(\star)$ and~$(\diamond)$, we get the existence of $R_1$, $R_2\in \C(x)$ such that 
\begin{align*}
&\psi_1(x,y)=x R_1(x^k),&&\psi_2(x,y)=yR_2(x^k).
\end{align*}
The fact that $\psi$ is birational implies that $\psi_1(x,y)$ is an element $\eta(x)\in\PGL(2,\C)$; it satisfies $\eta(\alpha x)=\alpha \eta(x)$ because of $(\star)$.
\end{proof}

\begin{lem}\label{Lem:centtransl}
Let us consider $\phi\colon(x,y)\mapsto (\alpha x,y+\beta)$ where $\alpha,\beta\in\mathbb{C}^*$.
The centraliser of $\phi$ in $\Bir(\mathbb{P}^2)$ is equal~to $$\mathrm{C}(\phi)=\left\{(x,y)\dashrightarrow(\eta(x),y+R(x))\,\big\vert\,\eta\in \PGL(2,\C), \eta(\alpha x)=\alpha \eta(x), R\in \C(x), R(\alpha x)=R(x)\right\}.$$
\end{lem}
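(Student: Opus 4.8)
The plan is to mimic exactly the proof of Lemma \ref{Lem:centraldiag}, replacing the multiplicative structure in the second coordinate by an additive one. Let $\psi\colon(x,y)\dashrightarrow(\psi_1(x,y),\psi_2(x,y))$ be an element of $\mathrm{C}(\phi)$. First I would translate the commutation relation $\psi\phi=\phi\psi$ into the two functional equations
\begin{align*}
& (\star)\ \ \psi_1(\alpha x,y+\beta)=\alpha\,\psi_1(x,y), && (\diamond)\ \ \psi_2(\alpha x,y+\beta)=\psi_2(x,y)+\beta.
\end{align*}
The key structural point is that the substitution $f(x,y)\mapsto f(\alpha x,y+\beta)$ is \emph{not} diagonalizable on $\C[x,y]$ (it is unipotent in the $y$-direction), so the eigenvector argument of the previous lemma must be adapted. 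Instead I would argue that $\psi_1$ and $\psi_2$ must be independent of $y$: considering the automorphism $\sigma\colon f(x,y)\mapsto f(\alpha x,y+\beta)$ of the field $\C(x,y)$, its fixed field is $\C(x)$ when $\alpha$ is a root of unity, and is smaller (it does not contain $y$ in general) — but the point is that an element of $\C(x,y)$ satisfying $\sigma(g)=g+c$ for a constant $c$ differs from $\frac{c}{\beta}y$ by a $\sigma$-invariant element, and a $\sigma$-invariant \emph{rational} function, being fixed by a map of infinite order on the curve $\{x=\mathrm{const}\}$ for generic $x$ when $\alpha$ is not a root of unity (or by translation by $\beta$ when one views the $y$-line), is forced to lie in $\C(x)$; similarly $\sigma(h)=\alpha h$ forces $h=x\cdot(\text{$\sigma$-invariant})\in x\,\C(x)$. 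Hence $\psi_1(x,y)=x\,S(x)$ and $\psi_2(x,y)=\frac{1}{\beta}\big(y - y_0(x)\big)\cdot\beta + (\text{invariant})$; more cleanly, writing the general solution of $(\diamond)$ as $\psi_2 = y + R(x)$ where $R\in\C(x)$ must satisfy $R(\alpha x)=R(x)$ coming from $(\diamond)$, and the general solution of $(\star)$ as $\psi_1 = \eta(x)$ with $\eta\in\C(x)$ satisfying $\eta(\alpha x)=\alpha\eta(x)$.

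Next I would use that $\psi$ is birational. Since $\psi_1$ and $\psi_2$ are independent of $y$ and $\psi_2$ is affine in $y$ with leading coefficient $1$, the map $\psi$ sends the pencil of vertical lines $x=\mathrm{const}$ to itself, and on the base of this pencil it acts as $x\dashrightarrow\psi_1(x)$; birationality of $\psi$ forces $\psi_1\in\PGL(2,\C)$, i.e.\ $\psi_1=\eta(x)$ is a M\"obius transformation. The relation $(\star)$ then reads $\eta(\alpha x)=\alpha\eta(x)$, and $(\diamond)$ gives $R(\alpha x)=R(x)$ with $R\in\C(x)$ arbitrary subject to this constraint. This shows $\mathrm{C}(\phi)$ is contained in the claimed set. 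The reverse inclusion is a direct check: any $(x,y)\dashrightarrow(\eta(x),y+R(x))$ with $\eta(\alpha x)=\alpha\eta(x)$ and $R(\alpha x)=R(x)$ commutes with $\phi$, by plugging into $(\star)$ and $(\diamond)$.

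The main obstacle — and the only place where real care is needed — is justifying that $\psi_1,\psi_2$ cannot genuinely depend on $y$, since the substitution operator is no longer semisimple. The clean way to handle this is: an element $g\in\C(x,y)$ with $g(\alpha x,y+\beta)=g(x,y)$ is, for each fixed generic value of $x$, a rational function of $y$ invariant under $y\mapsto y+\beta$; a nonconstant rational function of one variable has only finitely many values achieved with given multiplicity, so invariance under an infinite-order translation forces it to be constant in $y$, hence $g\in\C(x)$ — and this is exactly the content one needs. Applying this to $g=\psi_1/(x\,\eta_0)$ for a particular solution $\eta_0$ of $(\star)$ and to $g=\psi_2-y/?$ after subtracting a particular solution of $(\diamond)$ (namely $y$ itself works since $y+\beta = y + \beta$), one reduces both coordinates to elements of $\C(x)$. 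I expect the write-up to be two or three lines longer than that of Lemma \ref{Lem:centraldiag} precisely because of this extra invariance argument, but no new idea is required.
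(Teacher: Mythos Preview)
Your reduction—dividing $\psi_1$ by $x$ and subtracting $y$ from $\psi_2$ so that both problems become ``show that a $\sigma$-invariant element of $\C(x,y)$ lies in $\C(x)$''—is a clean strategy, and if it goes through it is tidier than the paper's route (the paper differentiates $(\diamond)$ to reach $\psi_2=ay+B(x)$ and then runs a case split on whether $\alpha$ is a root of unity to force $a=1$). But your justification of the invariance step is wrong as written. You assert that $g(\alpha x,y+\beta)=g(x,y)$ makes $g(x_0,\cdot)$ invariant under $y\mapsto y+\beta$ for generic fixed $x_0$; it does not, because $\sigma$ sends the line $\{x=x_0\}$ to $\{x=\alpha x_0\}$, not to itself. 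Iterating only gives $g(\alpha^n x_0,y+n\beta)=g(x_0,y)$, which returns to the same fibre precisely when $\alpha$ has finite order $k$—and then indeed $g(x_0,y+k\beta)=g(x_0,y)$ and your translation argument works. When $\alpha$ has infinite order the fibrewise picture is simply unavailable and a genuinely different argument is required: for instance, write $g=P/Q$ with $P,Q\in\C[x,y]$ coprime, deduce $P(\alpha x,y+\beta)=cP(x,y)$ for some $c\in\C^*$, and compare the top two coefficients in $y$ to force $\deg_y P=0$.

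The paper does not prove this inline either; it invokes \cite[Lemma~2]{Bl} for exactly this fact (applied to $\psi_1$ directly, and to the partial derivatives of $\psi_2$). So your plan is fine, but the sentence ``for each fixed generic value of $x$, a rational function of $y$ invariant under $y\mapsto y+\beta$'' is false and must be replaced by an honest argument covering the case where $\alpha$ is not a root of unity.
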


\begin{proof}Conjugating by $(x,y)\mapsto (x,\beta y)$, we can assume that $\beta=1$.

Let $\psi\colon(x,y)\dashrightarrow(\psi_1(x,y),\psi_2(x,y))$ be a birational map of $\mathbb{P}^2$ which commutes with $\phi$. One has 
\begin{align*}
&(\star)\,\,\, \psi_1(\alpha x,y+1)=\alpha\psi_1(x,y) && \text{and} &&(\diamond)\,\,\,\psi_2(\alpha x,y+1)=\psi_2(x,y)+1.
\end{align*}

\smallskip

Equality $(\star)$ implies that $\psi_1$ only depends on $x$ (\emph{see} \cite[Lemma~2]{Bl}). Therefore $\psi_1$ is an element of~$\PGL(2,\C)$ which commutes with $x\mapsto \alpha x$. 

Equality $(\diamond)$ implies that 
\begin{align*}
&\frac{\partial\psi_2}{\partial y}(\alpha x,y+1)=\frac{\partial\psi_2}{\partial y}(x,y)&& \text{and} &&\frac{\partial\psi_2}{\partial x}
(\alpha x,y+1)=\alpha^{-1}\frac{\partial\psi_2}{\partial x}(x,y),
\end{align*}
which again means that $\frac{\partial\psi_2}{\partial x}(x,y)$ and $\frac{\partial\psi_2}{\partial y}(x,y)$ only depend on $x$. The second component of $\psi$ can thus be written $ay+B(x)$, where $a\in \C^{*}$, $B\in \C(x)$. Replacing this form in $(\diamond)$, we get $$B(\alpha x)=B(x)+1-a,$$ which implies that $\frac{\partial B}{\partial x}(\alpha x)=\alpha^{-1}\frac{\partial B}{\partial x}(x),$ and thus that $x\frac{\partial B}{\partial x}(x)$ is invariant under $x\mapsto \alpha x$. 

If  $\alpha$ is not a root of unity, this means that $\frac{\partial B}{\partial x}=c/x$ for some $c\in \C$; since $B$ is a rational function, one gets~$c=0$ and  $B$ is a constant (or equivalently an element such that $B(\alpha x)=B(x)$). It implies moreover $a=1$ 
and we are done.
 
 If $\alpha$ is a primitive $k$-th root of unity, the fact that $\psi\colon (x,y)\dasharrow (\eta(x),a y+B(x))$ commutes with $$\phi^k\colon (x,y)\dasharrow(x,y+k)$$ yields $a(y+k)+B(x)=ay+B(x)+k$, so $a=1$. We again get $B(\alpha x)=B(x)$.\end{proof}

\section{On the growth of the number of base-points}\label{Sec:Basepointgrowth}
If $\mathrm{S}$ is a projective smooth surface, any element $\phi\in \Bir(\mathrm{S})$ has a finite number of base-points, which can belong to $\mathrm{S}$ or be infinitely near. We denote by $\b(\phi)$ the number of such points. We will call the number $$\mu(\phi)=\lim\limits_{k\to +\infty} \frac{\b(\phi^k)}{k},$$ the \emph{dynamical number of base-points of $\phi$}. Since $\b(\phi\psi)\le \b(\phi)+\b(\psi)$ for any $\phi$, $\psi\in 
\Bir(\mathrm{S})$, we see that~$\mu(\phi)$ is a non-negative real number. Moreover, $\b(\phi^{-1})$ and $\b(\phi)$ being always equal, we get $\mu(\phi^k)=\vert k\cdot \mu(\phi)\vert$ for any $k\in \mathbb{Z}$.

In this section, we precise the properties of this number, and will in particular see that it is an integer.\\

If $\phi\in \Bir(\mathrm{S})$ is a birational map, we will say that a (possibly infinitely near) base-point $p$ of $\phi$ is a {\it persistent base-point} if there exists an integer $N$ such that $p$ is a base-point of $\phi^k$ for any $k\geq N$ but is not a base-point of $\phi^{-k}$ for any $k\ge N$. \\

If $p$ is a point of $\mathrm{S}$ or a point infinitely near, which is not a base-point of $\phi\in \Bir(\mathrm{S})$, we define 
a point~$\phi^\bullet (p)$, which will also be a point of $\mathrm{S}$ or a point infinitely near. For this, take a minimal 
resolution 
$$
\xymatrix{& \mathrm{Z}\ar[rd]^{\pi_2}\ar[ld]_{\pi_1}&\\
\mathrm{S}\ar@{-->}[rr]_{\phi}&&\mathrm{S},
}$$
where $\pi_1$, $\pi_2$ are sequences of blow-ups. Because $p$ is not a base-point of $\phi$, it corresponds, via $\pi_1$, to a point 
of $\mathrm{Z}$ or infinitely near. Using $\pi_2$, we view this point on $\mathrm{S}$, again maybe infinitely near, and call it 
$\phi^\bullet(p)$.

\begin{rem}
If $p$ is not a base-point of $\phi\in \Bir(\mathrm{S})$ and $\phi(p)$ is not a base-point of $\psi\in \Bir(\mathrm{S})$, we have 
$(\psi\phi)^{\bullet}(p)=\psi^\bullet(\phi^\bullet(p))$. If $p$ is a general point of $\mathrm{S}$, then $\phi^{\bullet}(p)=
\phi(p)\in S$.
\end{rem}

\begin{eg} If  $\mathrm{S}=\p^2$, $p=(1:0:0)$ and $\phi$ is the birational map $(x:y:z)\dasharrow (yz+x^2:xz:z^2),$
the point $\phi^{\bullet}(p)$ is not equal to $p=\phi(p)$, but is infinitely near to it.
\end{eg}

Using this definition, we put an equivalence class on the set of points that belong to $\mathrm{S}$ or are infinitely near, by saying 
that $p$ is {\it equivalent} to $q$ if there exists an integer $k$ such that $(\phi^k)^{\bullet}(p)=q$ (this implies that $p$ is not 
a base-point of~$\phi^k$ and that $q$ is not a base-point of $\phi^{-k}$). The set of equivalence classes is the generalisation of 
the notion of set of orbits for birational maps.

\begin{pro}\label{Pro:MuPersists}
Let $\phi$ be a birational map of a smooth projective surface $\mathrm{S}$. Denote by $\nu$ the number of equivalence classes of 
persistent base-points of $\phi$. Then, the set
$$\left\{\b(\phi^k)- {\nu}k\ \Big\vert\ k\ge 0\right\}\subset \mathbb{Z}$$ is bounded. 
 
In particular, $\mu(\phi)$ is an integer, equal to $\nu$.
\end{pro}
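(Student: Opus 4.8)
The plan is to analyze how base-points of $\phi^k$ evolve with $k$ by tracking, for each base-point, whether it is ``persistent'' in the sense defined above, and to show that the count $\b(\phi^k)$ equals $\nu k$ plus a bounded error. First I would set up the bookkeeping: for a fixed large $k$, decompose the set of base-points of $\phi^k$ according to the equivalence relation $p \sim (\phi^j)^\bullet(p)$, and observe that each base-point $p$ of $\phi^k$ lies in the $\phi$-orbit (in the generalized sense) of some base-point of $\phi$ itself, since a resolution of $\phi^k$ can be built by composing resolutions of the factors $\phi$. Thus there are only finitely many equivalence classes meeting the base-locus of any $\phi^k$, namely those represented by base-points of $\phi$ or of $\phi^{-1}$; call this finite set of classes $\mathcal{C}$.

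Next, for each class $C \in \mathcal{C}$, I would count the contribution $n_k(C) = \#\{\text{base-points of }\phi^k\text{ lying in }C\}$ and show that $n_k(C) = k + O(1)$ if $C$ is a class of persistent base-points, and $n_k(C) = O(1)$ otherwise. The key mechanism: writing $\phi^k = \phi \circ \phi^{k-1}$, a point $p$ is a base-point of $\phi^k$ roughly when either $p$ is a base-point of $\phi^{k-1}$, or $(\phi^{k-1})^\bullet(p)$ is a base-point of $\phi$ (with the usual caveats about cancellation when a point is blown up by one factor and blown down by the next). For a persistent base-point $p$ with threshold $N$, the point $p$ is a base-point of every $\phi^j$, $j\ge N$, and never of any $\phi^{-j}$, $j \ge N$; following the orbit one sees that exactly one ``new'' base-point in the class $C$ gets added each time $k$ increases by $1$ (the base-point of $\phi$ itself feeding into the tail of the orbit), while the finitely many points near the two ends of the orbit segment contribute only a bounded correction. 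For a non-persistent class, every base-point of $\phi$ in that class is eventually cancelled after finitely many iterations, so its total contribution stays bounded; here I would want a uniform bound, which follows because there are finitely many base-points of $\phi$ and each is cancelled (if ever) after a uniformly bounded number of steps, or else reappears in a periodic pattern that again contributes $O(1)$.

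Summing over the finitely many classes in $\mathcal{C}$ gives $\b(\phi^k) = \nu k + O(1)$, which is exactly the boundedness of $\{\b(\phi^k) - \nu k\}$; dividing by $k$ and letting $k \to \infty$ yields $\mu(\phi) = \nu$, and since $\nu$ is a non-negative integer (it counts equivalence classes), $\mu(\phi) \in \mathbb{N}$. The main obstacle I anticipate is the careful treatment of the cancellation phenomenon: when one forms $\phi^k = \phi\circ\phi^{k-1}$, a base-point of $\phi^{k-1}$ may disappear because its image is a point that $\phi$ ``creates'' (i.e.\ $\phi^{-1}$ has it as a base-point), and conversely new base-points appear; making precise that the net effect is ``$+1$ per persistent class, bounded otherwise'' requires a clean lemma relating $\b(\psi_1\psi_2)$ to $\b(\psi_1) + \b(\psi_2)$ minus twice the number of common base-points of $\psi_2^{-1}$ and $\psi_1$, and then tracking these common base-points along the iteration. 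The persistence condition is precisely what guarantees that, past the threshold $N$, no such cancellation occurs for the points in a persistent class, which is why the count grows linearly with slope exactly $\nu$.
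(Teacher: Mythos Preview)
Your overall strategy coincides with the paper's: reduce to finitely many equivalence classes (those of base-points of $\phi$), and show each class contributes $k+O(1)$ if persistent and $O(1)$ otherwise. Where you diverge is in the counting mechanism. You propose to track the recursion $\phi^k=\phi\circ\phi^{k-1}$ and control the cancellation that occurs when a base-point of $\phi$ coincides with one of $(\phi^{k-1})^{-1}$. The paper instead bypasses the recursion entirely: for a non-periodic base-point $p$ of $\phi$, it observes that the base-points of $\phi^k$ equivalent to $p$ are exactly the points $(\phi^i)^\bullet(p)$ for $i$ in the index set
\[
I_{p,k}=\{\,i\in\mathbb{Z}\mid p\text{ is not a base-point of }\phi^i\text{ but is a base-point of }\phi^{i+k}\,\},
\]
so that $m_{p,k}=\#I_{p,k}$. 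One then classifies $p$ into four types $\mathcal{B}_{\pm\pm}$ according to whether $p$ is eventually a base-point of $\phi^j$ and of $\phi^{-j}$ for large $j$; the set $I_{p,k}$ is immediately bounded (or bounded away from $k$) by elementary interval arithmetic in each case, with $\mathcal{B}_{+-}$ (the persistent case) giving $|m_{p,k}-k|\le 2N$ and the other three giving $m_{p,k}\le 2N$ or $m_{p,k}=0$. This avoids all the cancellation bookkeeping you anticipate as the main obstacle.

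Two specific points to watch in your version. First, the paper separates out the \emph{periodic} base-points (those with $(\phi^k)^\bullet(q)=q$ for some $k\ne 0$, or those that are base-points of every $\phi^k$, $k\ne 0$) and bounds their contribution trivially; your sketch alludes to ``periodic patterns'' but does not isolate this case. Second, your proposed lemma $\b(\psi_1\psi_2)=\b(\psi_1)+\b(\psi_2)-2\cdot\#(\text{common base-points of }\psi_2^{-1}\text{ and }\psi_1)$ is not correct as stated: the coefficient $2$ is wrong, and the actual relation between $\b(\psi_1\psi_2)$ and the separate data is more delicate when infinitely near points are involved. The index-set parametrisation $I_{p,k}$ is precisely what lets one avoid ever needing such a formula.
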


\begin{proof}
Let us say that a base-point $q$ is periodic if $(\phi^k)^\bullet(q)=q$ for some $k\not=0$, or if $q$ is a base-point of $\phi^k$ for 
any $k\in\mathbb{Z}\setminus\{0\}$ (which implies that $(\phi^k)^\bullet(q)$ is never defined for $k\not=0$). Let us denote by $\mathcal{P}$ the set of periodic 
base-points of $\phi$ and by $\widehat{\mathcal{P}}$ the finite set of points equivalent to a point of $\mathcal{P}$.

The number of base-points of $\phi$ and $\phi^{-1}$ being finite, there exists an integer $N$ such that for any non-periodic base-point 
$p$ and for any $j$, $j'\ge N$, $p$ is a base-point of $\phi^j$ (respectively of  $\phi^{-j}$) if and only if $p$ is a base-point of 
$\phi^{j'}$ (respectively of $\phi^{-j'}$).
 
We decompose the set of non-periodic base-points of $\phi$ into four sets:
$$\begin{array}{ll}
\mathcal{B}_{++}=& \left\{p\ \vert\ p \mbox{ is a base-point of } \phi^j,  \mbox{and  is a base-point of } \phi^{-j} \mbox{ for }
j\ge N\right\},\\
\mathcal{B}_{+-}=& \left\{p\ \vert\ p \mbox{ is a base-point of } \phi^j,  \mbox{but  is not a base-point of } \phi^{-j} \mbox{ for }
j\ge N\right\},\\
\mathcal{B}_{-+}=& \left\{p\ \vert\ p \mbox{ is not a base-point of } \phi^j,  \mbox{but  is a base-point of } \phi^{-j} \mbox{ for }
j\ge N\right\},\\
\mathcal{B}_{--}=& \left\{p\ \vert\ p \mbox{ is not a base-point of } \phi^j,  \mbox{and  is not a base-point of } \phi^{-j} \mbox{ for }
j\ge N\right\}.
\end{array}$$
Note that $\mathcal{B}_{+-}$ is the set of persistent base-points of $\phi$ and that $\mathcal{B}_{-+}$ is the set of persistent 
base-point of $\phi^{-1}$. 
This decomposes the set of base-points of $\phi$ into five disjoint sets. Two base-points $p$, $p'$ of $\phi$ which are equivalent 
belong to the same set.

We fix an integer $k\ge 2N$ and compute the number of base-points of $\phi^k$.  Any such base-point being equivalent to a base-point of 
$\phi$, we take a base-point $p$ of $\phi$, and count the number $m_{p,k}$ of base-points of $\phi^k$ which are equivalent to $p$. 

If $p$ belongs to $\mathcal{P}$, the number of points equivalent to $p$ is less than $\#(\widehat{\mathcal{P}})$, so $m_{p,k}\le 
\#(\widehat{\mathcal{P}})$.

If $p$ is not in $\mathcal{P}$, any point equivalent to $p$ is equal to $(\phi^{i})^\bullet(p)$ for some $i$, and all are distinct, so 
we have
$$m_{p,k}=\# I_{p,k}, \mbox{ where }  I_{p,k}=\left\{i\in \mathbb{Z}\ \Big\vert \  p \mbox{ is not a base-point of }\phi^i, \mbox{ but }
 p \mbox{ is a base-point of }\phi^{i+k}\right\}.$$
 
If $p\in \mathcal{B}_{++}$, since $p$ is not a base-point of $\phi^i$ one has $-N< i< N$, thus $m_{p,k}< 2N$.

A point $p\in \mathcal{B}_{--}$ is a base-point of $\phi^{i+k}$ hence $-N<i+k<N$ and $m_{p,k}<2N$.

If $p\in \mathcal{B}_{-+}$, the fact that $p$ is not a base-point of $\phi^i$ implies that $-N<i$ and the fact that $p$ is a base-point 
of $\phi^{i+k}$ implies that $i+k\le N$. With these two inequalities one has $-N<i\le N-k$. Since $k>2N$, we get $m_{p,k}=0$.

If $p\in \mathcal{B}_{+-}$, the fact that $p$ is not a base-point of $\phi^i$ implies that $i<N$ and the fact that $p$ is a base-point 
of $\phi^{i+k}$ implies that $-N<i+k$. This yields $-N-k<i<N$, so $m_{p,k}\le 2N+k$. Conversely, if $i\le -N$ and $i+k\ge N$, $p$ is not 
a base-point of $\phi^i$, but  $p$ is a base-point of $\phi^{i+k}$ (or equivalently $i\in I_{p,k}$), so~$m_{p,k}\ge \#[N-k,-N]=k-2N+1$. 
The two conditions together imply that $m_{p,k}-k\in [-2N,2N]$.

Recall that $\mathcal{B}_{+-}$ is the set of persistent base-points of $\phi$. The above counting explains that the number of base-points of $\phi^k$, for $k$ big, behaves like $\nu k$, where $\nu$ is the set of equivalence classes of of persistent base-points of $\phi$. More precisely, there exist two integers $c$, $d$ which do not depend on $k$, such that the total number of base-points of~$\phi^k$ is  between 
$\nu k+c$ and $\nu k+d$, for any $k\ge 2N$. Recalling that 
$\mu(\phi)=\lim\limits_{k\to +\infty} \frac{\b(\phi^k)}{k}$, where $\b(\phi^k)$ is the number of base-points of $\phi^k$, we obtain $\mu(\phi)=\nu$.
\end{proof}

 \begin{cor}\label{Cor:MuInvariant}
 The  dynamical number of base-points  is an invariant of conjugation: if  $\theta\colon \mathrm{S}\dasharrow \mathrm{Z}$ is a birational map between 
smooth projective surfaces and $\phi\in\Bir(\mathrm{S})$, then
 $$\mu (\phi)=\mu(\theta\phi\theta^{-1}).$$

In particular, if $\phi$ is conjugate to an automorphism of a smooth projective surface, then $\mu(\phi)=0$.
 \end{cor}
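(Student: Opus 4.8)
The plan is to deduce the conjugacy invariance directly from Proposition \ref{Pro:MuPersists}, which already identifies $\mu(\phi)$ with the integer $\nu$ counting equivalence classes of persistent base-points. The key observation is that the relevant notions transport cleanly under a fixed birational conjugation $\theta\colon \mathrm{S}\dasharrow \mathrm{Z}$. First I would note that $\theta$ has only finitely many base-points and finitely many "exceptional" points (base-points of $\theta^{-1}$), so there is a cofinite set of points where $\theta^{\bullet}$ and $(\theta^{-1})^{\bullet}$ are defined and mutually inverse. Writing $\psi = \theta\phi\theta^{-1}$, one has $\psi^k = \theta\phi^k\theta^{-1}$, and using the composition rule $(\alpha\beta)^{\bullet}(p) = \alpha^{\bullet}(\beta^{\bullet}(p))$ (valid whenever the points involved are not base-points) one checks that, away from the finite bad locus, $\theta^{\bullet}$ sends an orbit-equivalence class of $\phi$ to an orbit-equivalence class of $\psi$.

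The main point is then to compare the \emph{persistent} base-points. I would argue that $q\in \mathrm{Z}$ (or infinitely near) is a persistent base-point of $\psi$ if and only if the corresponding point $p = (\theta^{-1})^{\bullet}(q)$ is a persistent base-point of $\phi$, \emph{up to finitely many exceptions}. Indeed, for $k$ large, the base-points of $\psi^k = \theta\phi^k\theta^{-1}$ differ from the $\theta$-image of the base-points of $\phi^k$ only by a bounded number of points, those coming from the base-points of $\theta$ and of $\theta^{-1}$ (this is the standard estimate $\b(\alpha\beta)\le \b(\alpha)+\b(\beta)$ applied to the two factorizations $\psi^k=\theta\cdot(\phi^k\theta^{-1})$ and $\phi^k\theta^{-1}=\phi^k\cdot\theta^{-1}$, together with the symmetric bound for $\psi^{-k}$). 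Hence the condition "is a base-point of $\psi^k$ for all $k\ge N$ but of no $\psi^{-k}$ for $k\ge N$" matches, outside a finite set independent of $k$, the pullback under $\theta$ of the same condition for $\phi$.

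Since $\mathcal{B}_{+-}$ for $\phi$ and for $\psi$ thus agree up to a finite correction, and since $\theta^{\bullet}$ induces a bijection between their equivalence classes (again up to finitely many classes), the integers $\nu(\phi)$ and $\nu(\psi)$ are equal: a finite discrepancy in a count of \emph{classes} that is already known to be finite forces equality once one recalls that $\nu$ is recovered as $\lim_k \b((\cdot)^k)/k$, a quantity insensitive to bounded perturbations. Therefore $\mu(\phi)=\nu(\phi)=\nu(\psi)=\mu(\theta\phi\theta^{-1})$. The last sentence is immediate: if $\phi$ is conjugate to an automorphism $g$ of a smooth projective surface, then $g$ has no base-points at all, so $\b(g^k)=0$ for all $k$, giving $\mu(g)=0$ and hence $\mu(\phi)=0$.

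The step I expect to be the main obstacle is making precise the claim that $\theta^{\bullet}$ is "almost" a bijection on equivalence classes and that it "almost" preserves persistence — one must be careful that an orbit of $\phi$ may pass through a base-point of $\theta$ at finitely many times, temporarily leaving the domain where $\theta^{\bullet}$ is defined, yet the orbit as an equivalence class is still well matched on a cofinite subset. Rather than tracking individual points, the cleanest route is to bypass $\theta^{\bullet}$ entirely and argue purely with the asymptotics: show $|\b(\psi^k)-\b(\phi^k)|$ is bounded in $k$ using only subadditivity of $\b$ and the fixed finite numbers $\b(\theta)$, $\b(\theta^{-1})$, and then invoke $\mu(\phi)=\lim_k \b(\phi^k)/k$ directly. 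This avoids all point-chasing and is the argument I would ultimately write.
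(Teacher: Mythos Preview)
Your proposal is correct. In fact, the argument you settle on at the end---bounding $\lvert \b(\psi^k)-\b(\phi^k)\rvert$ by $\b(\theta)+\b(\theta^{-1})$ via subadditivity and then passing to the limit $\mu=\lim_k \b((\cdot)^k)/k$---is more elementary than the paper's proof: it uses only the definition of $\mu$ and does not invoke Proposition~\ref{Pro:MuPersists} at all.

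The paper's proof is closer in spirit to the first approach you sketched and then set aside. It argues in one line: $\theta$ factors as a composition of blow-ups followed by a composition of contractions, and under either operation the number of equivalence classes of persistent base-points is unchanged; hence $\nu(\phi)=\nu(\theta\phi\theta^{-1})$, and one concludes via Proposition~\ref{Pro:MuPersists}. What this buys is a conceptual picture (persistence is a birational notion), at the cost of relying on the structural result identifying $\mu$ with $\nu$. Your subadditivity route trades that picture for a two-line computation that is self-contained. Either is perfectly adequate here; the paper simply chose to emphasise the interpretation of $\mu$ established in the preceding proposition.
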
                                          
 
\begin{proof}
The map $\theta$ factorises into the blow-up of a finite number of base-points followed by the contraction of a finite number of 
curves. The number of equivalence classes of persistent base-points of $\phi$ and $\theta\phi\theta^{-1}$ is thus the same, and we 
get the result from Proposition~\ref{Pro:MuPersists}.

It is thus clear that $\mu(\phi)=0$ if $\phi$ is conjugate to an automorphism of a smooth projective surface.
\end{proof}
 
 \begin{pro}\label{Pro:RecCorMuInvariant}
 Let  $\mathrm{S}$ be a smooth projective surface, and let $\phi\in\Bir(\mathrm{S})$. The following conditions are equivalent:
 \begin{enumerate}
 \item
   $\mu(\phi)=0$;
   \item
   $\phi$ is conjugate to an automorphism of a smooth projective surface.
   \end{enumerate}
 \end{pro}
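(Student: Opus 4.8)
The implication $(2)\Rightarrow(1)$ is exactly Corollary~\ref{Cor:MuInvariant}, so the content lies in $(1)\Rightarrow(2)$. Assume $\mu(\phi)=0$. Since $\mu(\phi^{-1})=\mu(\phi)$, Proposition~\ref{Pro:MuPersists} applied to $\phi$ and to $\phi^{-1}$ shows that neither $\phi$ nor $\phi^{-1}$ has a persistent base-point, and that $\{\b(\phi^k)\}_{k\ge0}$ and $\{\b(\phi^{-k})\}_{k\ge0}$ are bounded. Write $B$, $B'$ for the finite sets of base-points of $\phi$ and of $\phi^{-1}$. The plan is to construct a birational morphism $\eta\colon\mathrm{Y}\to\mathrm{S}$ — a finite sequence of blow-ups of points of $\mathrm{S}$, possibly infinitely near — such that $\eta^{-1}\phi\eta\in\Aut(\mathrm{Y})$; this yields $(2)$.

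The key assertion is that the set $\mathcal{T}$ of all points of $\mathrm{S}$ (or infinitely near) that are a base-point of $\phi^{k}$ for at least one $k\in\mathbb{Z}$ is \emph{finite}. For a non-periodic point $p$, the backward $\phi^{\bullet}$-orbit $p,(\phi^{-1})^{\bullet}(p),(\phi^{-2})^{\bullet}(p),\dots$ consists of pairwise distinct points and stops precisely at the first $k$ with $p$ a base-point of $\phi^{-k}$. So I would show: a non-periodic $p\in B$ is always a base-point of $\phi^{-k}$ for some $k\ge1$ — equivalently, if $p\in B$ were a base-point of no $\phi^{-k}$, $k\ge1$, then $p$ would remain a base-point of every $\phi^{n}$, $n\ge1$, hence would be persistent, a contradiction. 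Granting this, each $p\in B$ has finite backward $\phi^{\bullet}$-orbit (the periodic ones by definition), and symmetrically each $p\in B'$ has finite forward $\phi^{\bullet}$-orbit. Since a base-point of $\phi^{k}$ ($k\ge1$) is, by following $p,\phi^{\bullet}(p),\dots$ until the first base-point $q$ of $\phi$ is reached, of the form $(\phi^{-i})^{\bullet}(q)$ with $q\in B$ and $0\le i<k$, all base-points of the $\phi^{k}$, $k\ge1$, lie in the finite set $\bigcup_{i\ge0}(\phi^{-i})^{\bullet}(B)$, and likewise those of the $\phi^{-k}$, $k\ge1$, lie in $\bigcup_{i\ge0}(\phi^{i})^{\bullet}(B')$; hence $\mathcal{T}$ is finite. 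Adding the finitely many infinitely near predecessors of its points, $\mathcal{T}$ becomes the set blown up by an honest birational morphism $\eta\colon\mathrm{Y}\to\mathrm{S}$.

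It remains to see that $\phi_{\mathrm{Y}}:=\eta^{-1}\phi\eta$ is an automorphism. The set $\mathcal{T}$ is stable under $\phi^{\bullet}$ and $(\phi^{-1})^{\bullet}$: if $q$ is a base-point of $\phi^{k}$ and $(\phi^{\pm1})^{\bullet}(q)$ is defined, it is (as in the proof of Proposition~\ref{Pro:MuPersists}) a base-point of $\phi^{k\mp1}$, so it again lies in $\mathcal{T}$. Now the base-points of $\eta^{-1}\phi\colon\mathrm{S}\dashrightarrow\mathrm{Y}$ are base-points of $\phi$ or $\phi^{-1}$-pull-backs of points blown up by $\eta$, that is, of $\mathcal{T}$; by the stability just recorded these all lie in $\mathcal{T}$ and so are resolved by $\eta$, whence $\phi_{\mathrm{Y}}=(\eta^{-1}\phi)\circ\eta$ is a morphism. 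The same argument applied to $\phi^{-1}$ gives that $\phi_{\mathrm{Y}}^{-1}=(\eta^{-1}\phi^{-1})\circ\eta$ is a morphism too. A birational morphism of smooth projective surfaces whose inverse is again a morphism is an isomorphism, hence $\phi_{\mathrm{Y}}\in\Aut(\mathrm{Y})$, proving $(2)$.

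The main obstacle is the finiteness of $\mathcal{T}$, and inside it the fact used above that a non-periodic base-point of $\phi$ which is never a base-point of a negative iterate stays a base-point of every positive iterate. This is the only place where one really has to analyse how base-points are created and destroyed under composition — roughly, a base-point of $\phi$ disappears from $\phi^{2}$ only through a cancellation with a base-point of $\phi^{-1}$ — and because base-points may be infinitely near, one must follow, above each point of $\mathrm{S}$, the whole tower of infinitely near points and check that these creations and cancellations are compatible along the tower. Everything else is bookkeeping with the operation $\phi^{\bullet}$ and the equivalence relation introduced before Proposition~\ref{Pro:MuPersists}.
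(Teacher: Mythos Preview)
Your reduction of $(2)\Rightarrow(1)$ to Corollary~\ref{Cor:MuInvariant} is fine, and the overall plan for $(1)\Rightarrow(2)$ is natural, but the central claim --- that the set $\mathcal{T}$ of \emph{all} base-points of \emph{all} iterates is finite --- is false. Consider
\[
\phi\colon (x:y:z)\dashrightarrow \big(x(x+z):y(x+z):xz\big),
\]
which is conjugate to the automorphism $(x,y)\mapsto(x+1,y)$ via $(x,y)\dasharrow(x,xy)$, so $\mu(\phi)=0$. A direct computation gives $\phi^{k}(x:y:z)=\big(x(x+kz):y(x+kz):xz\big)$ for every $k\in\mathbb{Z}$, whose base-points are $a=(0:0:1)$, $b=(0:1:0)$ and a third point $p_{k}$ infinitely near $b$ in the tangent direction $x+kz=0$. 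The points $p_{k}$ are pairwise distinct, so $\mathcal{T}$ is infinite. In particular $p_{1}\in B$ is non-periodic, is a base-point of no $\phi^{-k}$ ($k\ge1$), and yet is \emph{not} a base-point of $\phi^{n}$ for any $n\ge2$ --- it is precisely your key implication ``not a base-point of any negative iterate $\Rightarrow$ base-point of every positive iterate'' that fails. The cancellation making $p_{1}$ disappear from $\phi^{2}$ happens through the \emph{proper} base-point $b$ of $\phi^{-1}$ to which $p_{1}$ is infinitely near, not through $p_{1}$ itself; the infinitely-near bookkeeping you flagged as the main obstacle does not come out the way you assert. One also checks that $(\phi^{-i})^{\bullet}(p_{1})=p_{i+1}$, so the backward orbit of $p_{1}$ is genuinely infinite.

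This is why the paper proceeds differently. Instead of blowing up all of $\mathcal{T}$, it blows up only the (provably finite) set $K$ of points that are base-points of \emph{both} a positive and a negative iterate --- in the example above, $K=\{a,b\}$. This makes the map algebraically stable but not yet an automorphism; one must then \emph{contract} the $(-1)$-curves that the map sends to points (here, the strict transform of the line $x=0$), and finally argue by contradiction, using $\mu(\phi)=0$ once more, that on the resulting surface no base-point survives. The contraction step is essential: when $\mu(\phi)=0$ the wandering base-points $p_{k}$ are produced by a curve that one should blow \emph{down}, not by points one can blow \emph{up}.
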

 
 \begin{proof}
 Corollary~\ref{Cor:MuInvariant} yields $(2)\Rightarrow (1)$. It suffices then to show $(1)\Rightarrow (2)$.
 
 Denote by $K$ the set of points, that belong to $\mathrm{S}$ as proper or infinitely near points, which are base-points of $\phi^i$ and $\phi^{-j}$, for some $i$, $j>0$. Let us prove that $K$ is a finite set. Choosing the smallest possible $i,j$ associated to $p\in K$, the  point $p$ is equivalent to $(\phi^{i-1})^\bullet(p)$, which is a base-point of $\phi$, and the same holds replacing $p$ with $(\phi^{k})^{\bullet}(p)$, for $-j< k<i$. By this way, we associate a finite number of points of $K$  to each base-point of~$\phi$. This shows that $K$ is finite. Observing that any point of $K$ is either a proper point of $\mathrm{S}$ or is infinitely near, we can blow-up the set $K$, and obtain a birational morphism $\pi\colon\widehat{\mathrm{S}}\to \mathrm{S}$.
 
 By construction, the birational map $\widehat{\phi}\in \Bir(\widehat{\mathrm{S}})$ given by $\pi^{-1}\phi\pi$ has less base-points than $\phi$, and satisfies that no point of $\widehat{\mathrm{S}}$ is a base-point of $\widehat{\phi}^i$ and $\widehat{\phi}^{-j}$, for some $i$, $j>0$ (in the notation of \cite{DiFa}, the map $\widehat{\phi}$ is now \emph{algebraically stable}). If a $(-1)$-curve of $\widehat{\mathrm{S}}$ (a smooth curve isomorphic to $\p^1$ and having self-intersection $(-1)$) is contracted by $\widehat\phi$, we contract it, via a birational morphism $\eta_1\colon \widehat{\mathrm{S}}\to \widehat{\mathrm{S}}_1$. The map $\widehat{\phi}_1=\eta_1\circ\widehat{\phi}\circ (\eta_1)^{-1}$ is again algebraically stable, and we continue the process if a $(-1)$-curve of $\widehat{\mathrm{S}}_1$ is contracted by $\widehat\phi_1$. At the end, we obtain a birational morphism $\eta=\eta_k\circ \dots \circ \eta_1\colon \widehat{\mathrm{S}}\to \widehat{\mathrm{S}}_k=\widetilde{\mathrm{S}}$ which conjugates $\widehat\phi$ to $\phi_k=\widetilde\phi$, and such that no $(-1)$-curve of $\widetilde{\mathrm{S}}$ is contracted by $\widetilde\phi$. 
 
 It remains to show that $\widetilde\phi$ is an automorphism of $\widetilde{\mathrm{S}}$. Assuming the converse, we will deduce a contradiction. We write $\tau_1\colon \mathrm{X}_1\to \widetilde{\mathrm{S}}$ the blow-up of the base-points of $\widetilde\phi$, and write $\chi_1\colon \mathrm{X}_1\to \widetilde{\mathrm{S}}$ the morphism $\chi_1=\widetilde\phi\tau_1$, which is the blow-up of the base-points of $\widetilde\phi^{-1}$. Denote by $C_1\subset \mathrm{X}_1$ a $(-1)$-curve contracted by $\chi_1$ onto a base-point~$p$ of $\widetilde\phi^{-1}$ (such a curve exists for each base-point). Because $p$ is a base-point of $\widetilde\phi^{-1}$, it is not a base-point of $\widetilde\phi^{r}$ for any $r>0$. The map $\widetilde\phi$ has no persistent base-point, because $\mu(\phi)=\mu(\widetilde\phi)=0$ (Proposition~\ref{Pro:MuPersists} and Corollary~\ref{Cor:MuInvariant}). In consequence, there exists $k>1$ such that $p$ is not a base-point of $\widetilde\phi^{-k}$.

We write $\chi_2\colon \mathrm{X}_2\to \mathrm{X}_1$ the blow-up of the base-points of $\widetilde\phi^{1-k}\tau_1$, and denote by $\tau_2\colon \mathrm{X}_2\to \widetilde{\mathrm{S}}$ the morphism $\tau_2=\widetilde\phi^{1-k}\tau_1\chi_2$, which is the blow-up of the base-points of $(\tau_1)^{-1}\widetilde\phi^{k-1}$; this yields the following commutative diagram:
$$
\xymatrix{
&&\mathrm{X}_2\ar[lldd]^{\tau_2}\ar[rd]_{\chi_2}&\\
&&& \mathrm{X}_1\ar[rd]^{\chi_1}\ar[ld]_{\tau_1}&&\\
\widetilde{ \mathrm{S}}\ar@{-->}[rr]_{\widetilde\phi^{k-1}}&&\widetilde{ \mathrm{S}}\ar@{-->}[rr]_{\widetilde\phi}&& \widetilde{\mathrm{S}}.
}$$
Because $p$ is not a base-point of $\widetilde{\phi}^{-k}$, the curve $C_1\subset \mathrm{X}_1$ has to be contracted by $\tau_2\circ (\chi_2)^{-1}$, and the curve $C_2=(\chi_2)^{-1}(C_1)$ has self-intersection $-1$ (otherwise $\widetilde{\phi}^{-k}$ would have a base-point infinitely near to $p$, and $p$ itself would then be a base-point). 

The curve $\tau_1(C_1)$ is contracted by $\widetilde{\phi}$, and is then not a $(-1)$-curve. Because it is contracted by $\widetilde\phi^{1-k}$, there is a base-point $q$ of $\widetilde\phi^{1-k}$ that is a proper point of $\tau_1(C_1)$. Since $\tau_2$ is a morphism, $\tau_1\chi_2$ blows-up all base-points of $\widetilde{\phi}^{k-1}$, and thus blows-up $q$.
The fact that $q$ is a base-point of $\widetilde\phi^{1-k}$, implies that it is not a base-point of $\widetilde\phi$, and thus that it is not blown-up by $\tau_1$. In consequence, $(\tau_1)^{-1}(q)$ is a point blown-up by $\xi_2$, and which lies on $C_1$. This is incompatible with the fact that $C_2=(\chi_2)^{-1}(C_1)$ has  self-intersection $-1$.
 \end{proof}
 
\begin{rem}
In \cite[Theorem 0.4]{DiFa} one can find a characterisation of hyperbolic birational maps $\phi$ which are conjugate to an automorphism of a projective surface. If $\phi\in \Bir(\p^2)$ is hyperbolic, we conjugate it to a birational map of a smooth projective surface $\mathrm{S}$ where the action is algebraically stable (this means that the $(\phi|_{\mathrm{H}^{1,1}(\mathrm{S})})^n=(\phi^n)|_{\mathrm{H}^{1,1}(\mathrm{S})}$ for each $n$); its action on $\mathrm{H}^{1,1}(\mathrm{S})$ admits the eigenvalue $\lambda(\phi)>1$ with eigenvector $\theta_+$. The map $\phi$ is birationally conjugate to an automorphism if and only if $(\theta_+)^2=0$. 

Proposition~\ref{Pro:RecCorMuInvariant} gives another characterisation, for all maps $\phi\in \Bir(\p^2)$ (not only hyperbolic maps), depending only on $\mu(\phi)$. 
\end{rem}

\begin{eg}
In \cite{BK1, BK2, BK3, BK4, DeGr},  automorphisms with positive entropy are constructed starting from a birational map of $\p^2$. In \cite{DeGr} the authors take a birational map $\psi\in \Bir(\p^2)$, and choose $A\in \Aut(\p^2)$ such that $A\psi$ is conjugate to an automorphism of a surface with dynamical degree $>1$. The way to find $A$ is exactly to ensure that $A\psi$ has no persistent base-point (\emph{i.e.} $\mu(A\psi)=0$).  Let us give an example (\cite{DeGr}):

Let $\phi=A\psi$ be the birational map given by 
$$A\colon(x:y:z)\dashrightarrow\big(\alpha x+2(1-\alpha)y+(2+\alpha-\alpha^2)z:-x+(\alpha+1)z:x-2y+(1-\alpha)z\big)$$ with 
$\alpha\in\mathbb{C}\setminus\{0,\, 1\}$ and $$\psi\colon(x:y:z)\dashrightarrow(xz^2+y^3:yz^2:z^3).$$  The map $\psi$ (resp. 
$\psi^{-1}$) has five base-points, $p=(1:0:0)$ and four points infinitely near; we will denote $\widehat{P}_1$ (resp. $\widehat{P}_2$) 
the collection of these points. The automorphism $A$ is chosen such that:
\begin{itemize}
\item[$\bullet$] $\widehat{P}_1$, $A\widehat{P}_2$, $A\psi A\widehat{P}_2$ have distinct supports;
\item[$\bullet$]  $\widehat{P}_1=(A\psi)^2A\widehat{P}_2$.
\end{itemize}
In particular the base-points of~$\phi$ are non-persistent, so $\phi$ is conjugate to an automorphism of a
rational surface. More precisely $\phi$ is conjugate to an automorphism with positive entropy on $\mathbb{P}^2$ blown up in 
$\widehat{P}_1$, $A\widehat{P}_2$ and $A\psi A\widehat{P}_2$ (\emph{see} \cite[Theorem 3.1]{DeGr}).
\end{eg}

\section{Growth of Jonqui\`eres twists}\label{Sec:Jonq}

\begin{lem}\label{Lem:Jon}
Let $\phi$ be a birational map of $\mathbb{P}^2$ which preserves the pencil of lines passing through some point~$p_0$.  The set 
$$\left\{\deg\phi^k- k\cdot \frac{\mu(\phi)}{2}\ \Big\vert\ k\ge 0\right\}\subset \mathbb{Z}$$
 is bounded.
 
 In particular, the sequence $\left\{\deg\phi^k\right\}_{k\in \mathbb{N}}$ grows linearly if and only if $\mu(\phi)>0$ and its growth 
is given by  $$\frac{\mu(\phi)}{2}\in \frac{1}{2}\mathbb{N}.$$
\end{lem}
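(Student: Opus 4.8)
The plan is to work on a surface where $\phi$ becomes a fibered map over $\p^1$ and track how the degree and the base-points interact. Since $\phi$ preserves the pencil of lines through $p_0$, I would first pass to the Hirzebruch surface $\mathbb{F}_1$ (the blow-up of $\p^2$ at $p_0$), where the pencil becomes the ruling $\mathbb{F}_1 \to \p^1$. On $\mathbb{F}_1$, the map $\phi$ is a birational self-map permuting the fibres over an automorphism of $\p^1$, and $\deg \phi^k$ is encoded by the action on $\mathrm{Pic}(\mathbb{F}_1) = \z f \oplus \z s$ (with $f$ the fibre class and $s$ a section). Concretely, if $(\phi^k)^*(s) \sim a_k s + b_k f$, then $a_k$ is bounded (in fact it stabilizes, since $\phi$ being a Jonquières twist means the section-degree stays bounded while the fibre-degree grows), and $\deg \phi^k$ is an affine function of $a_k$ and $b_k$. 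So the whole problem reduces to controlling the growth of $b_k$.

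The key step is to relate the increment $b_{k+1} - b_k$ to the number of base-points of $\phi$ lying on a general fibre, which in turn connects to $\mu(\phi)$. For this I would resolve $\phi$ on $\mathbb{F}_1$ by a minimal resolution $\pi_1, \pi_2 \colon Z \to \mathbb{F}_1$; the base-points of $\phi$ split into those lying on the exceptional section / special fibres (boundedly many, contributing $O(1)$) and those lying on general fibres of the ruling. Each base-point on a general fibre forces, after one iteration, the fibre-degree to increase by the right amount. The cleanest way to make this precise is to compute $(\phi^*)(f')$ where $f'$ is the class of a general fibre: since $\phi$ sends fibres to fibres, $\phi^*(f) = f$ up to a multiple equal to $1$, and the growth in $b_k$ comes from the self-intersection bookkeeping $((\phi^k)^*(s))^2$ versus $s^2$, which changes by exactly (twice) the number of base-points of $\phi^k$ that are "section-type", i.e. essentially $\b(\phi^k)$ up to a bounded error. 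Translating back to $\p^2$, the degree of $\phi^k$ as a map of $\p^2$ equals the fibre-multiplicity of $(\phi^k)^*(\text{line})$, and one finds $2\deg\phi^k = \b(\phi^k) + (\text{bounded})$, which upon dividing by $k$ and using Proposition~\ref{Pro:MuPersists} ($\mu(\phi) = \nu \in \z$) gives the claim. The "in particular" clause is then immediate: the growth is linear iff $\mu(\phi) > 0$, with leading coefficient $\mu(\phi)/2 \in \frac12\z$, and it is nonzero iff $\phi$ is not conjugate (on this surface, already an automorphism) to an automorphism, matching the Jonquières case.

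The main obstacle I expect is making the relation $2\deg\phi^k = \b(\phi^k) + O(1)$ genuinely precise rather than heuristic. The subtlety is that base-points of $\phi^k$ can be infinitely near and can be "absorbed" or "created" near the special locus (the exceptional section of $\mathbb{F}_1$ and the at-most-one-or-two special fibres), so one must argue that all but boundedly many base-points of $\phi^k$ lie on general fibres and each contributes exactly a "$+1$" to $2\deg$. I would handle this by a Noether–type formula: writing $\phi^k$ on $\p^2$ with homaloidal-type data, the identity $\deg(\phi^k)^2 = 1 + \sum (\text{multiplicities})$ combined with $3(\deg\phi^k - 1) = \sum(\text{multiplicities at base-points}) + (\text{correction})$ from $K_{\p^2}$, together with the fact that all base-points except $p_0$ and its infinitely near companions have multiplicity $1$ (since $\phi$ preserves a pencil of lines, its homaloidal net has a point of high multiplicity $\deg\phi^k - 1$ at $p_0$ and the rest are simple), pins down $\deg\phi^k$ in terms of $\b(\phi^k)$ exactly as $2\deg\phi^k - 2 = \b(\phi^k) - 1 + (\text{bounded contribution from } p_0)$. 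Once that bounded-error identity is established, dividing by $k$ and invoking the integrality of $\mu(\phi)$ from Proposition~\ref{Pro:MuPersists} finishes both assertions.
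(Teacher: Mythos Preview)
Your proposal is correct at its core and ultimately converges to the paper's argument, but it is substantially over-engineered. The entire first paragraph --- passing to $\mathbb{F}_1$, tracking $(\phi^k)^*(s)\sim a_ks+b_kf$, analysing increments $b_{k+1}-b_k$ --- is a detour that never gets used; in fact $a_k=1$ for all $k$ (a birational fibred map over an automorphism of $\p^1$ has fibrewise degree $1$), so there is nothing to ``stabilise''. The paper's proof is exactly the observation you reach only at the end of your second paragraph: any de Jonqui\`eres transformation of degree $d\ge 2$ has linear system with a single point of multiplicity $d-1$ at $p_0$ and precisely $2(d-1)$ further base-points, all simple (this is forced by the two Noether equations $\sum m_i=3(d-1)$ and $\sum m_i^2=d^2-1$ once $m_1=d-1$). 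Hence $\b(\phi^k)=2\deg\phi^k-1$ exactly for $\deg\phi^k\ge 2$ --- there is no ``bounded contribution from $p_0$'' and no ``infinitely near companions'' of $p_0$ with high multiplicity to worry about; the relation is an equality, not an $O(1)$ estimate. Feeding this into Proposition~\ref{Pro:MuPersists} gives the lemma in three lines. So your strategy is right, but you should strip away the $\mathbb{F}_1$ scaffolding and state the exact identity $\b(\phi^k)=2\deg\phi^k-1$ directly.
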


\begin{rem}
Conjugating $\phi$ by a map which preserves the pencil does not change the growth $\left\{\deg\phi^k\right\}_{k\in \mathbb{N}}$, but 
conjugating it by a map which does not preserve the pencil can increase it (\emph{see} Proposition~\ref{pro:MinimalJonq}).
\end{rem}

 \begin{proof}
 For any $k$,  $\phi^k$ preserves the pencil of lines passing through $p_0$. It implies that the linear system of~$\phi^k$ (which is 
the pull-back of the system of lines of $\p^2$ by $\phi^k$) has multiplicity $\deg\phi^k-~1$ at $p_0$ and has exactly $2(\deg\phi^k-1)$ 
other base-points, all of multiplicity $1$. In particular, $\deg\phi^k=\lfloor \frac{\b(\phi^k)}{2}\rfloor$. The result follows then 
directly from Proposition~\ref{Pro:MuPersists}.
 \end{proof}

\begin{eg}
Let us consider the family of birational maps studied in \cite{De2} and defined as 
\begin{align*}
& f_{\alpha,\beta}\colon(x:y:z) \dashrightarrow((\alpha x+y)z:\beta y(x+z):z(x+z)), && \alpha, \, \beta \in \mathbb{C}^*.
\end{align*}
Any of the $f_{\alpha,\beta}$ has three base-points: $(1:0:0)$, $(0:1:0)$ and $(-1:\alpha:1)$, and preserves the pencil of lines 
passing through $(1:0:0)$. Checking that $(-1:\alpha:1)$ is the only one persistent base-point (\cite[Theorem~1.6]{De2}), 
the growth of $\big\{\deg f_{\alpha,\beta}^k\big\}_{k\in\mathbb{N}}$ is given by $\frac{k}{2}$ (\emph{see} \cite[Lemma~1.4]{De2}).
\end{eg} 

\begin{pro}\label{pro:MinimalJonq}
Let $\phi\in \Bir(\p^2)$ be a Jonqui\`eres twist. There exists an integer $a\in \mathbb{N}$ such that 
$$\lim\limits_{k\to +\infty} \frac{\deg(\phi^k)}{k}=a^2\frac{\mu(\phi)}{2}.$$
Moreover, $a=1$ if and only if $\phi$ preserves a pencil of lines.
\end{pro}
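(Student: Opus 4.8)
The plan is to reduce to the case handled in Lemma~\ref{Lem:Jon}, namely where $\phi$ preserves a pencil of lines, and to keep track of the degree while moving between pencils of rational curves. A Jonqui\`eres twist preserves a unique pencil $\Lambda$ of rational curves (\cite[Theorem 0.2]{DiFa}); resolving the base-points of a general member of $\Lambda$ by a birational morphism $\pi\colon\mathrm{S}\to\p^2$, the strict transforms form a base-point-free pencil of rational curves on $\mathrm{S}$ giving a fibration $\mathrm{S}\to\p^1$, and after contracting components in the fibres one arrives at a Hirzebruch surface $\mathbb{F}_e$ with a $\p^1$-fibration $\mathbb{F}_e\to\p^1$ invariant by the conjugate of $\phi$. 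Choosing the birational morphism $\mathbb{F}_e\dasharrow\p^2$ which blows up a point off the exceptional section and contracts the two fibres through it (when $e>0$; if $e=0$ use a point and contract the two rulings), one conjugates $\phi$ to a map $\widehat\phi$ of $\p^2$ preserving the pencil of lines through a point $p_0$. By Lemma~\ref{Lem:Jon}, $\lim_k \deg(\widehat\phi^k)/k = \mu(\widehat\phi)/2 = \mu(\phi)/2$, since $\mu$ is a conjugacy invariant (Corollary~\ref{Cor:MuInvariant}).

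It remains to relate $\deg(\phi^k)$ to $\deg(\widehat\phi^k)$. Write $\psi\colon\p^2\dasharrow\p^2$ for the birational conjugacy, so $\phi^k = \psi^{-1}\widehat\phi^k\psi$; the point is that $\psi$ maps the pencil of lines (through $p_0$) to the pencil $\Lambda$ (of rational curves of some degree $a$), and conversely. I would compute the degree of $\phi^k$ by intersecting its linear system with a general line, and express this via the fact that the pull-back of a line under $\psi^{-1}$ is a curve of degree $a$ passing through the base-points of $\psi^{-1}$ with prescribed multiplicities, the key points being those lying on members of $\Lambda$. The upshot is an equality of the form $\deg(\phi^k) = a^2\deg(\widehat\phi^k) + O(1)$: the factor $a^2$ arises because one composes on both sides with $\psi$ and $\psi^{-1}$, each contributing a factor $a$ to the leading term (intuitively, $a$ is the degree of a general member of $\Lambda$, and the number of free base-points of $\phi^k$ scales like $a^2 k$). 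Dividing by $k$ and letting $k\to\infty$ gives $\lim_k\deg(\phi^k)/k = a^2\,\mu(\phi)/2$.

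For the last assertion: if $\phi$ itself preserves a pencil of lines, then by uniqueness of the invariant pencil that pencil is $\Lambda$, so $a=1$ and $\lim_k\deg(\phi^k)/k=\mu(\phi)/2$ directly from Lemma~\ref{Lem:Jon}. Conversely, if $a=1$ then the general member of $\Lambda$ has degree $1$, i.e.\ $\Lambda$ is a pencil of lines, and since $\phi$ preserves $\Lambda$ it preserves a pencil of lines. One must also check $a\in\mathbb{N}$, i.e.\ that the reduction to $\mathbb{F}_e$ and back is always possible — this is where one invokes that every rational pencil on a rational surface is, after a birational morphism, the fibres of a $\p^1$-bundle, a standard fact from the theory of conic bundles / Hirzebruch surfaces used already in the proof of Proposition~\ref{Prp:P2Fn}.

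**Main obstacle.** The hard part will be establishing the precise relation $\deg(\phi^k)=a^2\deg(\widehat\phi^k)+O(1)$ with the bounded error term uniform in $k$ — in particular, controlling how the base-points of the conjugating map $\psi$ interact with the (growing number of) base-points of $\widehat\phi^k$, so that only a bounded contribution is lost. The clean way is presumably to work on a common resolution and track the linear system $\phi^{k*}(\mathcal{O}(1))$ in the Picard lattice, writing it in terms of the class of a member of $\Lambda$ and the fibre class, and showing the coefficient of the "moving" part grows like $a^2\cdot(\mu(\phi)/2)\cdot k$; the boundedness of the error then follows from Proposition~\ref{Pro:MuPersists} applied to $\widehat\phi$. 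A secondary point requiring care is the exceptional configurations ($e=0$ versus $e>0$, and degenerate positions of the blown-up point), but these only affect the $O(1)$ term.
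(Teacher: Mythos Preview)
Your strategy is essentially the paper's: conjugate $\phi$ by some $\psi$ to a map $\widetilde\phi$ preserving a pencil of lines through a point $p$, invoke Lemma~\ref{Lem:Jon} for $\widetilde\phi$, and then compare $\deg(\phi^k)$ with $\deg(\widetilde\phi^k)$ via intersection theory. Your identification of $a$ as the degree of a general member of the invariant pencil is correct, and the final equivalence $a=1\Leftrightarrow$ the pencil consists of lines is exactly the paper's argument.

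Where you diverge is in the bookkeeping. You pass through a general Hirzebruch surface $\mathbb{F}_e$ obtained by resolving the invariant pencil of $\phi$ and contracting fibre components; this is unnecessary, and your description of the map $\mathbb{F}_e\dasharrow\p^2$ is garbled (a point of $\mathbb{F}_e$ lies on one fibre, not two; for $e\ge 2$ one needs a sequence of elementary links). The paper instead works directly on $\mathbb{F}_1$: blow up only the point $p$, and transport the linear system of lines from the \emph{source} $\p^2$ to a system $\Lambda$ on $\mathbb{F}_1$ via $\pi^{-1}\psi$. Writing $\Lambda\equiv aL+bf$ in $\Pic{\mathbb{F}_1}$ (with $L=\pi^{*}(\text{line})$ and $f$ a fibre) and using that $\widehat\phi^k(L)\equiv L+(d_k-1)f$ where $d_k=\deg\widetilde\phi^k$, one gets the total intersection $\Lambda\cdot\widehat\phi^k(\Lambda)=a^2d_k+2ab$ in one line. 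The ``hard part'' you anticipate --- bounding the base-point contribution uniformly in $k$ --- is then immediate: $\Lambda$ has a fixed finite set of base-points, each of multiplicity at most $a$ (since $\Lambda\cdot f=a$), so the defect between total and free intersection is bounded independently of $k$. This bypasses the Picard-lattice tracking on a common resolution that you propose.
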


\begin{proof}
Since $\phi$ is a Jonqui\`eres twist, there exists $\psi\in \Bir(\p^2)$ such that $\widetilde{\phi}=\psi\phi\psi^{-1}$ preserves the 
pencil of lines passing through some point $p\in \p^2$. Let $\pi\colon \mathbb{F}_1\to \p^2$ be the blow-up of $p\in \mathbb{P}^2$, 
and let $\widehat{\phi}\in \Bir(\mathbb{F}_1)$ be~$\widehat{\phi}=\pi^{-1}\widetilde{\phi}\pi$. Denote by $L_{\p^2}$ the linear system of lines 
of $\p^2$ and by $\Lambda$ the linear system on $\mathbb{F}_1$ corresponding to the image by $\pi^{-1}\psi$ of the system of lines of 
$\p^2$. The degree of $\phi^k$ is equal to the free intersection of $L_{\p^2}$ with $\phi^k(L_{\p^2})$, which is the free intersection 
of $\Lambda$ with $\widehat{\phi}(\Lambda)$.

On $\mathbb{F}_1$, $\Lambda$ is linearly equivalent to $aL+bf$, where $L=\pi^{-1}(L_{\p^2})$, $f$ is the divisor of a fibre and where 
$a$, $b\in~\mathbb{N}$. Note that $\phi^k(L)$ is the transform on $\mathbb{F}_1$ of the linear system of $\widetilde{\phi}^{-k}$, and is 
thus equal to $L+(d_k-1)f$, where $d_k$ is the degree of $\widetilde{\phi}^k$ (and of $\widetilde{\phi}^{-k}$).
The system $\widehat{\phi}^k(\Lambda)$ is then linearly equivalent to $aL+(a(d_k-1)+b)f$, so the total intersection of $\widehat{\phi}^k
(\Lambda)$ with $\Lambda$ is $a^2 d_k+2ab.$ Because $\widehat{\phi}^k(\Lambda)\cdot f=a$, each base-point of $\widehat{\phi}^k(\Lambda)$ has 
at most multiplicity $a$. By Lemma~\ref{Lem:Jon}, $\displaystyle\lim\limits_{k\to +\infty} \frac{d_k}{k}=\frac{\mu(\phi)}{2}$.
 The number of base-points of $\Lambda$ being bounded, the free intersection of $\widehat{\phi}^k(\Lambda)$ with $\Lambda$ grows like 
$a^2\frac{\mu(\phi)}{2}\cdot k$. 
 
 It remains to see that if $a=1$ then $\phi$ preserves a pencil (the other direction follows from Lemma~\ref{Lem:Jon}). If $a=1$, one gets
 $f\cdot \Lambda=1$. This implies that the free intersection of $\phi^{-1}\pi(f)$ with $\phi^{-1}\pi(\Lambda)=L_{\p^2}$ is $1$; so 
$\phi^{-1}\pi(f)$ is a pencil of lines, invariant by $\phi$.
\end{proof}

Lemma \ref{Lem:Jon} and the second assertion of Corollary \ref{Cor:MuInvariant} imply the following statement of \cite[Theorem 0.2.]{DiFa}
 
\begin{cor}\label{Cor:difa}
Let  $\phi$ be a Jonqui\`eres twist; then $\phi$ is not conjugate to an automorphism.
\end{cor}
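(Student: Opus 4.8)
The plan is to derive Corollary~\ref{Cor:difa} as a direct consequence of the two results quoted in its statement, namely Lemma~\ref{Lem:Jon} and Corollary~\ref{Cor:MuInvariant}(2). First I would recall that, by definition, a Jonqui\`eres twist $\phi$ is a birational map whose sequence $\{\deg\phi^k\}_{k\in\mathbb{N}}$ grows linearly. Since being a Jonqui\`eres twist is stable under conjugation, and since any such map preserves a pencil of rational curves that can be sent birationally to the pencil of lines through a point, I may assume (after conjugating) that $\phi$ preserves the pencil of lines through some $p_0\in\p^2$; this conjugation changes neither the dynamical number of base-points $\mu(\phi)$ (Corollary~\ref{Cor:MuInvariant}) nor the property of being conjugate to an automorphism.

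Under this reduction, Lemma~\ref{Lem:Jon} applies and tells us that the linear growth of $\{\deg\phi^k\}$ forces $\mu(\phi)>0$: indeed, the lemma states that $\{\deg\phi^k - k\cdot\mu(\phi)/2\}_{k\ge 0}$ is bounded, so if $\mu(\phi)$ were $0$ the degree sequence would be bounded, contradicting linear growth. Hence $\mu(\phi)=\nu\ge 1$ where $\nu$ is the number of equivalence classes of persistent base-points. On the other hand, Corollary~\ref{Cor:MuInvariant}(2) asserts that if $\phi$ were conjugate to an automorphism of a smooth projective surface, then $\mu(\phi)=0$. These two facts are incompatible, so $\phi$ cannot be conjugate to an automorphism.

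I expect there to be essentially no obstacle here, as the statement is a formal corollary; the only point requiring a sentence of care is the initial reduction to a map preserving a pencil of lines (so that Lemma~\ref{Lem:Jon} is applicable), which uses that a Jonqui\`eres twist preserves a pencil of rational curves birational to a pencil of lines (as recalled in the introduction, citing \cite[Theorem~0.2]{DiFa}) together with the conjugation-invariance of both $\mu(\phi)$ and the property of being conjugate to an automorphism. Once that reduction is in place, the argument is a two-line deduction combining the boundedness of $\{\deg\phi^k - k\mu(\phi)/2\}$ with $\mu(\phi)=0$ for automorphisms.
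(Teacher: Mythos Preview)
Your proposal is correct and follows exactly the approach the paper intends: the corollary is stated without a separate proof environment, the text simply noting that Lemma~\ref{Lem:Jon} and the second assertion of Corollary~\ref{Cor:MuInvariant} imply it. You have merely made explicit the reduction step (conjugating so that $\phi$ preserves a pencil of lines through a point, so that Lemma~\ref{Lem:Jon} applies) that the paper leaves implicit.
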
  
 
 We can also derive the following new results.
\begin{cor}\label{Cor:conj}
Let $\phi$ be a Jonqui\`eres twist. If $\phi^m$ and $\phi^n$ are conjugate in $\Bir(\p^2)$ for some $m$, $n\in \mathbb{Z}$, then 
$\vert m\vert=\vert n\vert$.
\end{cor}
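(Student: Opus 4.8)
The plan is to use the invariance of the dynamical number of base-points $\mu$ under conjugation (Corollary~\ref{Cor:MuInvariant}) together with the refined control on the degree growth provided by Proposition~\ref{pro:MinimalJonq}. Since $\phi$ is a Jonqui\`eres twist, it has linear degree growth, so $\mu(\phi)>0$ by Lemma~\ref{Lem:Jon}. First I would observe that $\phi^m$ and $\phi^n$ being conjugate forces $\mu(\phi^m)=\mu(\phi^n)$; but we computed earlier that $\mu(\phi^k)=|k|\cdot\mu(\phi)$ for every $k\in\mathbb{Z}$, hence $|m|\cdot\mu(\phi)=|n|\cdot\mu(\phi)$, and since $\mu(\phi)>0$ we conclude $|m|=|n|$.

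Two small points need to be checked to make this airtight. If one of $m,n$ is zero, then the corresponding iterate is the identity, which is an automorphism, so by Corollary~\ref{Cor:difa} (or directly because $\mu(\mathrm{id})=0\neq|k|\mu(\phi)$ for $k\neq0$) the other exponent must also be zero; so we may assume $mn\neq0$. Also, conjugation here is in $\Bir(\p^2)$, and Corollary~\ref{Cor:MuInvariant} applies to birational maps between smooth projective surfaces, which covers this case. These are routine.

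There is no real obstacle here: the statement is an immediate corollary of the machinery already set up, namely the integrality and conjugacy-invariance of $\mu$ (Proposition~\ref{Pro:MuPersists}, Corollary~\ref{Cor:MuInvariant}) and the identity $\mu(\phi^k)=|k|\mu(\phi)$, combined with the fact that a Jonqui\`eres twist has $\mu(\phi)>0$ (Lemma~\ref{Lem:Jon}). The entire proof is three lines. If anything could be called the ``hard part,'' it is only making sure one does not need the full strength of Proposition~\ref{pro:MinimalJonq} — and indeed one does not; the conjugation invariance of $\mu$ alone suffices, since the quadratic factor $a^2$ in the degree growth is irrelevant once we track $\mu$ itself rather than the leading coefficient of $\deg\phi^k$.
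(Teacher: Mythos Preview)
Your proof is correct and follows essentially the same route as the paper: deduce $\mu(\phi)>0$ from the linear degree growth via Lemma~\ref{Lem:Jon}, then use the conjugacy invariance of $\mu$ (Corollary~\ref{Cor:MuInvariant}) together with $\mu(\phi^k)=|k|\,\mu(\phi)$ to conclude. One tiny cosmetic point: Lemma~\ref{Lem:Jon} is stated for maps preserving a pencil of \emph{lines}, so strictly speaking you should first conjugate $\phi$ to such a map (as the paper does, invoking \cite[Theorem~0.2]{DiFa}) before applying it; since $\mu$ is a conjugacy invariant this changes nothing, and your argument goes through unchanged.
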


\begin{proof}
The fact that $\left\{\deg \phi ^k\right\}_{k\in \mathbb{N}}$ grows linearly implies that $\phi$ preserves a pencil of rational 
curves \cite[Theorem 0.2]{DiFa}. In particular $\phi$ is conjugate to a birational map of $\p^2$ which preserves the pencil of 
lines passing through some fixed point $p_0$. 
According to Lemma~\ref{Lem:Jon}, one finds $\mu(\phi)>0$.

As $\phi^m$ and $\phi^n$ are conjugate in $\Bir(\p^2)$ one has $\mu(\phi^m)=\mu(\phi^n)$ (Corollary~\ref{Cor:MuInvariant}). 
Sin\-ce~$\mu(\phi^k)=~\vert k\cdot \mu(\phi)\vert$ for any $k$, we get $\vert m\vert=\vert n\vert$.
\end{proof}

\section{Growth of Halphen twists}\label{Sec:Halphen}

In Section~\ref{Sec:Jonq} (especially Lemma~\ref{Lem:Jon}), we described the degree growth of a Jonqui\`eres twist $\phi$, and showed 
that it is given by $\mu(\phi)$, a birational invariant given by the growth of base-points. For an Halphen twist, the dynamical number 
of base-points is trivial, but the growth can also be quantified by an invariant. Recall that an Halphen twist $\phi$ preserves an unique pencil of elliptic curves. By \cite[Theorem 2 and Proposition 7, page 127]{Gi}, a power of $\phi$ preserves any member of the pencil, and acts on this via a translation. 
\begin{pro}\label{Pro:Halphen}
Let $\phi\in \Bir(\p^2)$ be an Halphen twist.

\begin{enumerate}
\item
The set $$\left\{\lim\limits_{k\to +\infty} \frac{\deg(\psi\phi^k\psi^{-1})}{k^2}\ \big\vert\ \psi\in \Bir(\p^2)\right\}$$ admits a minimum, 
which is a positive rational number $\kappa(\phi)\in \mathbb{Q}$. If $\phi$ acts via a translation on each member of its invariant pencil of elliptic curves, then $\kappa(\phi)\in 9\mathbb{N}$.
\item

There exists an integer $a\ge 3$ such that $\lim\limits_{k\to +\infty} \frac{\deg(\phi^k)}{k^2}=\kappa(\phi)\cdot\frac{a^2}{9}$.
\item

 The following conditions are equivalent:

\begin{enumerate}
\item
$a=3$;
\item
$\phi$ preserves an Halphen pencil, \emph{i.e.}\ a pencil of $($elliptic$)$ curves of degree $3n$ passing through $9$ points with 
multiplicity $n$.
\end{enumerate}\end{enumerate}
\end{pro}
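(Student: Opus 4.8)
The plan is to mirror the structure of the Jonquières case (Lemma~\ref{Lem:Jon} and Proposition~\ref{pro:MinimalJonq}), replacing the pencil of lines by an Halphen pencil. First I would normalise: since $\phi$ is an Halphen twist, it preserves a unique pencil of elliptic curves, which can be carried by a birational map onto an Halphen pencil of index $m$, i.e.\ a pencil of curves of degree $3m$ with nine base-points of multiplicity $m$. So after conjugating, I may assume $\phi$ preserves such an Halphen pencil. Blowing up the nine base-points gives a rational elliptic surface $\mathrm{S}\to\p^1$ with $-K_\mathrm{S}$ equal to the class of a fibre divided by $m$, and $\phi$ lifts to an automorphism of $\mathrm{S}$ (an Halphen twist is conjugate to such an automorphism, since $\mu(\phi)=0$ by Corollary~\ref{Cor:MuInvariant} applied after this reduction — or more directly since the pencil has no base-point after blow-up). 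On $\mathrm{S}$, the action of $\phi$ on $\mathrm{Pic}(\mathrm{S})$ is infinite and unipotent (it fixes the fibre class $f$ and acts by a nontrivial Jordan block on the orthogonal lattice), so $(\phi^k)_*$ grows quadratically on $\mathrm{Pic}(\mathrm{S})$; quantitatively, if $\ell$ is the class of a section-type divisor, $(\phi^k)_*\ell = \ell + k\, v + \binom{k}{2} c\, f$ for a fixed primitive vector $v$ and integer $c\ne 0$, the coefficient $c$ being essentially the self-intersection datum of the translation. This gives $\lim \deg(\phi^k)/k^2$ on $\mathrm{S}$ as a positive rational, and it is minimal among all birational models preserving this pencil.

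Next I would compute how the limit changes when we go to $\p^2$ by a birational map $\theta$ not preserving the pencil. As in Proposition~\ref{pro:MinimalJonq}, pull back the pencil of lines of $\p^2$ to a linear system $\Lambda$ on $\mathrm{S}$ (or on a common resolution) and write $\Lambda \equiv a(-K_\mathrm{S}) + (\text{stuff in } f^{\perp})$ — more precisely, $\Lambda\cdot f = 3a$ for some integer $a$, since $-K_\mathrm{S}\cdot$(general member) relates to the degree; then each base-point of $\widehat\phi^k(\Lambda)$ has multiplicity bounded by a constant times $a$, and the total intersection $\widehat\phi^k(\Lambda)\cdot\Lambda$ grows like $a^2$ times the minimal growth. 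This yields the formula $\lim \deg(\phi^k)/k^2 = \kappa(\phi)\cdot a^2/9$, where the $9$ is the normalisation coming from the nine base-points of the Halphen pencil and $\kappa(\phi)$ is the value on the Halphen model. The bound $a\ge 3$ comes from the fact that $\Lambda\cdot f\ge 3$: a pencil of curves on $\mathrm{S}$ intersecting a general fibre of the Halphen fibration in fewer than $3$ points would correspond to a pencil of rational curves on $\p^2$ mapping with small degree to the elliptic fibration, contradicting that the fibres have genus $1$ (the morphism to $\p^1$ induced on a rational curve in the pencil has degree $\Lambda\cdot f/a$, and by Riemann–Hurwitz genus considerations this forces $a\mid \Lambda\cdot f$ with quotient at least... ) — the cleanest argument is that $\Lambda\cdot f=a$ would make the pencil a pencil of rational curves each meeting each elliptic fibre once, hence a rational section, impossible; $\Lambda\cdot f=2a$ gives a hyperelliptic-type structure that is likewise excluded. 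Thus $a\ge 3$.

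Finally, for the equivalence $(a=3)\Leftrightarrow(\phi$ preserves an Halphen pencil$)$: if $a=3$, then $\Lambda\cdot f = 9$ (after tracking the normalisation) and the general member of the pencil $\theta^{-1}\pi(\Lambda)$ on $\p^2$ is a curve meeting the fibration appropriately; unwinding, the system $\theta^{-1}\pi_*(\text{general fibre of the elliptic fibration})$ is a pencil of plane curves of degree $3n$ with nine $n$-fold points — an Halphen pencil — preserved by $\phi$. Conversely if $\phi$ preserves an Halphen pencil, take $\theta$ to be the identity-type map realising it, so $\Lambda$ is already the system of lines on the Halphen model, $a=3$, and $\kappa(\phi)\cdot 9/9 = \kappa(\phi)$ is attained. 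The statement that $\kappa(\phi)\in 9\mathbb{N}$ when $\phi$ acts by translation on each member follows from the integrality of the translation-length datum $c$ on the elliptic surface together with the factor coming from the index of the Halphen pencil.

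\textbf{Main obstacle.} The delicate point is the quantitative computation of the quadratic growth on the elliptic surface and its minimality — specifically identifying $\kappa(\phi)$ with an explicit arithmetic invariant of the translation action on the fibres (the square of the translation vector in the Mordell–Weil/Néron–Severi lattice) and proving that no birational model preserving the pencil does better, together with the case analysis ruling out $a=1,2$. The bookkeeping of multiplicities of base-points of $\widehat\phi^k(\Lambda)$, to pass from total intersection to free intersection without losing the leading $k^2$ term, is the technical heart, exactly as in Proposition~\ref{pro:MinimalJonq} but with one more power of $k$ to control.
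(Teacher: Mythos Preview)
Your overall strategy is the paper's: conjugate to an Halphen model, lift to an automorphism $\widehat\phi$ of the rational elliptic surface $S$, compute the quadratic growth from the (translation) action on $\mathrm{Pic}(S)$, then compare an arbitrary homaloidal system $\Lambda=\pi^{-1}\psi(L)$ on $S$ with the canonical one to extract the factor $a^2$. The bounded base-point contribution (because $\widehat\phi\in\Aut(S)$) is exactly how the paper passes from total to free intersection.

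There are two genuine gaps.

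\textbf{The bound $a\ge 3$.} Your definition of $a$ drifts (you write both $\Lambda\cdot f=3a$ and $\Lambda\cdot f=a$), and the genus/section argument you sketch does not work as stated: rational elliptic surfaces \emph{do} have rational sections (for $m=1$ the nine exceptional curves are sections), so ``$\Lambda\cdot f=1\Rightarrow$ rational section, impossible'' is false without the extra observation that sections are rigid and cannot move in a $2$-dimensional linear system; and for $a=2$ you offer only ``likewise excluded''. The paper bypasses all of this. It sets $a=-\Lambda\cdot K_S$ and uses that $\Lambda$ is the lift of a homaloidal net: resolving the base-points of $\Lambda$ on $S$ yields a birational \emph{morphism} $\widetilde S\to\p^2$, on which the pullback $\widetilde\Lambda$ of a line satisfies $\widetilde\Lambda\cdot(-K_{\widetilde S})=3$; pushing down gives $-\Lambda\cdot K_S=3+\sum m_i\ge 3$, with equality iff $\Lambda$ has no base-point on $S$, i.e.\ iff $\psi^{-1}\pi$ is already a morphism, i.e.\ iff the invariant pencil is already Halphen. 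This single computation delivers both $a\ge 3$ and the equivalence in part~(3).

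\textbf{The integrality $\kappa(\phi)\in 9\mathbb{N}$.} Saying it ``follows from integrality of the translation-length datum $c$'' is not enough: you need $-\Delta^2$ to be a positive \emph{even} integer. The paper uses the explicit formula (from Gizatullin) $D\mapsto D-m(D\cdot K_S)\Delta+\gamma K_S$ to obtain $\kappa(\phi)=9m^2\cdot(-\Delta^2/2)$, and then checks the parity and sign of $\Delta^2$ by writing $\Delta=dL-\sum a_iE_i$ in the standard basis: the condition $\Delta\cdot K_S=0$ gives $\sum a_i=3d$, whence $\Delta^2\equiv d^2-\sum a_i\equiv 0\pmod 2$, and Cauchy--Schwarz on $(1,\dots,1)$ and $(a_1,\dots,a_9)$ gives $\Delta^2<0$ unless $\Delta$ is a multiple of $K_S$ (which would make $\widehat\phi$ act trivially on $\mathrm{Pic}(S)$). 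Your Jordan-block description $(\phi^k)_*\ell=\ell+kv+\binom{k}{2}cf$ is equivalent, but hides precisely this arithmetic.
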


\begin{proof}
An Halphen twist preserves an unique elliptic fibration, so there exists an element $\psi$ in $\Bir(\p^2)$ such that $\phi'=\psi \phi 
\psi^{-1}$ preserves an Halphen pencil. Denoting by $\pi\colon \mathrm{S}\to \p^2$ the blow-up of the $9$ base-points of the pencil, 
$\widehat{\phi}=\pi^{-1}\phi'\pi$ is an automorphism of $\mathrm{S}$, which preserves the elliptic fibration $\mathrm{S}\to \p^1$ given by 
$\vert-mK_\mathrm{S}\vert$ for some positive integer $m$. 

 
  Replacing $\widehat{\phi}$ by some power if needed, we can assume that
 $\widehat{\phi}$ is a translation on a general fibre.  As explained in the proof of \cite[Proposition 9, page 132]{Gi}, this yields the existence of
an element~$\Delta\in \mathrm{Pic}(\mathrm{S})$ (depending on $\widehat{\phi}$) with $\Delta\cdot K_\mathrm{S}=0$ such that the action of $\widehat{\phi}$ on 
$\mathrm{Pic}(\mathrm{S})$ is given by
$$D\mapsto D-m(D\cdot K_\mathrm{S})\cdot \Delta+\gamma K_\mathrm{S},$$
where $\gamma$ is an integer depending on $D$ which can be computed using the self-intersection: $$\gamma=-\frac{m^2}{2}(D\cdot
 K_\mathrm{S})\cdot \Delta^2+m(D\cdot \Delta).$$ 

We denote by $L$ the linear system of lines of $\p^2$ and by $\Lambda=\pi^{-1}\psi(L)$ its  transform on $\mathrm{S}$. The degree 
of $\phi^n$ is equal to the free intersection of $L$ with $\phi^n(L)$, which is equal to the free intersection of $\Lambda$ with 
$\widehat{\phi}^n(\Lambda)$.

The map $\widehat{\phi}^n$ acts on $\mathrm{Pic}(\mathrm{S})$ as 
$$D\mapsto D-m(D\cdot K_\mathrm{S})\cdot (n\Delta)+\left(-\frac{m^2}{2}(D\cdot K_\mathrm{S})\cdot (n\Delta)^2+m(D\cdot (n\Delta))\right) 
K_\mathrm{S}.$$
This yields 
$$\begin{array}{rcl}
\Lambda\cdot \widehat{\phi}^n(\Lambda)&=&
\Lambda^2-m(\Lambda\cdot K_\mathrm{S})\cdot (n\Delta\cdot \Lambda)+\left(-\frac{m^2}{2} (\Lambda\cdot K_\mathrm{S})(n\Delta)^2+m(n 
\Lambda\cdot \Delta)\right)(K_\mathrm{S}\cdot \Lambda)\\
&=&
\Lambda^2+\left(-\frac{m^2}{2} (\Lambda\cdot K_\mathrm{S})^2(\Delta)^2\right)\cdot n^2.
\end{array}$$
The free intersection between $\Lambda$ and $\widehat{\phi}^n(\Lambda)$ is thus equal to $$\Lambda^2+\left(-\frac{m^2}{2} (\Lambda\cdot 
K_\mathrm{S})^2(\Delta)^2\right)\cdot n^2-\sum_{i=1}^k \mu_i(\Lambda)\cdot \mu_i(\widehat{\phi}^n(\Lambda)),$$ where $\mu_i(\Lambda)$ and 
$\mu_i(\widehat{\phi}^n(\Lambda))$ denote the multiplicities of respectively $\Lambda$ and $\widehat{\phi}^n(\Lambda)$ at the $r$ base-points 
of $\Lambda$. Since $\widehat{\phi}$ is an automorphism of $\mathrm{S}$, the contribution given by the base-points is bounded, so we find 
that $$\lim\limits_{n\to +\infty} \frac{\deg(\phi^n)}{n^2}={m^2}(\Lambda\cdot K_\mathrm{S})^2\cdot\left(\frac{-\Delta^2}{2}\right).$$
Note that $\Lambda$ is the lift by $\pi^{-1}$ of the homaloidal linear system $\psi(L)$, so $\Lambda\cdot (-K_\mathrm{S})\ge 3$, and 
equality holds if and only if $\Lambda$ has no base-point. This shows that the minimum among all homaloidal systems is attained when 
$\Lambda$ has no base-point; we get $\kappa(\phi)=9{m^2}\cdot\left(\frac{-\Delta^2}{2}\right)$ and $a=\Lambda\cdot K_\mathrm{S}$.

Let us prove that $-\Delta^2$ is a positive even number. To do this, we take the orthogonal basis $L,E_1,\dots,E_9$ of $\Pic{\mathrm{S}}$, where $L$ is the pull-back of a line of $\p^2$ and $E_1,\dots,E_9$ are the exceptional divisors associated to the points blown-up. Writing $$\Delta=dL-\sum_{i=1}^9 a_i E_i,$$ the equality $\Delta \cdot K_\mathrm{S}=0$ implies that $\sum a_i=3d$. Modulo $2$, we have $\sum (a_i)^2\equiv \sum a_i\equiv 3d\equiv d^2$, so $\Delta^2=d^2-\sum (a_i)^2$ is even. Applying Cauchy-Schwarz to $(1,\dots,1)$ and $(a_1,\dots,a_9)$, we get $9\sum (a_i)^2\ge \left(\sum a_i\right)^2$, and equality holds only when all $a_i$ are equal. This latter would imply that $\Delta$ is a multiple of $K_\mathrm{S}$, and thus that  $\widehat{\phi}$ acts trivially on $\mathrm{Pic}(\mathrm{S})$. Hence we have $9\sum (a_i)^2> \left(\sum a_i\right)^2=(3d)^2$, which implies that $-\Delta^2=\sum (a_i)^2-d^2>0$.

The fact that $-\Delta^2$ is a positive even number implies that $\kappa(\phi)=9{m^2}\cdot\left(\frac{-\Delta^2}{2}\right)$ is a positive integer divisible by $9$. The equality 
$\kappa(\phi)=\lim\limits_{k\to +\infty} \frac{\deg(\phi^k)}{k^2}$ is equivalent to the fact that $\Lambda$ has no base-point, which 
corresponds to say that~$\psi^{-1}\pi$ is a birational morphism, or equivalently that the unique pencil of elliptic curves invariant 
by $\phi$ is an Halphen pencil. 
\end{proof}

\begin{cor}\label{Cor:ConjHalphen}
Let $\phi\in \Bir(\p^2)$ be an Halphen twist. The integer $\kappa(\phi)$ is a birational invariant which satisfies $\kappa(\phi^m)=
m^2\kappa(\phi)$ for any $m\in\z$. In particular, the maps $\phi^n$ and $\phi^m$ are not conjugate if $\vert m\vert\not=\vert n\vert$.  
\end{cor}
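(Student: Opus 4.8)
The plan is to derive everything from Proposition~\ref{Pro:Halphen}. The key formula is that for any $\psi \in \Bir(\p^2)$, if we write $\psi \phi \psi^{-1}$ and look at the minimal-degree realisation, Proposition~\ref{Pro:Halphen}(1) tells us the quantity $\kappa(\phi) = \min_\psi \lim_{k\to\infty} \deg(\psi\phi^k\psi^{-1})/k^2$ is well-defined and lies in $\mathbb{Q}_{>0}$; moreover it is manifestly a conjugacy invariant, since replacing $\phi$ by $\theta\phi\theta^{-1}$ merely reindexes the set $\{\psi\}$ over which the minimum is taken (compose with $\theta^{-1}$). So the first sentence (birational invariance of $\kappa$) is immediate from the definition.

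Next I would establish the scaling law $\kappa(\phi^m) = m^2 \kappa(\phi)$. From the very definition of the limit, $\lim_{k\to\infty}\deg((\phi^m)^k)/k^2 = \lim_{k\to\infty} \deg(\phi^{mk})/k^2 = m^2 \lim_{j\to\infty}\deg(\phi^j)/j^2$, and the same identity holds after conjugating $\phi$ by any $\psi$ (note $\phi^m$ is again an Halphen twist, preserving the same pencil as $\phi$). Taking the minimum over all $\psi$ of both sides and using that the minimum of $m^2$ times a family equals $m^2$ times the minimum, we get $\kappa(\phi^m) = m^2\kappa(\phi)$. One should also note $\kappa(\phi^{-1}) = \kappa(\phi)$, which follows since $\deg(\phi^{-k}) = \deg(\phi^k)$ for all $k$ (inverse of a Cremona map has the same degree), so actually $\kappa(\phi^m)$ depends only on $|m|$, giving $\kappa(\phi^m) = m^2\kappa(\phi)$ for all $m \in \z$.

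Finally, for the last assertion: if $\phi^n$ and $\phi^m$ were conjugate in $\Bir(\p^2)$, then by invariance $\kappa(\phi^n) = \kappa(\phi^m)$, i.e.\ $n^2\kappa(\phi) = m^2\kappa(\phi)$. Since $\phi$ is an Halphen twist, $\kappa(\phi) > 0$ by Proposition~\ref{Pro:Halphen}(1), so we may cancel it and obtain $n^2 = m^2$, hence $|m| = |n|$. Contrapositively, if $|m| \neq |n|$ the maps $\phi^m$ and $\phi^n$ are not conjugate. There is essentially no obstacle here — the whole content has already been done inside Proposition~\ref{Pro:Halphen}; the only thing to be slightly careful about is that $\phi^m$ is still an Halphen twist (so that $\kappa$ is even defined for it), which holds because $\phi^m$ preserves the same unique elliptic pencil and has quadratic degree growth, and that the passage to the minimum commutes with multiplication by the positive constant $m^2$ and with reindexing by conjugation.

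\begin{proof}
The quantity $\kappa(\phi)$ is a conjugacy invariant directly from its definition in Proposition~\ref{Pro:Halphen}: for $\theta\in\Bir(\p^2)$, the map $\theta\phi\theta^{-1}$ is again an Halphen twist, and as $\psi$ runs over $\Bir(\p^2)$ so does $\psi\theta$, whence $\{\lim_k \deg(\psi(\theta\phi\theta^{-1})^k\psi^{-1})/k^2\} = \{\lim_k \deg((\psi\theta)\phi^k(\psi\theta)^{-1})/k^2\}$ and the two minima agree.

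For $m\in\z\setminus\{0\}$, the map $\phi^m$ preserves the same (unique) pencil of elliptic curves as $\phi$ and has quadratic degree growth, so it is an Halphen twist and $\kappa(\phi^m)$ is defined. For any $\psi\in\Bir(\p^2)$, writing $\eta=\psi\phi\psi^{-1}$,
$$\lim_{k\to+\infty}\frac{\deg(\psi\phi^{mk}\psi^{-1})}{k^2}=\lim_{k\to+\infty}\frac{\deg(\eta^{mk})}{k^2}=m^2\lim_{j\to+\infty}\frac{\deg(\eta^{j})}{j^2}=m^2\lim_{k\to+\infty}\frac{\deg(\psi\phi^{k}\psi^{-1})}{k^2},$$
using for the middle equality that $\deg(\eta^j)=\deg(\eta^{-j})$ (so the limit along $mk$ for $m<0$ equals the limit along $|m|k$). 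Taking the infimum over $\psi$ of the outer terms and using that multiplication by $m^2>0$ commutes with taking the minimum, we obtain $\kappa(\phi^m)=m^2\kappa(\phi)$; in particular $\kappa(\phi^m)$ depends only on $|m|$.

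Now suppose $\phi^n$ and $\phi^m$ are conjugate in $\Bir(\p^2)$. By the invariance just established, $\kappa(\phi^n)=\kappa(\phi^m)$, that is $n^2\kappa(\phi)=m^2\kappa(\phi)$. Since $\phi$ is an Halphen twist, $\kappa(\phi)\in\mathbb{Q}_{>0}$ by Proposition~\ref{Pro:Halphen}, so $n^2=m^2$ and $|m|=|n|$. Equivalently, if $|m|\neq|n|$ then $\phi^m$ and $\phi^n$ are not conjugate.
\end{proof}
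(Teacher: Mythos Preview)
Your proof is correct and follows exactly the approach the paper takes: the paper's own proof is the single sentence ``Is a direct consequence of Proposition~\ref{Pro:Halphen}'', and your argument simply unpacks that consequence---conjugacy invariance from the definition of $\kappa$ as a minimum over all conjugates, the scaling $\kappa(\phi^m)=m^2\kappa(\phi)$ from the elementary reindexing of the degree limit (using $\deg(\eta^{-j})=\deg(\eta^j)$ for negative $m$), and the final implication from $\kappa(\phi)>0$.
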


\begin{proof}
Is a direct consequence of Proposition~\ref{Pro:Halphen}.
\end{proof}

Let us give an example where $\kappa(\phi)$ is not an integer.

\begin{eg}
Let $\Lambda$ be the pencil of cubic curves of $\p^2$ given by  $\lambda(x^2y+z^3+y^2z)+\mu(x^2z+y^3+yz^2)=0$, $(\lambda:\mu)\in \p^1$. The pencil is invariant by the automorphism $\alpha\in \Aut(\p^2)$ defined by $(x:y:z)\mapsto (\im x:-y:z)$.
 
 Denote by $\pi\colon \mathrm{S}\to \p^2$ the blow-up of the base-points of $\Lambda$, which are $7$ proper points of $\p^2$ and $2$ infinitely near points. More precisely, the $7$ proper points are
  $$\begin{array}{c}
  p_1=(1:0:0),\\
   p_2=(\im \sqrt{2}:1:1),\, p_3=\alpha(p_2)=(-\sqrt{2}:-1:1),\\
    p_4=\alpha(p_3)=(-\im \sqrt{2}:1:1),\, p_5=\alpha(p_4)=(\sqrt{2}:-1:1),\\
   p_6=(0:\im: 1),\, p_7=\alpha(p_6)=(0:-\im:1).\end{array}$$
The last two points are the following: the point $p_8$ is infinitely near to $p_6$, corresponding to the tangent direction of the line $y=\im z$, and $p_9$ is infinitely near to $p_7$, corresponding to the tangent direction of the line $y=-\im z$.

The surface $\mathrm{S}$ inherits an elliptic fibration $\mathrm{S}\to \p^1$, and the lift of $\alpha$ yields an automorphism $\widehat{\alpha}=\pi\alpha\pi^{-1}$ of~$\mathrm{S}$. Denote by $E_i\in \Pic{\mathrm{S}}$ the divisor of self-intersection $-1$ corresponding to the point $p_i$. If $i\not=6$, $7$, then~$E_i$ corresponds to a $(-1)$-curve of $\mathrm{S}$; and $E_6$, $E_7$ correspond to two reducible curves of $\mathrm{S}$. 

For any $\Delta\in \Pic{\mathrm{S}}$ satisfying $\Delta\cdot K_\mathrm{S}=0$, we denote by $\iota_\Delta\in \Aut(\mathrm{S})$ the automorphism which restricts on a general fibre $C$ to the translation given by the divisor $\Delta\vert_C$.  If $\Delta^2=-2$, the action of $\iota_\Delta$ on $\Pic{\mathrm{S}}$ is given by (\emph{see} the proof of Proposition~\ref{Pro:Halphen})
$$D\mapsto D-(D\cdot K_\mathrm{S})\cdot \Delta+\left(D\cdot
 (K_\mathrm{S}+\Delta)\right) \cdot K_\mathrm{S}.$$
 
 For any automorphism $\sigma\in \Aut(\mathrm{S})$, one can check that $\iota_{\sigma(\Delta)}=\sigma\, \iota_\Delta \sigma^{-1}$. In particular, we have
$$(\widehat{\alpha}\,\iota_\Delta)^4=(\widehat{\alpha}\,\iota_\Delta\,\widehat{\alpha}^{-1})(\widehat{\alpha}^2\,\iota_\Delta\,\widehat{\alpha}^{-2})(\widehat{\alpha}^3\,\iota_\Delta\,\widehat{\alpha}^{-3})\iota_\Delta=\iota_{\widehat{\alpha}(\Delta)}\,\iota_{\widehat{\alpha}^2(\Delta)}\iota_{\widehat{\alpha}^3(\Delta)}\iota_{\Delta}=\iota_{\widehat{\alpha}(\Delta)+\widehat{\alpha}^2(\Delta)+\widehat{\alpha}^3(\Delta)+\Delta}.$$

Because of the action of $\alpha$ on the points $p_i$, we have $$\widehat{\alpha}(E_6)=E_7, \,\widehat{\alpha}(E_7)=E_6, \,\widehat{\alpha}(E_2)=E_3, \,\widehat{\alpha}(E_3)=E_4,\,\widehat{\alpha}(E_4)=E_5, \,\widehat{\alpha}(E_5)=E_2.$$

We now fix $\Delta\in \Pic{\mathrm{S}}$ to be the divisor $E_2-E_6$ (that satisfies $\Delta\cdot K_\mathrm{S}=0$ and $\Delta^2=-2$), and obtain
$$\widehat{\alpha}(\Delta)+\widehat{\alpha}^2(\Delta)+\widehat{\alpha}^3(\Delta)+\Delta=E_2+E_3+E_4+E_5-2E_6-2E_7,$$
which has square $-8$. In particular, the number $\kappa$ associated to 
$(\widehat{\alpha}\iota_\Delta)^4=\iota_{\widehat{\alpha}(\Delta)+\widehat{\alpha}^2(\Delta)+\widehat{\alpha}^3(\Delta)+\Delta}$ is 
$$-9\frac{(\iota_{\widehat{\alpha}(\Delta)+\widehat{\alpha}^2(\Delta)+\widehat{\alpha}^3(\Delta)+\Delta})^2}{2}=36.$$
This shows, by Corollary~\ref{Cor:ConjHalphen}, that $\kappa(\phi)=\frac{9}{4}$, where $\phi$ is the birational map of $\p^2$ conjugate to $\widehat{\alpha}\,\iota_\Delta$ by $\pi^{-1}$, namely $\alpha\pi\,\iota_\Delta\pi^{-1}$.
\end{eg}

\section{Applications}\label{Sec:applications}

\subsection{Birational maps having two conjugate iterates}

\begin{lem}\label{Lem:EmbBSGrowth}
Let $\phi$ denote a birational map of $\mathbb{P}^2$. Assume that $\phi^n$ and $\phi^m$
 are conjugate and assume that $\vert m\vert\not=\vert n\vert$.
Then, $\phi$ is an elliptic and satisfies $\lambda(\phi)=1$ and $\mu(\phi)=0$. 
\end{lem}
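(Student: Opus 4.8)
The plan is to rule out the three possible behaviors of the degree sequence that are incompatible with the hypothesis, and to do so by exhibiting, in each case, a numerical invariant attached to $\phi$ that scales in a fixed way under taking powers, and is preserved under conjugation. If $\phi^n$ is conjugate to $\phi^m$, then any such conjugation invariant must take the same value on $\phi^n$ and on $\phi^m$; combined with the scaling law this will force $|m| = |n|$ in each of the excluded cases, hence none of them can occur and we land in the elliptic case.

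First I would treat the hyperbolic case. The first dynamical degree $\lambda(\phi) = \lim_{k\to\infty}(\deg\phi^k)^{1/k}$ is invariant under conjugation, and satisfies $\lambda(\phi^k) = \lambda(\phi)^{|k|}$ for all $k \in \mathbb{Z}$. If $\phi$ were hyperbolic we would have $\lambda(\phi) > 1$, so $\lambda(\phi)^{|n|} = \lambda(\phi^n) = \lambda(\phi^m) = \lambda(\phi)^{|m|}$ would force $|n| = |m|$, a contradiction. Hence $\lambda(\phi) = 1$ and $\phi$ is elliptic, a Jonqui\`eres twist, or an Halphen twist. Next I would exclude the Jonqui\`eres case: by Corollary~\ref{Cor:conj}, if $\phi$ is a Jonqui\`eres twist and $\phi^m$, $\phi^n$ are conjugate, then $|m| = |n|$, contradiction. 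Similarly, the Halphen case is excluded by Corollary~\ref{Cor:ConjHalphen}, which says $\kappa(\phi^m) = m^2\kappa(\phi)$ with $\kappa(\phi) > 0$ a conjugation invariant, so $m^2\kappa(\phi) = n^2\kappa(\phi)$ gives $|m| = |n|$, again a contradiction. Therefore $\phi$ is elliptic.

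It then remains to check $\mu(\phi) = 0$. Since $\phi$ is elliptic, $\{\deg\phi^k\}_{k}$ is bounded, so by Proposition~\ref{Prop:ellipticinfiniteorder} (or, in the finite-order case, trivially) $\phi$ is conjugate to an automorphism of a smooth projective surface, namely $\p^2$; by Corollary~\ref{Cor:MuInvariant} this gives $\mu(\phi) = 0$. Alternatively, and more directly, one can argue that $\mu$ is a conjugation invariant with $\mu(\phi^k) = |k|\mu(\phi)$ (this is recorded at the start of Section~\ref{Sec:Basepointgrowth}), so $|n|\mu(\phi) = \mu(\phi^n) = \mu(\phi^m) = |m|\mu(\phi)$ with $|m| \neq |n|$ forces $\mu(\phi) = 0$; this second route does not even need the elliptic classification.

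There is essentially no hard step here: the whole argument is a bookkeeping exercise assembling invariants already established in the paper ($\lambda$, $\mu$, and the Jonqui\`eres and Halphen invariants), together with their conjugation-invariance and their scaling laws under powers. The only point requiring a moment's care is making sure the finite-order case of an elliptic $\phi$ is not inadvertently excluded — but there $\phi^n = \phi^m$ whenever $n \equiv m$ modulo the order, and in any case $\mu(\phi) = 0$ holds trivially since a finite-order element is conjugate to an automorphism, so the statement of the lemma is vacuously consistent. The main conceptual content is simply the observation that "two distinct-exponent iterates are conjugate" is a very restrictive condition precisely because every growth-type invariant is homogeneous of positive degree in the exponent.
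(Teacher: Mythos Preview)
Your proof is correct and follows essentially the same approach as the paper: use the conjugation-invariance and power-scaling of $\lambda$ and $\mu$ to get $\lambda(\phi)=1$ and $\mu(\phi)=0$, and invoke Corollaries~\ref{Cor:conj} and~\ref{Cor:ConjHalphen} to exclude the Jonqui\`eres and Halphen cases. The paper just orders the steps slightly differently (deriving $\lambda(\phi)=1$ and $\mu(\phi)=0$ simultaneously at the outset, then ruling out the twist cases), and goes straight to your second, direct argument for $\mu(\phi)=0$ rather than passing through Proposition~\ref{Prop:ellipticinfiniteorder}.
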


\begin{proof}
The map $\phi^m$ is conjugate to $\phi^n$ in $\Bir(\p^2)$ so one gets $\lambda(\phi)^{\vert m\vert}=\lambda(\phi)^{\vert n\vert}$ and 
${\vert m\vert}\cdot \mu(\phi)={\vert n\vert}\cdot \mu(\phi)$. This yields $\lambda(\phi)=1$ and $\mu(\phi)=0$.

The fact that $\lambda(\phi)=1$ implies that $\phi$ is elliptic, or a Jonqui\`eres or Halphen twist. The Jonqui\`eres and Halphen cases are impossible (Corollaries~\ref{Cor:conj} and~\ref{Cor:ConjHalphen}).
\end{proof}

\begin{pro}\label{Pro:ConjYplus1}
Let $\phi$ denote a birational map of $\mathbb{P}^2$ of infinite order. Assume that $\phi^n$ and $\phi^m$
 are conjugate and assume that $\vert m\vert\not=\vert n\vert$. Then, $\phi$ is conjugate to an automorphism of $\C^2$ of the form  
$(x,y)\mapsto (\alpha x,y+1)$, where $\alpha\in \C^{*}$ such that $\alpha^{m+n}=1$ or $\alpha^{m-n}=1$.
 
In particular, if $\phi$ is conjugate to $\phi^n$ for any positive integer $n$, then $\phi$ is conjugate to $(x,y)\mapsto (x,y+1)$.
 \end{pro}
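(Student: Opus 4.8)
The plan is to combine Lemma~\ref{Lem:EmbBSGrowth} with the classification of elliptic maps of infinite order (Proposition~\ref{Prop:ellipticinfiniteorder}) and the conjugacy analysis already carried out in Corollary~\ref{Cor:Lin}. First I would invoke Lemma~\ref{Lem:EmbBSGrowth}: since $\phi^n$ and $\phi^m$ are conjugate with $|m|\neq|n|$, the map $\phi$ satisfies $\lambda(\phi)=1$ and $\mu(\phi)=0$, and moreover $\phi$ is neither a Jonqui\`eres nor an Halphen twist, hence $\phi$ is elliptic. Being elliptic of infinite order, Proposition~\ref{Prop:ellipticinfiniteorder} applies, so $\phi$ is conjugate to an automorphism of $\p^2$ restricting on some $\C^2$ either to a diagonal map $(x,y)\mapsto(\alpha x,\beta y)$ (with the stated kernel condition) or to an almost-diagonal map $(x,y)\mapsto(\alpha x,y+1)$.

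Next I would rule out the diagonal case and pin down $\alpha$ in the almost-diagonal case; but this is exactly the content of Corollary~\ref{Cor:Lin}. Note first that $mn\neq 0$ because $\phi$ has infinite order, so $|m|\neq|n|$ really gives $m\neq\pm n$. Corollary~\ref{Cor:Lin} then tells us directly that $\phi$ is conjugate to $(x,y)\mapsto(\alpha x,y+1)$ with $\alpha^{m+n}=1$ or $\alpha^{m-n}=1$. So the first assertion follows by simply chaining these three results together; there is essentially no new computation to do.

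For the ``in particular'' clause, suppose $\phi$ is conjugate to $\phi^n$ for every positive integer $n$. Taking any $n\geq 2$ we are in the situation above with $m=1$, so $\phi$ is conjugate to $(x,y)\mapsto(\alpha x,y+1)$ with $\alpha^{n+1}=1$ or $\alpha^{n-1}=1$; in either case $\alpha$ is a root of unity. Now I would exploit that this must hold simultaneously for all $n$: choosing two values of $n$ (for instance $n=2$ and $n=3$, giving $\alpha\in\{\alpha:\alpha^3=1\text{ or }\alpha=1\}$ and $\alpha\in\{\alpha:\alpha^4=1\text{ or }\alpha^2=1\}$) forces $\alpha=1$. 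Alternatively, and more cleanly: if $\alpha$ is a primitive $d$-th root of unity with $d>1$, pick $n$ so that neither $n+1$ nor $n-1$ is divisible by $d$ (e.g.\ $n\equiv 2\pmod d$ works once $d\geq 3$, and $d=2$ is handled by $n=2$), contradicting the requirement. Hence $\alpha=1$ and $\phi$ is conjugate to $(x,y)\mapsto(x,y+1)$.

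The argument is almost entirely a matter of assembling earlier results, so I do not expect a genuine obstacle; the only point requiring a little care is the ``in particular'' clause, where one must check that no nontrivial root of unity $\alpha$ can satisfy ``$\alpha^{n+1}=1$ or $\alpha^{n-1}=1$'' for all $n$ — a short elementary argument about residues modulo the order of $\alpha$, which I would spell out explicitly since it is the only step not already packaged in a cited statement.
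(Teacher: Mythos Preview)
Your proposal is correct and follows the same route as the paper, whose proof is the one-line ``Follows from Corollary~\ref{Cor:Lin}, Corollary~\ref{Cor:ConjHalphen} and Lemma~\ref{Lem:EmbBSGrowth}''; your explicit mention of Proposition~\ref{Prop:ellipticinfiniteorder} is just unpacking what Corollary~\ref{Cor:Lin} already contains, and the citation of Corollary~\ref{Cor:ConjHalphen} is already absorbed into the proof of Lemma~\ref{Lem:EmbBSGrowth}.

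One small slip to fix in your ``in particular'' paragraph: the claim that $n\equiv 2\pmod d$ works once $d\geq 3$ fails for $d=3$, since then $n+1\equiv 0\pmod 3$. Your first argument with $n=2$ and $n=3$ is fine and already forces $\alpha=1$, so just drop or correct the parenthetical example (for $d=3$ take $n=3$, and for $d\geq 4$ take $n=2$).
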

 
\begin{proof}
Follows from Corollary~\ref{Cor:Lin}, Corollary~\ref{Cor:ConjHalphen} and Lemma~\ref{Lem:EmbBSGrowth}.
\end{proof}

\subsection{Morphisms of Baumslag-Solitar groups in the Cremona group}\label{Subsec:Baumslag}

For any integers $m$, $n$ such that $mn\not=0$, the Baumslag-Solitar group~$\mathrm{BS}(m,n)$ is defined by the following presentation
\begin{align*}
&\mathrm{BS}(m,n)=\langle r,\, s\,\vert\, rs^mr^{-1}=s^n\rangle.
\end{align*}

Recall that if $\mathrm{G}$ is a group, the derived groups of $\mathrm{G}$ are
\begin{align*}
&\mathrm{G}^{(0)}=\mathrm{G}, && \mathrm{G}^{(i)}=\left[\mathrm{G}^{(i-1)},\mathrm{G}^{(i-1)}\right]=\langle ghg^{-1}h^{-1}\,\vert\,g,
\,h\in\mathrm{G}^{(i-1)}\rangle \,\text{ for all $i\geq 1$},
\end{align*}
and  that $\mathrm{G}$ is \emph{solvable} if there exists an integer $N$ such that $\mathrm{G}^{(N)}=\{\mathrm{id}\}$. 

\medskip

The groups $\mathrm{BS}(m,n)$ (resp. the subgroups of finite index of $\mathrm{BS}(m,n)$) are solvable if and only if $\vert m\vert=1$ 
or $\vert n\vert =1$ (\emph{see} \cite[Proposition A.6]{So}).

\medskip

%

A group $\mathrm{G}$ is said to be {\it residually finite} if for any $g$ in $\mathrm{G}\setminus\{\mathrm{id}\}$ there exist a finite 
group $\mathrm{H}$ and a group homomorphism $\Theta \colon\mathrm{G}\to\mathrm{H}$ such that $\Theta(g)$ belongs to 
$\mathrm{H}\setminus\{\mathrm{id}\}$. The group $\mathrm{BS}(m,n)$ is residually finite if and only if $\vert m\vert=1$ or $\vert 
n\vert=~1$ or $\vert m\vert=\vert n\vert$ (\emph{see} \cite{Mes}). Let $V$ be an affine algebraic variety; according to~\cite{BaLu} any 
subgroup of finite index of the automorphisms group of $V$ is residually finite. Therefore if $\vert m \vert\not=\vert n\vert$ and 
$\vert m \vert,\vert n\vert\not=1$  there is no embedding of $\mathrm{BS}(m,n)$ into the group of polynomial automorphisms of the plane.
 There is an other proof using the amalgated structure of the group of polynomial automorphisms of the plane and the fact that 
$\mathrm{BS}(m,n)$ is not solvable (\cite[Proposition 2.2]{CaLa}).

\begin{lem}\label{Lem:NoBoundBS}
Let $\rho$ be a homomorphism from $\mathrm{BS}(m,n)=\langle r,\, s\,\vert\, rs^mr^{-1}=s^n\rangle$ to $\mathrm{Bir}(\mathbb{P}^2)$. 
Assume that~$\vert m\vert$,$\vert n\vert$ and $1$ are distinct. If $\rho(s)$ has infinite order, the image of the subgroup of finite 
index $\langle r,\,s^m\,\vert\,rs^{m^2}r^{-1}=s^{mn}\rangle$ of $\mathrm{BS}(m,n)$ is solvable.
\end{lem}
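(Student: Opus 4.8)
The plan is to use the structure theorem for elliptic maps of infinite order (Proposition~\ref{Prop:ellipticinfiniteorder}) to restrict drastically the possible shape of $\rho(s)$, then analyze $\rho(r)$ using the defining relation. First I would observe that in the subgroup $\mathrm{H}=\langle r,\,s^m\,\vert\,rs^{m^2}r^{-1}=s^{mn}\rangle$, the element $t=s^m$ satisfies $rt^mr^{-1}=t^n$, so $\mathrm{H}\cong \mathrm{BS}(m,n)$ again (this is the standard fact that a suitable finite-index subgroup of $\mathrm{BS}(m,n)$ is isomorphic to $\mathrm{BS}(m,n)$); replacing $\mathrm{BS}(m,n)$ by $\mathrm{H}$ allows us to assume, without loss of generality, that $\rho(s)$ has infinite order and that $\rho(s)^m$ and $\rho(s)^n$ are conjugate in $\Bir(\p^2)$. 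Since $\vert m\vert$, $\vert n\vert$ and $1$ are distinct, $mn\neq 0$ and $\vert m\vert\neq\vert n\vert$, so Proposition~\ref{Pro:ConjYplus1} (equivalently Lemma~\ref{Lem:EmbBSGrowth} together with Corollary~\ref{Cor:Lin}) applies to $\phi=\rho(s)$: up to conjugation in $\Bir(\p^2)$ we may assume that $\rho(s)$ is the almost-diagonal automorphism $(x,y)\mapsto(\alpha x,\,y+1)$ of $\C^2$ for some $\alpha\in\C^*$ with $\alpha^{m+n}=1$ or $\alpha^{m-n}=1$; in particular $\alpha$ is a root of unity, say of order $d$.

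Next I would locate $\rho(r)$ inside a centraliser. The relation $rs^mr^{-1}=s^n$ does not say $r$ centralises $s$, so instead I pass to a power: since $\rho(s)$ is conjugate (via $\rho(r)$) to the map $(x,y)\mapsto(\alpha^{n/m}\cdot\text{(something)},\dots)$ — more carefully, $\rho(r)\rho(s)^m\rho(r)^{-1}=\rho(s)^n$ shows that $\rho(s)^m$ and $\rho(s)^n$ are conjugate by $\rho(r)$, and a direct computation gives $\rho(s)^m\colon (x,y)\mapsto(\alpha^m x,\,y+m)$ and $\rho(s)^n\colon (x,y)\mapsto(\alpha^n x,\,y+n)$; after the normalisation $(x,y)\mapsto (x,\,y/m)$ resp.\ $(x,\,y/n)$ these become $(x,y)\mapsto(\alpha^m x,\,y+1)$ and $(x,y)\mapsto(\alpha^n x,\,y+1)$, conjugate (by Proposition~\ref{Prop:ConjDiagAlmostDiag}(3)) iff $\alpha^m=\alpha^{\pm n}$. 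Then I would use the explicit description of $\rho(r)$ as a conjugating element. The cleanest route: consider the subgroup $\langle \rho(s)^{m},\,\rho(r)\rangle$; because $\rho(r)$ conjugates $\rho(s)^m$ to $\rho(s)^n$ and both lie in the abelian-by-(small) structure coming from Lemma~\ref{Lem:centtransl}, one shows that $\rho(\mathrm{H})$ normalises an abelian group. Concretely, Lemma~\ref{Lem:centtransl} describes $\mathrm{C}(\rho(s))$ as maps $(x,y)\dasharrow(\eta(x),\,y+R(x))$ with $\eta\in\PGL(2,\C)$ commuting with $x\mapsto\alpha x$ and $R(\alpha x)=R(x)$; I would show $\rho(r)$ normalises (a conjugate of) this group, hence $\rho(\mathrm{H})$ sits inside an extension of $\mathrm{C}(\rho(s))$ by a cyclic group, and $\mathrm{C}(\rho(s))$ — being $\{(\eta(x),\,y+R(x))\}$ — is metabelian: its derived subgroup consists of translations $(x,y)\mapsto(x,\,y+S(x))$, which is abelian. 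An extension of a metabelian group by a cyclic group is solvable of derived length $\le 3$, giving the conclusion.

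The main obstacle I anticipate is the second paragraph's bookkeeping: pinning down precisely which group $\rho(r)$ normalises. The point $\rho(r)$ need not centralise $\rho(s)$, only conjugate a power of it to another power, so one cannot simply invoke Lemma~\ref{Lem:centtransl} for $\rho(r)$; instead one must argue that the unique pencil (the pencil of lines $x=\text{const}$, equivalently the fibration $(x,y)\mapsto x$ preserved by $\rho(s)$, characterised invariantly as the fibration whose general fibre is pointwise fixed by no power of $\rho(s)$ yet globally invariant) is preserved by $\rho(r)$ up to the ambiguity allowed by $\PGL(2,\C)$ acting on the base. Here the hypothesis $\vert m\vert\ne\vert n\vert$ is essential — it forces $\rho(s)$ to be of translation type rather than diagonal type, and the translation part is what makes the relevant invariant pencil unique and hence $\rho(r)$-invariant. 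Once the invariance of this fibration is established, $\rho(\mathrm{H})$ acts on it, inducing a homomorphism $\rho(\mathrm{H})\to\PGL(2,\C)$ whose image is generated by two commuting-up-to-torsion Möbius maps, hence solvable, and whose kernel consists of fibre-preserving maps $(x,y)\dasharrow(x,\,ay+B(x))$ — an obviously solvable (metabelian) group. Combining the two gives solvability of $\rho(\mathrm{H})$, which is exactly the claim.
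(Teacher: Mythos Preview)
Your overall strategy matches the paper's: normalise $\rho(s)$ via Proposition~\ref{Pro:ConjYplus1} to $(x,y)\mapsto(\alpha x,y+1)$ with $\alpha$ a root of unity, then place $\langle\rho(r),\rho(s^m)\rangle$ inside a solvable Jonqui\`eres-type group. The gap is exactly where you anticipated it, and you do not close it. Your normaliser sketch does not work as written: $\rho(r)$ conjugates $\mathrm{C}(\rho(s)^m)$ to $\mathrm{C}(\rho(s)^n)$, and these two centralisers are in general distinct subgroups, so you cannot conclude that $\rho(r)$ normalises either one. Your uniqueness claim is also problematic as phrased: the pencil $y=\mathrm{const}$ is equally $\rho(s)$-invariant, so ``general fibre pointwise fixed by no power, yet globally invariant'' only singles out the $x$-pencil if ``globally invariant'' is read as applying to each individual fibre under some power of $\rho(s)$. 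That reading is neither made explicit nor justified (one would have to check that the only irreducible curves invariant under $\rho(s)^d=(x,y+d)$ are the vertical lines, and then chase the conjugacy relation), so the argument as written is incomplete.

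The paper sidesteps all of this with a one-line trick. Since $\alpha^{m-n}=1$ or $\alpha^{m+n}=1$, the explicit map $\psi=(x,\tfrac{n}{m}y)$, respectively $\psi=(x^{-1},\tfrac{n}{m}y)$, satisfies $\psi\,\rho(s)^m\,\psi^{-1}=\rho(s)^n$. Hence $\tau:=\psi^{-1}\rho(r)$ commutes with $\rho(s)^m=(\alpha^m x,\,y+m)$, and now Lemma~\ref{Lem:centtransl} applies \emph{to $\tau$} and gives $\tau=(\eta(x),\,y+R(x))$. Thus $\rho(r)=\psi\tau=\big(\eta(x)^{\pm 1},\,\tfrac{n}{m}(y+R(x))\big)$ is explicit, visibly preserves the fibration $(x,y)\mapsto x$, and the solvability of $\langle\rho(r),\rho(s^m)\rangle$ is immediate. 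This explicit-conjugator device is the missing idea; with it, no uniqueness or normaliser argument is needed.
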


\begin{proof}By Proposition~\ref{Pro:ConjYplus1}, one can conjugate $\rho$ so that  $\rho(s)\colon(x,y)\dashrightarrow(\alpha x,y+1)$ 
where $\alpha\in \C^*$ and $\alpha^{m-n}=~1$ or $\alpha^{m+n}=~1$. Denoting respectively by $\psi$ the map $\left(x,\frac{n}{m}y\right)$ 
or $\left(x^{-1},\frac{n}{m}y\right)$ one has $\psi\rho(s)^m\psi^{-1}=\rho(s)^n$. So~$\rho(r)=\psi \tau $ where~$\tau$ commutes with 
$\rho(s)^m\colon(x,y)\mapsto(\alpha^m x,y+m)$. 

According to Lemma~\ref{Lem:centtransl}, one then has $\tau=\left(\eta(x),y+R(x)\right)$ for some $\eta\in \PGL(2,\C), 
\eta(\alpha^m x)=\alpha^m \eta(x)$, and some $R\in \C(x)$ satisfying $R(\alpha^m x)=R(x)$. And one gets $\rho(r)=\left(\eta(x)^{\pm 1},
\frac{n}{m}(y+R(x))\right)$.  

 The group generated by $\rho(s^m)$ and $\rho(r)$ is thus solvable.
\end{proof}

\begin{cor}\label{cor:BS}
If $\vert m\vert $, $\vert n\vert$ and $1$ are distinct, there is no embedding of $\mathrm{BS}(m,n)$ into the Cremona group.
\end{cor}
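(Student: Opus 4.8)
The plan is to deduce the statement from Lemma~\ref{Lem:NoBoundBS} by a short group-theoretic argument, reasoning by contradiction. Suppose $\rho\colon \mathrm{BS}(m,n)\to \Bir(\p^2)$ is an embedding. The first step is to observe that the generator $s$ has infinite order in $\mathrm{BS}(m,n)$: the assignment $s\mapsto (x\mapsto x+1)$, $r\mapsto (x\mapsto \frac{n}{m}x)$ extends to a homomorphism $\mathrm{BS}(m,n)\to \Aff(\r)$ (both $rs^mr^{-1}$ and $s^n$ act as $x\mapsto x+n$, so the defining relation is respected), and the image of $s$ is a non-trivial translation, hence of infinite order. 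Since $\rho$ is injective, $\rho(s)$ also has infinite order.

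As $\vert m\vert$, $\vert n\vert$ and $1$ are pairwise distinct, Lemma~\ref{Lem:NoBoundBS} applies to $\rho$ and tells us that $\rho(H)$ is solvable, where $H=\langle r,\,s^m\,\vert\,rs^{m^2}r^{-1}=s^{mn}\rangle$ is the indicated subgroup of finite index of $\mathrm{BS}(m,n)$. But $\rho$ restricted to $H$ is injective, so $H\cong \rho(H)$ is solvable.

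This is the desired contradiction: since $\vert m\vert\neq 1$ and $\vert n\vert\neq 1$, no subgroup of finite index of $\mathrm{BS}(m,n)$ is solvable (\cite[Proposition A.6]{So}), while $H$ is such a subgroup. Hence no embedding $\rho$ can exist.

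I do not expect any real obstacle inside this corollary: all the substantive work has already been carried out, in Lemma~\ref{Lem:NoBoundBS} and, upstream of it, in Proposition~\ref{Pro:ConjYplus1} (which pins down $\rho(s)$ up to conjugacy as an almost-diagonal automorphism) and in the computation of centralisers in Lemma~\ref{Lem:centtransl}. Within the present argument the only two points needing a word of care are recording that $s$ has infinite order --- so that the hypothesis of Lemma~\ref{Lem:NoBoundBS} is automatically satisfied once $\rho$ is assumed injective --- and passing from \og $\rho(H)$ solvable\fg{} to \og $H$ solvable\fg{} so as to contradict the quoted non-solvability of finite-index subgroups of $\mathrm{BS}(m,n)$.
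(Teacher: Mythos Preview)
Your proof is correct and follows essentially the same route as the paper's: apply Lemma~\ref{Lem:NoBoundBS} to conclude that a finite-index subgroup has solvable image, hence is itself solvable by injectivity, contradicting \cite[Proposition A.6]{So}. You are actually more careful than the paper in explicitly verifying that $s$ has infinite order in $\mathrm{BS}(m,n)$, so that the hypothesis of Lemma~\ref{Lem:NoBoundBS} is indeed met by any embedding; the paper leaves this implicit.
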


\begin{proof}
Lemma~\ref{Lem:NoBoundBS} shows that the image of any embedding would be virtually solvable, impossible when  $\vert m\vert $, $\vert n\vert$ and $1$ 
are distinct.
\end{proof}

\subsection{Embeddings of $\mathrm{GL}(2,\mathbb{Q})$ into the Cremona group}\label{Subsec:GL2Q}To simplify the notation, we will denote 
in this last section by $(\phi_1(x,y),\phi_2(x,y))$ the rational map $(x,y)\dasharrow (\phi_1(x,y),\phi_2(x,y))$ from $\C^2$ to $\C^2$.

Let us first give examples of embeddings of $\mathrm{GL}(2,\mathbb{Q})$ into the Cremona group.

\begin{eg}\label{Ex:GL2Q}
Let $k$ be an odd integer and let $\chi\colon \mathbb{Q}^{*}\to \mathbb{C}^{*}$ be a homomorphism such that $a\mapsto\frac{\chi(a^2)}{a^k}$ is injective. The morphism $\rho$ 
from $\mathrm{GL}(2,\mathbb{Q})$ to the Cremona group given by
$$\rho\left(\left[\begin{array}{cc}a & b\\ c & d\end{array}\right]\right)=\left(x\cdot \frac{\chi(ad-bc)}{(cy+d)^k},\frac{ay+b}{cy+d}
\right)$$
is an embedding. Note that $\rho(\mathrm{GL}(2,\mathbb{Q}))$ is conjugate to a subgroup of automorphisms of the $k$-th Hirzebruch 
surface $\mathbb{F}_k$. Changing $k$ gives then infinitely many non conjugate embeddings in the Cremona group.
\end{eg}
\begin{rem}Taking $k=1$ and $\chi$ the trivial map, Example~\ref{Ex:GL2Q} yields the embedding $\rho\colon \GL(2,\mathbb{Q})\to \Bir(\p^2)$ given by 
$$\rho\left(\left[\begin{array}{cc}a & b\\ c & d\end{array}\right]\right)=\left( \frac{x}{cy+d},\frac{ay+b}{cy+d}
\right),$$
which is obviously conjugate (by extending the actions to $\p^2$) to the classical embedding 
$$\widehat{\rho}\left(\left[\begin{array}{cc}a & b\\ c & d\end{array}\right]\right)=\left( {ax+by},{cx+dy}
\right).$$

\end{rem}

\begin{thm}\label{thm:GL2Q}
Let $\rho\colon \GL(2,\mathbb{Q})\to \Bir(\p^2)$ be an embedding of $\mathrm{GL}(2,\mathbb{Q})$ into the Cremona group, then up to 
conjugation $\rho$ is one of the embeddings described in Example~$\ref{Ex:GL2Q}$.
\end{thm}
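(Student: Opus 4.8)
The plan is to show that any embedding $\rho$ of $\GL(2,\Q)$ is forced, up to conjugacy in $\Bir(\p^2)$, onto the Hirzebruch-surface model of Example~\ref{Ex:GL2Q}. The group $\GL(2,\Q)$ contains the abelian subgroup of diagonal matrices, and in particular the one-parameter subgroups $s(t)=\left[\begin{smallmatrix}t&0\\0&1\end{smallmatrix}\right]$ and $u=\left[\begin{smallmatrix}1&1\\0&1\end{smallmatrix}\right]$; note $u$ has infinite order and commutes with nothing that would make it elliptic of the first type. First I would analyse a single well-chosen element of infinite order — say $\rho(u)$ — and argue it is elliptic. Indeed $u$ is conjugate in $\GL(2,\Q)$ to $u^2$ (by $s(2)$, since $s(2)us(2)^{-1}=u^2$, wait: $s(2)us(2)^{-1}=\left[\begin{smallmatrix}1&2\\0&1\end{smallmatrix}\right]=u^2$), so $\rho(u)$ and $\rho(u)^2$ are conjugate in $\Bir(\p^2)$. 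By Lemma~\ref{Lem:EmbBSGrowth}, $\rho(u)$ is elliptic with $\mu(\rho(u))=0$ and $\lambda(\rho(u))=1$, and by Proposition~\ref{Pro:ConjYplus1} it is conjugate to $(x,y)\mapsto(x,y+1)$ (the root-of-unity condition with $m=2,n=1$ forces $\alpha^{3}=1$ or $\alpha^{1}=1$; but more care is needed — one also has $s(t)us(t)^{-1}=u^{t}$ for every $t\in\Q^*$, so $\rho(u)$ is conjugate to $\rho(u)^q$ for all positive rationals $q$, which via the "in particular" clause of Proposition~\ref{Pro:ConjYplus1} pins it down to be conjugate to $(x,y)\mapsto(x,y+1)$ exactly).

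**Reduction to the Hirzebruch picture.** After conjugating so that $\rho(u)\colon(x,y)\mapsto(x,y+1)$, the key step is to compute the centraliser of $\rho(u)$ using Lemma~\ref{Lem:centtransl} (with $\alpha=1$): it is $\{(x,y)\dashrightarrow(\eta(x),y+R(x))\mid \eta\in\PGL(2,\C),\ R\in\C(x)\}$. Since all diagonal and upper-triangular matrices of $\GL(2,\Q)$ commute with $u$, their images lie in this group; in particular $\rho$ restricted to the Borel subgroup $B$ of upper-triangular matrices lands in a group of the form $\{(\eta(x),y+R(x))\}$. Projecting to the $x$-coordinate gives a homomorphism $B\to\PGL(2,\C)$, $g\mapsto\eta_g$. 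Now I bring in the lower-triangular one-parameter group $v(t)=\left[\begin{smallmatrix}1&0\\t&1\end{smallmatrix}\right]$, which together with $u$ and the torus generates all of $\GL(2,\Q)$; a symmetric argument (conjugating $\rho(v(1))$ and using that it too is conjugate to all its rational powers inside $\GL(2,\Q)$) controls $\rho(v(t))$. The plan is then to track how the $y$-coordinate transforms: the subgroup $\SL(2,\Q)$ acts on the $y$-line, and I expect the induced action $\SL(2,\Q)\to\PGL(2,\C)$ on the $x=$const. structure to be essentially the projective action on $\p^1$, i.e. $\rho(g)$ acts on $y$ as $y\mapsto\frac{ay+b}{cy+d}$, after a further conjugation normalising the embedding $\PGL(2,\Q)\hookrightarrow\PGL(2,\C)$ to be the standard one (using that any embedding of $\PGL(2,\Q)$, or $\mathrm{PSL}(2,\Q)$, into $\PGL(2,\C)$ is conjugate to the inclusion — this is a rigidity statement about $\PGL(2,\Q)$).

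**Pinning down the cocycle.** With the $y$-action normalised to $y\mapsto\frac{ay+b}{cy+d}$, what remains is to determine the first coordinate: $\rho(g)=\big(x\cdot F_g(y),\frac{ay+b}{cy+d}\big)$ for some rational function $F_g\in\C(y)^*$ (the $x$-coordinate must be linear in $x$ because $\rho(g)$ conjugates the flow $(x,y)\mapsto(x,y+1)$ to another such flow up to the $y$-change, so its partial derivative in $x$ is independent of $x$; and it has no additive term because a central $\lambda\cdot\mathrm{Id}$ must act, and scaling $y$ has already been used up — here I need the scalar centre to act as $x\mapsto \chi(\lambda^2) x$ on $x$ alone). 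The map $g\mapsto F_g$ is a crossed homomorphism for the action of $\GL(2,\Q)$ on $\C(y)^*$ by $y\mapsto\frac{ay+b}{cy+d}$; evaluating the cocycle relation on the generators and using that $\rho(u)$ fixes $x$ forces $F_u\equiv 1$, and propagating through $\SL(2,\Q)$ forces $F_g=(cy+d)^{-k}$ up to the cohomologically trivial ambiguity, for some integer $k$; the factor $\chi(\det)$ on the centre supplies the homomorphism $\chi\colon\Q^*\to\C^*$, and $k$ must be odd because $-\mathrm{Id}$ is central and $\rho(-\mathrm{Id})$ must be an involution whose square is the identity — computing $\rho(-\mathrm{Id})=(x\cdot\chi(1)/(-1)^k, y)=( (-1)^k x, y)$ is trivial for any $k$, so the oddness actually comes from requiring injectivity interacting with $\chi$ on $-1$, or more precisely from the structure of $H^1$; I would extract it from the relation $\rho\big(\left[\begin{smallmatrix}-1&0\\0&-1\end{smallmatrix}\right]\big)$ being the image of a square-root situation. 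Finally one checks the resulting formula is an embedding, recovering exactly Example~\ref{Ex:GL2Q}.

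**Main obstacle.** I expect the principal difficulty to be the rigidity step: showing the induced homomorphism $\SL(2,\Q)\to\PGL(2,\C)$ (or the finer statement needed to split the $x$ and $y$ coordinates globally, not just on each cyclic subgroup) is conjugate to the standard inclusion, and simultaneously organising the crossed-homomorphism computation so that the exponent $k$ and the character $\chi$ emerge cleanly. The elliptic-classification input (Propositions~\ref{Prop:ellipticinfiniteorder}, \ref{Pro:ConjYplus1} and Lemma~\ref{Lem:centtransl}) does the heavy lifting of locating one element and its centraliser; everything after that is a finite, if delicate, bookkeeping of how the rest of $\GL(2,\Q)$ sits inside that centraliser-type group, and the only genuinely non-formal point is the $\PGL(2,\Q)$-rigidity.
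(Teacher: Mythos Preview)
Your opening move is the same as the paper's: use that $u=\left[\begin{smallmatrix}1&1\\0&1\end{smallmatrix}\right]$ is conjugate in $\GL(2,\mathbb Q)$ to all its nonzero powers, apply Proposition~\ref{Pro:ConjYplus1} to normalise $\rho(u)=(x,y+1)$, and then use Lemma~\ref{Lem:centtransl} to control the Borel subgroup. From that point on, however, your outline diverges from the paper and develops a real gap.

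\textbf{The gap.} You assert that for every $g\in\GL(2,\mathbb Q)$ one has $\rho(g)=\big(x\cdot F_g(y),\ \nu_g(y)\big)$, and justify it by saying ``$\rho(g)$ conjugates the flow $(x,y)\mapsto(x,y+1)$ to another such flow''. This is not correct: for $g$ outside the Borel (e.g.\ $g=M=\left[\begin{smallmatrix}0&1\\-1&0\end{smallmatrix}\right]$), the conjugate $\rho(g)\rho(u)\rho(g)^{-1}=\rho(gug^{-1})$ is the image of a lower unipotent, about which you know nothing yet. So Lemma~\ref{Lem:centtransl} alone does not force the $x$-coordinate of $\rho(g)$ to be linear in $x$, nor the $y$-coordinate to depend on $y$ only. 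Without this, there is no well-defined homomorphism $\SL(2,\mathbb Q)\to\PGL(2,\mathbb C)$ on the $y$-line and your rigidity step has nothing to act on.

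\textbf{How the paper closes this.} The paper first pins down $\rho(\mathrm t_q)=(x,y+q)$ for all $q\in\mathbb Q$ (this takes a short but nontrivial argument: the $x$-part of $\rho(\mathrm t_{1/n})$ is a priori a finite-order M\"obius map, and one must rule out a nontrivial root of unity via Proposition~\ref{Prop:ConjDiagAlmostDiag}). Then, using the relation $\mathrm d_{m,n}\mathrm t_1\mathrm d_{m,n}^{-1}=\mathrm t_{m/n}$ and a non-injectivity argument for $(\mathbb Q^{*})^2\to\PGL(2,\mathbb C)$, it normalises the diagonal torus so that the scalars act as $\rho(\mathrm d_{a,a})=(\chi_{a,a}x,\,y)$ with $\chi_{2,2}$ of infinite order. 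The crucial point you are missing is that one now applies the \emph{other} centraliser lemma, Lemma~\ref{Lem:centraldiag} (for a diagonal map), to anything commuting with $\rho(\mathrm d_{2,2})$: this is what forces $\rho(M)=(xR(y),\nu(y))$. From there the relations $M\,\mathrm d_{a,1}=\mathrm d_{1,a}M$, $(M\mathrm t_1)^3=\mathrm{id}$ and $M^4=\mathrm{id}$ determine $R$, $\nu$, and in particular the oddness of $k$, all by explicit computation, with no appeal to any rigidity theorem for $\PGL(2,\mathbb Q)$.

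\textbf{Summary.} Your cohomological packaging (crossed homomorphism $g\mapsto F_g$) is a reasonable way to organise the endgame, but it only makes sense once you know every $\rho(g)$ lies in the group $\{(xF(y),\nu(y))\}$. Establishing that requires the centre of $\GL(2,\mathbb Q)$ and Lemma~\ref{Lem:centraldiag}, not a repetition of the $u$-argument; and once you have it, the paper's direct computation with $M$ is both shorter and more elementary than invoking rigidity of $\PGL(2,\mathbb Q)\hookrightarrow\PGL(2,\mathbb C)$. The oddness of $k$, which you flag as unclear, falls out immediately from $(M\mathrm t_1)^3=\mathrm{id}$ together with $\rho(M)^2$ having order exactly~$2$.
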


\begin{proof}
Let us set 
\begin{align*}
& \mathrm{t}_q=\left[\begin{array}{cc}1 & q\\ 0 & 1\end{array}\right] &&  \&&&\mathrm{d}_{m,n}=\left[\begin{array}{cc}m & 0\\ 0 & n
\end{array}\right], && q,\, m,\, n\in \mathbb{Q}.
\end{align*}
Remark that $\mathrm{t}_1$ is conjugate to $\mathrm{t}_n$ in~$\mathrm{GL}(2,\mathbb{Q})$, for any $n\in\mathbb{Z}\smallsetminus\{0\}$; 
we can then assume, after conjugation, that $\rho(\mathrm{t}_1)=(x,y+1)$ (Proposition \ref{Pro:ConjYplus1}). As~$\rho(\mathrm{t}_{1/n})$
 commutes with~$\rho(\mathrm{t}_1)$ there exist $A_n$ in $\mathrm{PGL}(2,\mathbb{C})$ and $R_n$ in~$\mathbb{C}(x)$ such that  
$$\rho(\mathrm{t}_{1/n})=(A_n(x),y+R_n(x))$$
(\emph{see} Lemma \ref{Lem:centtransl}). Let us prove now that $A_n(x)=x$.
Since $\mathrm{t}_{1/n}^n=\mathrm{t}_1$ the element $A_n$ is of finite order so~$A_n$ is conjugate to some $\xi x$ where $\xi$ is some 
root of unity. Hence $\rho(\mathrm{t}_{1/n})$ is conjugate to $(\xi x, y+Q(x))$ where~$Q\in~\C(x)$ satisfies $Q(x)+Q(\xi x)+\dots 
+Q(\xi^{n-1}x)=1$. The map $(\xi x, y+Q(x))$  is then conjugate to $\left(\xi x,y+\frac{1}{n}\right)$ by $\left(x,y-
\frac{\sum_{i=1}^{n-1} iQ(\xi^i x)}{n}\right)$. Since $\mathrm{t}_{1/n}$ is conjugate to $\mathrm{t}_1$,
 Proposition~\ref{Prop:ConjDiagAlmostDiag} implies that 
$\xi=1$, which achieves to show that $A_n(x)=x$. This implies, with equality $\rho(\mathrm{t}_{1/n})^n=\rho(\mathrm{t}_1)$, that 
$R_n(x)=1/n$. We thus have for any $q$ in $\mathbb{Q}$ $$\rho(\mathrm{t}_q)=(x,y+q).$$

From $\mathrm{d}_{m,n}\mathrm{t}_1\mathrm{d}_{m,n}^{-1}=\mathrm{t}_{m/n}$ one gets (using again Lemma \ref{Lem:centtransl}) that 
\begin{align*}
& \rho(\mathrm{d}_{m,n})=\left(\eta_{m,n}(x),\frac{m}{n}y+R_{m,n}(x)\right), && \eta_{m,n}\in\mathrm{PGL}(2,\mathbb{C}),\, 
R_{m,n}\in\mathbb{C}(x).
\end{align*}
The map $(\mathbb{Q}^{*})^2\to \PGL(2,\C)$ given by $(m,n)\mapsto \eta_{m,n}$ is a homomorphism, which cannot be injective. There exists 
thus one element $\mathrm{d}_{m,n}$ with $(m,n)\not=(1,1)$ such that $\rho(\mathrm{d}_{m,n})=\left(x,\frac{m}{n}y+R_{m,n}(x)\right).$
Note that $m\not=n$ since the centralizers of $\rho(\mathrm{d}_{m,n})$ and $\rho(\mathrm{t}_1)$ are different. Conjugating by 
 $\left(x,y+\frac{R_{m,n}(x)}{m/n-1}\right)$, we can assume that $\rho(\mathrm{d}_{m,n})=\left(x,\frac{m}{n}y\right)$. From 
$\mathrm{d}_{m,n}\mathrm{d}_{a,b}=\mathrm{d}_{a,b}\mathrm{d}_{m,n}$ one gets for any $a$, $b$ in $\mathbb{Q}$
\begin{align*}
& \rho(\mathrm{d}_{a,b})=\left(\eta_{a,b}(x),\frac{a}{b}y\right), && \eta_{a,b}\in\mathrm{PGL}(2,\mathbb{C}).
\end{align*}
The homomorphism $\mathbb{Q}^{*}\to \PGL(2,\C)$ given by $a\mapsto \eta_{a,a}$ is injective, so up to conjugation by an element 
of~$\PGL(2,\C)$ we can assume that for any $a\in \mathbb{Q}\smallsetminus\{0,1\}$ there exists $\chi_{a,a}\in \C\smallsetminus\{0,1\}$ such 
that $\eta_{a,a}(x)=\chi_{a,a} x$. This implies the existence of $\chi_{a,b}\in \C^{*}$ for any $a$, $b$ in $(\mathbb{Q}^{*})^2$, 
such that $\eta_{a,b}(x)=\chi_{a,b} x$.

We now compute the image of $M=\left[\begin{array}{cc}0 &1\\-1&0\end{array}\right]$. Since $\rho(M)$ commutes with 
$\rho(\mathrm{d}_{2,2})=(\chi_{2,2}x,y)$ where $\chi_{2,2}\in \C^{*}$ is of infinite order, there exist $R\in \C(y)$ and $\nu\in 
\PGL(2,\C)$  such that $\rho(M)=(xR(y),\nu(y))$ (Lemma~\ref{Lem:centtransl}). For any $a\in \mathbb{Q}^{*}$, equality 
$M \mathrm{d}_{a,1}=\mathrm{d}_{1,a}M$ yields $$\big(\chi_{a,1}x\cdot R(ay),\nu(ay)\big)=\left(\chi_{1,a}x\cdot R(y),\frac{1}{a}\nu(y)
\right).$$ This implies  that  $R(y)=\alpha y^{-k}$ and $\nu(y)=\frac{\beta}{y}$, for some $\alpha$, $\beta\in \C^{*}$, $k\in \mathbb{Z}$,
 \emph{i.e.} $\rho(M)=\left(\alpha \frac{x}{y^k},\frac{\beta}{y}\right)$. We use now equality $(M\mathrm{t}_1)^3=\mathrm{id}$: the 
second component of $(\rho(M)\rho(\mathrm{t}_1))^3$ being $$\frac{\beta(y+\beta+1)}{(\beta+1)y+2\beta+1},$$ we find $\beta=-1$ and 
compute $(\rho(M)\rho(\mathrm{t}_1))^3=(x\alpha^3 (-1)^k,y)$ so $\alpha^3=(-1)^k$. Since $\rho(M)^2=(x\alpha^2(-1)^k,y)$ has order $2$,
 we have $-\alpha^2=(-1)^k$, thus $\alpha=-1$ and $k$ is odd. 

Writing $\chi(a)=\chi_{a,1}$ for any $a\in \mathbb{Q}$, the map $\mathbb{Q}^{*}\to \mathbb{C}^{*}$ given by $a\to \chi(a)$ is a homomorphism, and one gets $\rho(\mathrm{d}_{a,1})=(\chi(a)x,ay)$. The group $\mathrm{GL}(2,\mathbb{Q})$ is generated by 
the maps $\mathrm{d}_{a,1}, \mathrm{t}_a$ and $M$, so
$$\rho\left(\left[\begin{array}{cc}a & b\\ c & d\end{array}\right]\right)=\left(x\cdot \frac{\chi(ad-bc)}{(cy+d)^k},\frac{ay+b}{cy+d}
\right),$$
for any $\left[\begin{array}{cc}a & b\\ c & d\end{array}\right]\in \GL(2,\mathbb{Q})$. This yields an embedding if and only if the homomorphism from $\mathbb{Q}^*$ to $\mathbb{C}^*$ given by $a \mapsto \frac{\chi(a^2)}{a^k}$ is injective.

It remains to observe that $k$ can be chosen to be positive. Indeed, otherwise one conjugates by $\left(\frac{1}{x},y\right)$ and 
replaces $\chi$ with $\frac{1}{\chi}$ to replace $k$ with $-k$.
\end{proof}

One can see that $\GL(n,\mathbb{Q})$ does not embedd into $\Bir(\p^2)$ as soon as $n\ge 3$. Indeed, Theorem~\ref{thm:GL2Q} implies that 
the diagonal matrices are sent onto diagonal elements of $\PGL(3,\C)=\Aut(\p^2)$, which is impossible, by considering the involutions.
 One can also find another less obvious corollary:

\begin{cor}
Let $\rho\colon \mathrm{GL}(2,\mathbb{C})\to \mathrm{Bir}(\mathbb{P}^2)$ be an embedding of $\mathrm{GL}(2,\mathbb{C})$ into the Cremona 
group. There exist a positive odd integer $k$, a field homomorphism $\tau\colon \C\to \C$ and a group homomorphism 
$\chi\colon \C^{*}\to \C^{*}$ such that 
\begin{align*}
&\rho\left(\left[\begin{array}{cc}a & b\\ c & d\end{array}\right]\right)=\left(x\cdot \frac{\chi(ad-bc)}{(\tau(c)y+\tau(d))^k},
\frac{\tau(a)y+\tau(b)}{\tau(c)y+\tau(d)}\right), &&\forall \left[\begin{array}{cc}a & b\\ c & d\end{array}\right]\in \GL(2,\mathbb{C}).
\end{align*}
\end{cor}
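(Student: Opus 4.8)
The plan is to reduce the statement for $\GL(2,\C)$ to the already-proved Theorem~\ref{thm:GL2Q} for $\GL(2,\Q)$ by restricting an embedding $\rho\colon \GL(2,\C)\to \Bir(\p^2)$ to the subgroup $\GL(2,\Q)$. By Theorem~\ref{thm:GL2Q}, after conjugation we may assume that on $\GL(2,\Q)$ the map $\rho$ has the stated normal form with some odd $k>0$ and some character $\chi_0\colon \Q^*\to \C^*$. The task is then to show that this forces the same shape on all of $\GL(2,\C)$, with $\tau$ a field homomorphism extending the identity on $\Q$ and $\chi$ extending $\chi_0$.

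First I would exploit the same commutation arguments as in the proof of Theorem~\ref{thm:GL2Q}, but now over $\C$. Since $\rho(\mathrm{t}_1)=(x,y+1)$ and every $\rho(\mathrm{t}_q)$ for $q\in\C$ commutes with it, Lemma~\ref{Lem:centtransl} gives $\rho(\mathrm{t}_q)=(A_q(x),y+R_q(x))$ with $A_q\in\PGL(2,\C)$; the subgroup $\{\mathrm{t}_q\mid q\in\C\}\cong(\C,+)$ is torsion-free and divisible, and the relation $\rho(\mathrm{d}_{m,1})\rho(\mathrm{t}_1)\rho(\mathrm{d}_{m,1})^{-1}=\rho(\mathrm{t}_m)$ (already known for $m\in\Q$) pins down $A_q$. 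One shows $A_q=x$ and $R_q$ is a constant $\tau(q)$, obtaining an additive homomorphism $\tau\colon(\C,+)\to(\C,+)$ with $\tau|_\Q=\id$ and $\rho(\mathrm{t}_q)=(x,y+\tau(q))$. Next, from $\rho(\mathrm{d}_{a,1})\rho(\mathrm{t}_q)\rho(\mathrm{d}_{a,1})^{-1}=\rho(\mathrm{t}_{aq})$ and the fact (inherited from the $\Q$-case, extended by the centralizer computation) that $\rho(\mathrm{d}_{a,1})=(\chi(a)x,\eta_a(y))$ for a Möbius map $\eta_a$, one gets $\eta_a(y+\tau(q))=\eta_a(y)+\tau(aq)$ for all $q\in\C$; writing $\eta_a(y)=\alpha y+\beta$ this yields $\tau(aq)=\alpha\,\tau(q)$, and taking $q=1$ gives $\tau(a)=\alpha$, so $\tau(aq)=\tau(a)\tau(q)$. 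Hence $\tau$ is simultaneously additive and multiplicative, i.e.\ a field homomorphism $\C\to\C$ fixing $\Q$, and $\rho(\mathrm{d}_{a,1})=(\chi(a)x,\tau(a)y)$ for a group homomorphism $\chi\colon\C^*\to\C^*$.

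Then I would treat the antidiagonal generator exactly as in Theorem~\ref{thm:GL2Q}: $\rho(M)$ commutes with $\rho(\mathrm{d}_{2,2})=(\chi(2)x,y)$ where $\chi(2)$ need not have infinite order a priori, so I would instead pick $a\in\C^*$ with $\chi(a)$ of infinite order (such $a$ exists since $\chi$ is nontrivial — if $\chi$ had finite image, $\rho$ would fail to be injective on the center), apply Lemma~\ref{Lem:centtransl} to get $\rho(M)=(xR(y),\nu(y))$, and run the relations $M\mathrm{d}_{a,1}=\mathrm{d}_{1,a}M$, $(M\mathrm{t}_1)^3=\id$, $M^2=\mathrm{d}_{-1,-1}$ verbatim. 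This forces $R(y)=-y^{-k}$, $\nu(y)=-1/y$ with $k$ odd, now with $\tau$ inserted in the obvious place because $\rho(\mathrm{d}_{a,1})=(\chi(a)x,\tau(a)y)$. Since $\GL(2,\C)$ is generated by the $\mathrm{d}_{a,1}$, the $\mathrm{t}_q$ and $M$, composing these images gives the displayed formula, and as before $k$ can be made positive by conjugating by $(1/x,y)$ and replacing $\chi$ by $1/\chi$.

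The main obstacle — and the only genuinely new point compared with the rational case — is establishing that the additive map $\tau$ arising from $q\mapsto\rho(\mathrm{t}_q)$ is also multiplicative, hence a field homomorphism; everything else is a routine upgrade of the $\GL(2,\Q)$ computation with $\C$-coefficients. The key is that the conjugation action of the diagonal torus on the unipotent subgroup transports the additive structure to the multiplicative one, so the single homomorphism $\tau$ must respect both, and the injectivity of $\rho$ guarantees $\tau$ is nonzero hence injective. One should also double-check that no extra conjugation freedom is lost: the normalization $\rho(\mathrm{t}_1)=(x,y+1)$ and the diagonalization of $a\mapsto\eta_{a,a}$ were already used in Theorem~\ref{thm:GL2Q}, and the residual conjugations available (by maps of the form $(x,y)\mapsto(cx,y)$ and $(x,y)\mapsto(x/y^j\cdot\text{unit},y)$) are exactly what is needed to remove the remaining constants, just as in the proof of Theorem~\ref{thm:GL2Q}.
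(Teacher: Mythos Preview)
Your overall strategy --- restrict to $\GL(2,\Q)$, apply Theorem~\ref{thm:GL2Q}, then extend to $\GL(2,\C)$ via centralizer computations --- is exactly the paper's, but the order in which you run the computations creates a gap. You propose to determine $\rho(\mathrm{t}_q)$ for $q\in\C$ first, using only commutation with $\rho(\mathrm{t}_1)=(x,y+1)$. Lemma~\ref{Lem:centtransl} then gives merely $\rho(\mathrm{t}_q)=(A_q(x),y+R_q(x))$ with $A_q\in\PGL(2,\C)$ and $R_q\in\C(x)$, and neither the fact that $(\C,+)$ is torsion-free divisible nor the relation $\mathrm{d}_{m,1}\mathrm{t}_1\mathrm{d}_{m,1}^{-1}=\mathrm{t}_m$ for \emph{rational} $m$ forces $A_q=\id$ or $R_q$ constant when $q\notin\Q$: the homomorphism $q\mapsto A_q$ from $(\C,+)$ to $\PGL(2,\C)$ could a priori land nontrivially in a one-parameter subgroup. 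Your subsequent claim that $\rho(\mathrm{d}_{a,1})=(\chi(a)x,\eta_a(y))$ with $\eta_a$ a M\"obius map in $y$ is likewise not what Lemma~\ref{Lem:centraldiag} yields (it produces a second component of the form $y\cdot R(x^{k_0})$, not a M\"obius transformation in $y$), so the argument for multiplicativity of $\tau$ rests on an unestablished normal form.

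The paper sidesteps this by reversing the order. Since $\mathrm{d}_a$ commutes with \emph{all} rational diagonal matrices, whose images under $\rho$ are already known to be diagonal automorphisms $(\alpha x,\beta y)$, Lemma~\ref{Lem:centraldiag} (applied to enough of them) forces directly $\rho(\mathrm{d}_a)=(\chi(a)x,\tau(a)y)$ for group homomorphisms $\chi,\tau\colon\C^*\to\C^*$. Then the single relation $\mathrm{d}_a\mathrm{t}_1\mathrm{d}_a^{-1}=\mathrm{t}_a$ gives $\rho(\mathrm{t}_a)=(x,y+\tau(a))$ at once, and additivity of $q\mapsto\mathrm{t}_q$ upgrades $\tau$ to a field homomorphism $\C\to\C$. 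Finally, the paper uses that $\GL(2,\C)$ is generated by $\GL(2,\Q)$ together with the $\mathrm{d}_a$, so the formula follows; your entire re-derivation of $\rho(M)$ is unnecessary, since $M\in\GL(2,\Q)$ and its image is already pinned down by Theorem~\ref{thm:GL2Q}.
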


\begin{rem}
One sees that in the description above, $\rho$ is an embedding if and only if  the group homomorphism $\C^{*}\to \C^{*}$ given by 
$a\to \frac{\chi(a^2)}{\tau(a)^k}$ is injective. This happens for instance by taking $\chi(a)=\tau(a)^{\frac{k\pm 1}{2}}$, any positive 
odd integer $k$ and any field homomorphism $\tau\colon \C\to \C$.
\end{rem}

\begin{proof}
The map $\rho$ induces an embedding of $\mathrm{GL}(2,\mathbb{Q})$ into the Cremona group. According to Theorem~\ref{thm:GL2Q} one has a
description of $\rho_{\vert \mathrm{GL}(2,\mathbb{Q})}$. Up to conjugacy, there exists an odd positive integer $k$ and an 
homomorphism $\widetilde{\chi}\colon \mathbb{Q}^{*}\to \C^{*}$ such that 
\begin{align*}
&\rho\left(\left[\begin{array}{cc}a & b\\ c & d\end{array}\right]\right)=\left(x\cdot \frac{\widetilde{\chi}(ad-bc)}{(cy+d)^k},
\frac{ay+b}{cy+d}\right), &&\forall \left[\begin{array}{cc}a & b\\ c & d\end{array}\right]\in \GL(2,\mathbb{Q}).
\end{align*}
Let us set 
\begin{align*}
& \mathrm{t}_a=\left[\begin{array}{cc}1 & a\\ 0 & 1\end{array}\right] &&  \&&&\mathrm{d}_b=\left[\begin{array}{cc}b & 0\\ 0 & 1
\end{array}\right], && a\in\mathbb{C},\, b\in \mathbb{C}^*.
\end{align*}

For any $a\in \mathbb{C}^{*}$, the matrix $\mathrm{d}_a$ commutes with all diagonal matrices with entries in $\mathbb{Q}$; this implies, 
with the description above, that  $$\rho(\mathrm{d}_a)=(\chi(a) x,\tau(a) y)$$ for some $\chi(a)$, $\tau(a)$ in $\mathbb{C}^*$ 
(Lemma \ref{Lem:centraldiag}). This yields two group homomorphisms $\chi$, $\tau\colon \C^{*}\to \mathbb{C}^{*}$. Observe that~$\chi$ 
is an extension of~$\widetilde{\chi}$, \emph{i.e.\ }$\chi(a)=\widetilde{\chi}(a)$ for any $a\in \mathbb{Q}$.

The equality $\mathrm{d}_a\mathrm{t}_1\mathrm{d}_{a^{-1}}=\mathrm{t}_a$ implies that
\begin{align*}
&\rho(\mathrm{t}_a)=\left(x,y+{\tau(a)}\right), &&\forall a\in \C^{*}.
\end{align*}
In particular, $\tau$ extends to an (injective) field homomorphism $\C\to \C$. The group $\GL(2,\mathbb{C})$ being generated by 
$\GL(2,\mathbb{Q})$ and $\big\{\mathrm{d}_a\, \big\vert\, a\in \mathbb{C}^{*}\big\}$, one has 
\begin{align*}
&\rho\left(\left[\begin{array}{cc}a & b\\ c & d\end{array}\right]\right)=\left(x\cdot \frac{\chi(ad-bc)}{(\tau(c)y+\tau(d))^k},
\frac{\tau(a)y+\tau(b)}{\tau(c)y+\tau(d)}\right),&& \forall \left[\begin{array}{cc}a & b\\ c & d\end{array}\right]\in \GL(2,\mathbb{C}). 
\end{align*}
The map $\rho$ is injective if and only if $\chi(a^2)\not=\tau(a^k)$ for any $a\in \mathbb{C}^*\smallsetminus\{1\}$.
\end{proof}

\vspace{8mm}

\bibliographystyle{plain}
\bibliography{biblio}
\nocite{}

\end{document}